\newcommand{\dpar}[2]{\dfrac{\partial #1}{\partial #2}}
 \newcommand{\R}{\mathbb R}
 \newcommand{\Z}{\mathbb Z}
\renewcommand{\P}{\mathbb P}
\newcommand{\QQ}{\mathcal Q}
\newcommand{\V}{\mathcal{V}}
\newcommand{\E}{\mathcal{E}}
\newcommand{\bbf}{{\mathbf {f}}}
\newcommand{\bn}{{\mathbf {n}}}
\newcommand{\bm}{\mathbf{m}}
\newcommand{\bu}{\mathbf{u}}
\newcommand{\bV}{\mathbf{V}}
\newcommand{\hbbf}{\hat{\mathbf{f}}}
\newcommand{\bx}{\mathbf{x}}
\newcommand{\norm}[1]{\big |\big | #1\big |\big |}
\newtheorem{thm}{Theorem}[section]
\newtheorem{prop}[thm]{Proposition}
\newcommand{\remi}[1]{{#1}}
\begin{document}
	\title{Staggered  schemes for compressible flow: a general construction}
\date{}
\author{R. Abgrall\thanks{ Institute of Mathematics, University of Z\"urich,  Z\"urich, Switzerland. \email{remi.abgrall@math.uzh.ch}}
}
\maketitle
\begin{abstract}
This paper is focused on the approximation of the Euler equations of compressible fluid dynamics on a staggered mesh. With this aim, the flow parameters are described by the velocity, the density and the internal energy. The thermodynamic quantities are described on the elements of the mesh, and thus the approximation is only in $L^2$, while the kinematic quantities are globally continuous. The method is general in the sense that the thermodynamic and kinetic parameters are described by an arbitrary degree of polynomials. In practice, the difference between the degrees of the kinematic parameters and the thermodynamic ones {is set} to $1$. The integration in time is done using the forward Euler method but can be extended straightforwardly to higher-order methods. In order to guarantee that the limit solution will be a weak solution of the problem, we introduce a general correction method in the spirit of the Lagrangian staggered method described in \cite{Svetlana,MR4059382, MR3023731}, and we prove a Lax Wendroff theorem. The proof is valid for multidimensional versions of the scheme, even though most of the numerical illustrations in this work, on classical benchmark problems, are one-dimensional because we have easy access to the exact solution for comparison. We conclude by explaining that the method is general and can be used in different settings, for example, Finite Volume, or discontinuous Galerkin method, not just the specific one presented in this paper.
\end{abstract}
\section{Introduction}
The Euler equations of fluid dynamics are, formulated in their conservative version, 
\begin{equation}\begin{split}
\dfrac{{\partial \rho}}{{\partial t}}&+\mathrm{div}(\rho \mathbf{u}) =\mathbf{0},  \\
\dfrac{{\partial \rho \mathbf{u}}}{{\partial t}}&+\mathrm{div}(\rho \mathbf{u}\otimes \mathbf{u} + p\mathbf{I}) =\mathbf{0},  \\
\dfrac{{\partial E}}{{\partial t}}&+\mathrm{div}((E+p) \mathbf{u}) =\mathbf{0}.
\end{split}
\label{EulerCons}
\end{equation}%
As usual, $\rho\geq 0$ is the density, $\bu$ is the velocity vector, $E=e+\tfrac{1}{2}\rho \bu^2$ is the total energy, $e\geq 0$ is the internal energy and $p$ is the pressure. The system is closed by an equation of state for $p=p(\rho, e)$. The simplest one is that of a calorically perfect gas $$p=\frac{e}{\gamma-1},$$ where the ratio of specific heats $\gamma$ is constant.  

When the solution is smooth, the system (\ref{EulerCons}) can be equivalently written in nonconservative form as
\begin{equation}\begin{split}
\dfrac{{\partial \rho}}{{\partial t}}&+\mathrm{div}(\rho \mathbf{u}) =\mathbf{0},  \\
\dfrac{{\partial  \mathbf{u}}}{{\partial t}}&+(\mathbf{u} \cdot \nabla)\mathbf{u} + \dfrac{\nabla p}{\rho} =\mathbf{0},  \\
\dfrac{{\partial e}}{{\partial t}}&+\mathbf{u} \cdot \nabla\mathbf{e} + (e+p)\mathrm{div}\mathbf{u} =\mathbf{0}.  
\end{split}
\label{EulerNonCons}
\end{equation}%
When the solution is not smooth, the form \eqref{EulerNonCons} is meaningless because the differential operators are no longer defined. This is why the form \eqref{EulerCons} is preferred, in particular in its weak form, see \cite{raviart1}. This fact has a very strong implication for the design of numerical schemes applied to \eqref{EulerCons}: the Lax-Wendroff theorem implies and guarantees that a suitable numerical approximation should be written in terms of flux. 

However, the form \eqref{EulerNonCons} is better suited for engineering purposes, since one has direct access to the velocity and the internal energy. Hence a rather natural question is how to discretise the Euler equations directly from \eqref{EulerNonCons}, and still have convergence to the correct weak solutions, at least formally.
{\color{black}In addition to this theoretical question, there are other reasons to use the \eqref{EulerNonCons} system, and we list a few of them:
\begin{itemize}
\item In the Lagrangian hydrodynamics community (i.e. the US National Laboratories, AWE in the UK, CEA in France and their Russian and Chinese counterparts), it is very common to describe, for certain applications, the equations of compressible fluid dynamics using volume, velocity and specific internal energy. This is, for example, what is done with the Wilkins scheme, see \cite{wilkins}, and even in the finite element version of this scheme, see \cite{MR3023731}.  Variables are represented either by cell or by point values, depending on whether they are intensive (such as velocity) or extensive (such as mass, volume and energy): this implies that thermodynamic variables are in $L^2$ only, while velocity is globally continuous (except when we need to introduce slip surfaces). 

We think it is interesting to understand how the mechanism that makes these schemes conservative can be translated into the Eulerian framework. The technique we develop in this article can be seen as the Eulerian counterpart of what was done in \cite{Svetlana} and then \cite{MR4059382} in the Lagrangian framework.

\item In many multi-physics applications, it is not obvious whether the fully conservative formulation is the most appropriate. MHD equations are a good example. In the finite-volume community, it is customary to describe flow with density, momentum and total energy, i.e. the sum of kinetic energy, thermodynamic energy and magnetic energy. The magnetic field evolution equation is described in conservation form (using Ohm's law and Faraday's equation), and we have the constraint $\text{div }B=0$.   However, the natural way to write the magnetic field equation is not using the divergence operator but the curl operator, and the consequence is the preservation of the divergence involution. Merging the magnetic field and the mechanical and thermodynamic energies may be considered somewhat artificial, since we also have an evolution relation for the magnetic field equation. Hence,  it may be considered interesting to separate the thermodynamics from the magnetic field. See \cite{tcheque} for an example of this type of approach in the Lagrangian framework. In addition, one way of preserving the structure of the Faraday equation is to use a staggered mesh. This is not necessarily done as in the present paper, but respecting local conservation of total energy may require the same kind of algebraic manipulations as here, see \cite{AbgrallDumbser}.
\item When considering a mixture of gases, the most natural variable describing fluid energy is not total energy but internal energy, or even pressure. Consequently, the use of a formulation based on a non-conservative form of the system may offer certain advantages.
\item One numerical strategy for simulating incompressible flows is to use a staggered mesh, for reasons of stability. It is well known in the finite volume community that when data are collocated and the Mach number tends towards 0, the behavior of the numerical scheme degrades considerably. This problem has been studied in numerous articles, \cite{herve3,herve2,herve1} for example, and the references therein. Several strategies exist, such as preconditioning, but not only. One might expect staggering, even for the  compressible case, to be a good strategy when the Mach number tends towards $0$. This was done by Herbin et al. in \cite{MR4232217} and by Bijl et al. in \cite{zbMATH01282684}. The scheme must be implicit. Here we are not interested in the low  Mach effect, but in conservation issues, and the extension of our work  to small  Mach numbers could be a possibility.
\end{itemize}
}
\medskip
{\color{black}We will  be using the expression "locally conservative" for a scheme. By this we mean that for finite volume schemes, discontinuous Galerkin schemes, finite differences and even those using continuous finite elements (see \cite{MR4090481} for a \emph{explicit} construction), each degree of freedom can be associated with a (control) volume.  We say that a scheme is locally conservative if, for any sub-domain obtained by gathering such volumes, the update of the conservative variable is obtained by the contribution on the boundary of this sub-domain.
}
\medskip

One obvious way to write a scheme on the primitive variables is to start  from a locally conservative approximation of \eqref{EulerCons}, and by simple algebraic manipulations which amount to multiplying the numerical scheme by approximations of 
\begin{equation}
\begin{pmatrix}1&0&0 \\
\mathbf{u}&\rho&0\\
\dfrac{\mathbf{u}^2}{2}& \rho \mathbf{u} & 1
\end{pmatrix},
\label{MatrixConsToNoncons}
\end{equation}%
we can obtain a scheme directly working on the primitive variables. This ``new'' scheme is \emph{equivalent} to the original one.

This is not exactly the question we want to address here. We are interested in designing locally conservative approximations of \eqref{EulerNonCons} for which the thermodynamic variables are approximated in $L^2$ while the velocity is globally continuous. This can be seen as an Eulerian version of the Lagrangian schemes designed in \cite{Svetlana,MR4059382} or \cite{MR3023731} and the related works by these authors. A similar question has been addressed by Herbin and co-authors, see, for example, \cite{MR3864518,MR4122491,MR4232220} in the Finite Volume context. In these references, the authors describe a class of numerical schemes where the thermodynamic variables and the velocity are piecewise constant but logically described on a staggered mesh. They show the convergence towards the weak solution. The scheme can also be partially implicit, so that in the low Mach number limit the scheme ``degenerates'' to a Mac-type scheme, see \cite{MR4232217}. \color{black}{Their schemes are second-order accurate in time and space.}

{\color{black}
In this article, we describe a different technique that allows \textit{a priori} to achieve an arbitrary level of precision, both in time and space. This technique is not particularly designed for any specific class of scheme. The main restriction seems to be that the time scheme must be based on a sequence of Euler steps. Examples are the Runge Kutta SSP schemes, or the Defect Correction (DeC) methods in the \cite{zbMATH06826581} version. We have chosen to illustrate the technique on the example of residual distribution (RD) schemes where evolution over time is done by DeC, see \cite{zbMATH06826581}. This choice is also motivated by the fact that local conservation recovery is simpler for RD schemes (see \cite{MR4090481}). Therefore, before describing this method, we briefly review the class of residual distribution schemes that will be the main tool we use, and sketch how dG (and therefore finite-volume) schemes can be reformulated in this framework. We then describe the scheme and explain why it is locally conservative. Finally, we show a variant of the Lax-Wendroff theorem adapted to our framework. Finally, we show how to adapt the method to finite volume schemes and discontinuous Galerkin methods. Numerical examples illustrate the soundness of the approach.
}

\section{A first-order nonconservative approach}
We have in mind a numerical approximations where the variables are piecewise polynomial in simplex. We also assume that the velocity is globally continuous, in contrast to discontinuous Galerkin (dG)--like approximations. This constraint is motivated by the \emph{choice} that we want to extend the technique of \cite{Svetlana}, where a Petrov Galerkin technique is used, inspired from \cite{zbMATH06826581} and the reference therein. If nothing special is done, we need to invert a mass matrix. This can be cumbersome, and even impossible if we want to extend the techniques of \cite{AbgrallRoe} because the equivalent of the mass matrix changes at every time step. This is why a particular time stepping should be preferred, for example, the Deferred Correction (DeC) approach, see Appendix \ref{appendix_dec}. It relies on series of Euler forward type of discretisation. 

This is indeed \textbf{\emph{the}} essential point: if one prefers to forget the globally continuous methods, rely on a dG--like approach, and use a Strong Stability Preserving (SSP) Runge-Kutta approach, one can extend our correction technique and build schemes that converge to a weak solution of the problem, starting from \eqref{EulerNonCons}.  {This will be explained in section \ref{sec:dG}.} Since the novelty of the approach lies in the correction technique, we will focus for simplicity on a single Euler forward step in time.

We consider a hyperbolic system in the form
\begin{equation}\label{Hyperbolic}
\dfrac{\partial U}{\partial t} + \mathrm{L}(U) = 0
\end{equation} 
on a domain $\Omega\subset \R^d$, $d=1,2,3$. For solving \eqref{EulerCons} or \eqref{EulerNonCons} we define
\begin{equation}\label{L:cons}L(U)=: \begin{pmatrix}\mathrm{div}(\rho \mathbf{u}) \\ \mathrm{div}(\rho \mathbf{u}\otimes \mathbf{u} + p\mathbf{I}) \\ \mathrm{div}((E+p) \mathbf{u})\end{pmatrix}\end{equation}
for the conservative form and
\begin{equation}\label{L:ncons}L(U)=:\begin{pmatrix}\mathrm{div}(\rho \mathbf{u})\\   (\mathbf{u} \cdot \nabla)\mathbf{u} + \dfrac{\nabla p}{\rho}\\ \mathbf{u} \cdot \nabla\mathbf{e} + (e+p)\mathrm{div}\mathbf{u}\end{pmatrix} \end{equation}
for the non--conservative form. In what follows, we will describe the procedure for solving the equations in two steps. First, we consider the case of a scalar problem, and then we look at \eqref{EulerCons} or \eqref{EulerNonCons}. The reason is that in \eqref{EulerNonCons} not all of the variables play the same role, contrarily to \eqref{EulerCons}, and it is easier to start with a system with one variable. For now, we will proceed by forgetting the question of local conservation.

\subsection{Scalar case}\label{2.2}
\subsubsection{Trial space}
We consider a triangulation of $\Omega$ made of non-overlapping simplices that are generically denoted by $K$. We assume that the triangulation is conformal, and define
$${V^h(\Omega)=\{ v\in L^2(\Omega) \text{ such that for any }K, v_{\vert K}\in \P^k(\R^d)\}\subset L^2(\Omega)},$$ where as usual, $\P^k(\R^d)$ is the set of polynomials in $\R^d$ of degree less or equal to $k$. We also define
$${W^h(\Omega)=V^h(\Omega)\cap C^0(\Omega)}.$$
 In each element $K$, a polynomial is defined by a set of degrees of freedom, for example, the Lagrange points. We denote by $\sigma$ a generic degree of freedom. Here, for reasons that will be more clear later on, we expand the polynomials in terms of B\'ezier polynomials. 
\begin{itemize}
\item One dimensional elements:
In the element $K=[x_i,x_{i+1}]$, we consider the barycentric coordinates
$$\lambda_1(x)=\dfrac{x_{i+1}-x}{x_{i+1}-x_i}, \quad \lambda_2(x)=\dfrac{x-x_i}{x_{i+1}-x_i}=1-\lambda_1.$$
If $\sigma\in K$, the restriction of $B_\sigma$ is defined in the element as follows: if $\sigma\not\in K$, then the B\'ezier form vanishes.
We describe the two families of B\'ezier forms we will need:
\begin{itemize}
\item Linear: The degrees of freedom are the vertices, so 
$$\varphi_i^{{n+1}}=\lambda_1, \quad \varphi_{i+1}^{{n+1}}=\lambda_2\; \text{ and }\; \sigma=x_i \text{ or } x_{i+1} \text{ here.}$$
\item Quadratic: The degrees of freedom $\sigma$ are identified with the vertices $i$, and the mid-points $i+\tfrac{1}{2}$ 
$$\varphi_\sigma^{(2)}(x)=\left \{\begin{array}{ll}\lambda_1^2, & \text{ if }\sigma=x_i,\\
2\lambda_1\lambda_2, & \text{ if } \sigma=x_{i+\nicefrac{1}{2}},\\
\lambda_2^2, & \text{ if } \sigma=x_{i+1}.
\end{array}
\right .$$
\end{itemize}

\item Multidimensional elements: We only describe the 2D cases, with triangles, but similar things are obtained for quadrangles, or 3D simplices.
A triangle is made of three vertices denoted by $1$, $2$ and $3$. The barycentric coordinates with respect to the vertices $1$, $2$, $3$ are denoted by $\Lambda_1$, $\Lambda_2$ and $\Lambda_3$.
\begin{itemize}
\item Linear: The degrees of freedom are the vertices and $\varphi_{\sigma_i}=\Lambda_i$ and $i=1,2,3$.
\item Quadratic: The degrees of freedom are the three vertices $\sigma_1$, $\sigma_2$ and $\sigma_3$ as well as the midpoints of the edges: 
$$\sigma_4=\frac{\sigma_1+\sigma_2}{2}, \quad \sigma_5=\frac{\sigma_2+\sigma_3}{2}, \quad\sigma_6=\frac{\sigma_3+\sigma_1}{2}.$$
The B\'ezier polynomials are:
\begin{equation*}
\begin{split}
&\varphi_{\sigma_i}=\Lambda_i^2\;  \text{ for }\;i=1, 2,3, \\
&\varphi_{\sigma_4}=2\Lambda_1\Lambda_2, \quad 
\varphi_{\sigma_5}=2\Lambda_3\Lambda_2,  \quad \varphi_{\sigma_6}=2\Lambda_1\Lambda_3.
\end{split}
\end{equation*}
\end{itemize}
\end{itemize}
Then, considering $u\in V_h(\Omega)$ or $u\in W_h(\Omega)$, for any $K$, we expand $u_{\vert K}$ as
$$u_{\vert K}=\sum_{\sigma\in K} u_\sigma \varphi_\sigma^K,$$ where $\varphi_\sigma^K$ is any of the linear, quadratic (or higher-order) functions defined above. If $u\in V_h(\Omega)$ then we have the expansion
$$u=\sum_K\sum_{\sigma\in K}u_\sigma^K \varphi_\sigma^K$$
and if $u\in W_h(\Omega)$, we can expand $u$ as
$$u=\sum_\sigma u_\sigma\varphi_\sigma.$$ With some abuse of notations, we will use the second expansion throughout this paper, depending if we see $\varphi_\sigma$ per element or more globally.
\subsubsection{Test space}
As we mentioned earlier, we rely on a Petrov-Galerkin approach. This means that the test functions will belong to a finite dimensional subspace $X_h(\Omega)$ of $L^2(\Omega)$ that can also be described by the degrees of freedom $\sigma$: we can identify functions of $X_h(\Omega)$ that are indexed by the $\sigma$ and span this space. We denote them by $\Xi_\sigma$. For example, in the SUPG method, we define $\Xi_\sigma$ in each $K$ by: for $\bx\in K$, 
$$\Xi_\sigma(\bx)=\varphi_\sigma(\bx)+h_K\big (\nabla_U L(U)\tau_K\big )\cdot \nabla\varphi_\sigma(\bx).$$ Here $h_K$ is the diameter of $K$ and $\tau_K$ is a positive matrix. In \cite{AbgrallRoe,energy,larat,santis1,santis2}, examples are given, where $\Xi_\sigma$ depends on the solution, in order to get $L^\infty$ stability. In all the examples we are considering, the support of $\Xi_\sigma$ is that of $\varphi_\sigma$.
\subsubsection{Description of the time discretisation}
We start by integrating (\ref{Hyperbolic}) which gives
\begin{equation*}
\int_\Omega\int_{t^{n}}^{t^{n + 1}}\Xi_\sigma\left(\dfrac{\partial U}{\partial t} + \mathrm{L}(U)\right) \; dt \; d\bx= 0.
\end{equation*}
By applying the forward Euler method per simplex, we obtain
\begin{equation}
\label{forw_Euler}
\int_\Omega\int_{t^{n}}^{t^{n + 1}}\Xi_\sigma\left(\dfrac{\partial U}{\partial t} + \mathrm{L}(U)\right) \; dt \; d\bx = \int_\Omega \Xi_\sigma (U^{n+1}-U^n)\; d\bx + \Delta t \int_\Omega \Xi_\sigma \mathrm{L}(U^n) \; d\bx =0.
\end{equation}
{\color{black}The definition of $\int_K \Xi_\sigma\mathrm{L}(u) d\bx$ is somewhat formal and we replace it by some approximation $\Phi_\sigma^K(U)$ which will be defined later. {\color{black}The only constraint is that we have the relation
\begin{equation}\label{pseudo:conservation}
\sum\limits_{\sigma\in K} \Phi_\sigma^K(U)=\Phi^K(U),
\end{equation}
where the precise definition of $\Phi^K(U)$ depends on whether we are dealing with the problem \eqref{Hyperbolic} with the $L$ operator in conservation form $L(U)=\text{div }\bbf(U)$ as in \eqref{L:cons} or 
or in non conservation form $L(U)=\mathbf{a}(U)\cdot \nabla U$ as in \eqref{L:ncons}.}
More specificaly, 
\begin{itemize}
\item If $L$ is in conservation form,  we  set
$$\Phi^K(U):=\int_{\partial K} \hbbf_\bn\; d\gamma,$$ where $\hbbf_\bn$ is a consistant approximation of the flux $\bbf$ in the direction $\bn$ (normal to $\partial K$),
\item If $L$ is in non conservation form, we set 
$$\Phi^K(U):=\int_K \mathbf{a}(U^h)\cdot \nabla U^h\; d\bx$$  where  a quadrature formula is employed.
In fact, and in that case, the situation is slightly more complicated, because written as such, there might seem there is no coupling between elements. Since this is a  case by case procedure, we give an example in section \ref{numerics}.
%
\end{itemize}
}
We replace the temporal terms by
$$\int_\Omega \varphi_\sigma (U^{n+1}-U^n) \; d\bx$$ and then "lump" the mass matrix, set $\int_K\varphi_\sigma dx=C_K$ (it does not depend on $\sigma$) and obtain
\begin{equation}
\label{semi_forw_Euler}
|C_\sigma| ( U^{n+1}-U^n ) + \Delta t \sum_{K,\sigma \in K} \Phi_\sigma^K(U^n) = 0.
\end{equation}
We note that this is the reason why we use a B\'ezier approximation since we are sure that the lumped mass is non zero because it holds
$$C_\sigma=\int_\Omega \varphi_\sigma\; d\bx>0.$$

{\color{black} This scheme corresponds to the $\mathcal{L}^1$ operator of the DeC procedure described in appendix \ref{appendix_dec}. It will be high order in time and space, provided some conditions described in appendix \ref{appendix_dec} are fullfiled.  We stick to this, to avoid  useless complications, and also because its form is that of an Euler forward method. Hence our discussion becomes valid for any algorithm that can be put in the  form \eqref{semi_forw_Euler} with  $\Phi_\sigma^K$ satisfying \eqref{pseudo:conservation}.

Let us give an other example: the discontinuous Galerkin method in the conservative setting. Using basis functions $\{\varphi^\sigma\}$ that are now see as polynomial in each $K$ but only in $L^2$, we have for any $\sigma\in K$
$$\int_K\varphi_\sigma \dpar{U}{t}\; d\bx -\int_K \nabla \varphi_\sigma \cdot\bbf(U)\; d\bx+\int_{\partial K} \varphi_\sigma\hbbf_\bn \;d\gamma=0$$
and we set
$$\Phi_\sigma^K :=-\int_K \nabla \varphi_\sigma \cdot\bbf(U)+\int_{\partial K} \varphi_\sigma\hbbf_\bn \; d\gamma.$$
Since in $K$, $\sum\limits_{\sigma\in K}\varphi_\sigma=1$, we have \eqref{pseudo:conservation}. The non conservative setting works \emph{formally} similarly, provided a case by case strategy is again adopted.
}

\subsection{Case of system \eqref{EulerNonCons}}
We describe the residuals and develop the method as before for simplicity for the forward Euler method in time. But as mentioned before, it can be extended to higher orders in a straightforward way.

\medskip

We assume that the computational domain $\Omega$ is covered by non-overlapping simplices $\{K_j\}_{j\in \mathcal{T} }.$ The velocity field $\mathbf{u}$ belongs to a kinematic space $\mathcal{V}$ of finite dimension; it has a basis denoted by $\{\varphi_{\sigma_{\mathcal{V}}}\}_{\sigma_{\mathcal{V}}\in D_{\mathcal{V}}}$, where $D_{\mathcal{V}}$ is the set of kinematic degrees of freedom with the total degrees of freedom given by $\# D_{\mathcal{V}} = N_{\mathcal{V}}$. The thermodynamic quantities such as the internal energy, the density and the pressure belong to a thermodynamic space $\mathcal{E}$; this space is also finite dimensional and its basis is $\{\varphi_{\sigma_{\mathcal{E}}}\}_{\sigma_{\mathcal{E} \in D_{\mathcal{E}}}}$. The set $D_{\mathcal{E}}$ is the set of thermodynamic degrees of freedom with the total degrees of freedom $\# D_{\mathcal{E}} = N_{\mathcal{E}}$. The kinematic space $\mathcal{V}$ is formed by the quadratic (or linear) Bernstein elements, while the thermodynamic space $\mathcal{E}$ has a piecewise-linear (or piece-wise constat) basis. The velocity field is approximated by 
$$\mathbf{u}(\mathbf{x}, t) = \sum_{\sigma_{\mathcal{V}}\in D_{\mathcal{V}}} \mathbf{u}_{\sigma_{\mathcal{V}}}(t)\varphi_{\sigma_{\mathcal{V}}}(\mathbf{x}),$$
where the $\varphi_{\sigma_{\mathcal{V}}}$ are the linear/quadratic (or linear) B\'ezier polynomials, 
and the density, the pressure and the internal energy, are given by
 \begin{equation*}
 \begin{split}
  \rho(\mathbf{x}, t) &= \sum_{\sigma_{\mathcal{E}}\in D_{\mathcal{E}}} \rho_{\sigma_{\mathcal{E}}}(t)\varphi_{\sigma_{\mathcal{E}}}(\mathbf{x}), \quad  p(\mathbf{x}, t) = \sum_{\sigma_{\mathcal{E}}\in D_{\mathcal{E}}} p_{\sigma_{\mathcal{E}}}(t)\varphi_{\sigma_{\mathcal{E}}}(\mathbf{x}),\\ e(\mathbf{x}, t) &= \sum_{\sigma_{\mathcal{E}}\in D_{\mathcal{E}}} e_{\sigma_{\mathcal{E}}}(t)\varphi_{\sigma_{\mathcal{E}}}(\mathbf{x}),
 \end{split}
 \end{equation*}
where the $\varphi_{\sigma_{\mathcal{E}}}$ are the per elements piecewise constant/linear functions. Note that the degrees of freedom for the velocity are assumed to be globally continuous, so in {$\big (W_h(\Omega)\big )^{d}$}, while the thermodynamic ones are discontinuous across the boundary of the elements, so in {$\big (V_h(\Omega)\big )^2$.}
 
We can rewrite the Euler equations (\ref{EulerNonCons}) in the following way $$\dfrac{\partial U}{\partial t} + \mathbf{a}(U)\cdot \nabla U = 0.$$
 
The only thing to do is to describe how the method of the previous section adapts to this case, and this amounts to describing the general structure residuals, that is how \eqref{pseudo:conservation} is written. Since the velocity is globally continuous, 
 we write 
 $${\Phi^{K, \bu}=\int_K \big ( \bu\otimes \bu+\dfrac{\nabla p}{\rho}\big )\; d\bx},$$
where {$\bu\in \big (W_h(\Omega)\big )^d$ }and {$p,\rho\in V_h(\Omega)$}. Since we are on $K$, these are simply polynomials, and the integration is carried out by numerical quadrature.

For the density, the evolution equation  is in conservation form and we use a numerical flux $\hbbf$:
 $${\Phi^{K, \rho}=\int_{\partial K}\hbbf_\bn\; d\gamma.}$$
Here, any consistent numerical flow can be used a priori. Of course, the stability of the method depends on this choice, but the conservation properties of the method do not.
 
Last, for the internal energy, we write
 $${\Phi^{K, E}=\int_K \big ( \bu\cdot \nabla e+ (e+p)\text{ div } \bu \big ) \; d\bx}$$ and again we use a quadrature formula. 
 
 In the numerical section, we will describe the residuals that we use.

\section{A discussion on conservation}\label{discussion}
\subsection{A set of sufficient conditions to achieve convergence to a weak solution}
Again, to simplify the notations, we focus on the first-order case, but the extension to the more general case is straightforward. In the appendix \ref{LxWthm}, we show a Lax Wendroff theorem for this type of discretisation. What we do here is to show how to go from the system in non--conservative form to the one in conservative form. 

Nothing has to be done for the density since it is already in conservative form and the standard proof \cite{raviart1} applies. There is no need to repeat it here since the proof we give for the momentum and the total energy, modulo some complications, is essentially the same.

Let us first look at the momentum. Considering a test function $\psi\in C^1_0(\R^d\times \R)$, we denote with $\psi^n_K$ the value of $\psi$ at time $t_n$ at the centroid of $K$, and consider the following approximation of $\psi$ that we still denote by $\psi$:
$$\psi(\bx,t)=\sum_K \psi_K^n 1_K\quad \text{ for }\quad t\in [t_n, t_{n+1}[.$$
Then we consider
\begin{equation}
\label{momentum:1}
\begin{split}
\int_{\R^d}\psi(\bx,t) \; &\big ( \rho^{{n+1}}\bu^{{n+1}}-\rho^{{n}}\bu^{{n}}\big ) \; d\bx=\sum_K\psi_K^n \int_K\big ( \rho^{{n+1}}\bu^{{n+1}}-\rho^{{n}}\bu^{{n}}\big ) \; d\bx\\&=\sum_K \psi_K^n\bigg [ \int_K \rho^{{n+1}} \big ( \bu^{{n+1}}-\bu^{{n}}\big ) \; d\bx+\int_K \bu^{{n}} \big ( \rho^{{n+1}}-\rho^{{n}}\big ) \; d\bx\bigg ].
\end{split}
\end{equation}
Introducing $\Delta \bu_{\sigma_\mathcal{V}}=:\bu_{\sigma_\mathcal{V}}^{{n+1}}-\bu_{\sigma_\mathcal{V}}^{{n}}$ and
$\Delta \rho_{\sigma_{\mathcal{E}}}:=\rho_{\sigma_{\mathcal{E}}}^{{n+1}}-\rho_{\sigma_{\mathcal{E}}}^{{n}}$, we can write
$$\int_K \rho^{{n+1}} \big ( \bu^{{n+1}}-\bu^{{n}}\big )\; d\bx=\sum_{\sigma_{\mathcal{V}}\in K} \Delta \bu_{\sigma_\mathcal{V}}\int_K \rho^{{n+1}}\varphi_{\sigma_\mathcal{V}}\; d\bx$$
and 
$$\int_K \bu^{{n}} \big ( \rho^{{n+1}}-\rho^{{n}}\big )\; d\bx= \sum_{\sigma_{\mathcal{E}}\in K}\Delta \rho_{\sigma_{\mathcal{E}}}\int_K \bu^{{n}}\varphi_{\sigma_\mathcal{E}}\; d\bx.$$

Hence, \eqref{momentum:1} can be rewritten as:
\begin{equation}
\label{momentum:2}
\begin{split}
\int_{\R^d}&\psi(\bx,t) \; \big ( \rho^{{n+1}}\bu^{{n+1}}-\rho^{{n}}\bu^{{n}}\big ) \; d\bx\\
& =\sum_K\psi_K^n\bigg [\sum_{\sigma_{\mathcal{V}}\in K} \Delta \bu_{\sigma_\mathcal{V}}\int_K \rho^{{n+1}}\varphi_{\sigma_\mathcal{V}}\; d\bx+ \sum_{\sigma_{\mathcal{E}}\in K}\Delta \rho_{\sigma_{\mathcal{E}}}\int_K \bu^{{n}}\varphi_{\sigma_\mathcal{E}}\; d\bx
\bigg ]\\
&\quad=\sum_K\psi_K^n\bigg [\sum_{\sigma_{\mathcal{V}}\in K} \omega^{\rho,n+1,K}_{{\sigma_{\mathcal{V}}}} 
|C_{\sigma_{\mathcal{V}}}|\Delta \bu_{\sigma_\mathcal{V}}
 + \sum_{\sigma_{\mathcal{E}}\in K}\omega^{\bu,n,K}_{\sigma_\mathcal{E}}
|C_{\sigma_\mathcal{E}}|\Delta \rho_{\sigma_{\mathcal{E}}}\bigg ]\\
&=\sum_K\Big [\sum_{\sigma_\V\in K} \psi_{\sigma_\V}\omega_{\sigma_\V}^{\rho, n+1,K}|C_{\sigma_{\mathcal{V}}}|
\Delta \bu_{\sigma_\mathcal{V}} \Big ]\\
&\qquad +
\sum_K\sum_{\sigma_\V} \big (\psi_K^n-\psi_{\sigma_\V}\big )\vert C_{\sigma_{\mathcal{V}}}\vert \; \omega_{\sigma_\V}^{\rho, n+1}
\Delta \bu_{\sigma_\mathcal{V}}+\sum_K \psi^n_K \Big [\sum_{\sigma_{\mathcal{E}}\in K}\omega^{\bu,n,K}_{\sigma_\mathcal{E}}
|C_{\sigma_\mathcal{E}}|\Delta \rho_{\sigma_{\mathcal{E}}}\Big ]\\
&=
\sum_{\sigma_\V} \psi_{\sigma_\V}^n \omega_{\sigma_\V}^{\rho, n+1}\vert C_{\sigma_{\mathcal{V}}}\vert \;\Delta \bu_{\sigma_\V} +
\sum_K\sum_{\sigma_\V\in K} \big (\psi_K^n-\psi_{\sigma_\V}\big )\vert C_{\sigma_{\mathcal{V}}}\vert \; \omega_{\sigma_\V}^{\rho, n+1}
\Delta \bu_{\sigma_\mathcal{V}}
\\
&\qquad+\sum_K \psi^n_K \Big [\sum_{\sigma_{\mathcal{E}}\in K}\omega^{\bu,n,K}_{\sigma_\mathcal{E}}|C_{\sigma_\mathcal{E}}|\Delta \rho_{\sigma_{\mathcal{E}}}\Big ]
\end{split}
\end{equation}
where we have set for simplicity
$$\omega^{\rho,n+1,K}_{{\sigma_{\mathcal{V}}}}:=\dfrac{\int_K \rho^{{n+1}}\varphi_{\sigma_\mathcal{V}}\; d\bx}{|C_{\sigma_{\mathcal{V}}}|}\; \text{ , }\;
\omega^{\bu,n,K}_{\sigma_\mathcal{E}}:=\dfrac{\int_K \bu^{{n}}\varphi_{\sigma_\mathcal{E}}\; d\bx}{|C_{\sigma_\mathcal{E}}|}$$
and (using that the support of $\varphi_{\sigma_\V}$ is the union of the elements that share $\sigma_\V$),
$$\omega^{\rho,n+1}_{{\sigma_{\mathcal{V}}}}:=\dfrac{\int_\Omega \rho^{{n+1}}\varphi_{\sigma_\mathcal{V}}\; d\bx}{|C_{\sigma_{\mathcal{V}}}|}$$


For the velocity, we have:
$$|C_{\sigma_{\mathcal{V}}}|\big ( \bu_{\sigma_\mathcal{V}}^{{n+1}}-\bu^{{n}}_{\sigma_\mathcal{V}}\big ) +\Delta t_n\sum_{K, \sigma_\mathcal{V}\in K}\Phi^\bu_{\sigma_\mathcal{V},K}=0,$$
where, for the forward Euler scheme \eqref{forw_Euler}, 
$$\Phi^\bu_{\sigma_\mathcal{V},K}=\Phi^\bu_{\sigma_\mathcal{V},K}(U^{{n}}
\big ).$$

For the density, we have
$$\vert C_{\sigma_\mathcal{E}}\vert \big ( \rho_{\sigma_{\mathcal{E}}}^{{n+1}}-\rho_{\sigma_{\mathcal{E}}}^{{n}}\big ) +\Delta t_n
\sum_{K, \sigma_{\mathcal{E}}\in K} \Phi_{\sigma_{\mathcal{E}},K}^{\rho}=0$$
and we note that the sum reduces to one term, hence
$$\vert C_{\sigma_\mathcal{E}}\vert \big ( \rho_{\sigma_{\mathcal{E}}}^{{n+1}}-\rho_{\sigma_{\mathcal{E}}}^{{n}}\big ) +\Delta t_n 
\Phi^\rho_{\sigma_{\mathcal{E}},K}=0,$$
where again
$$\Phi^\rho_{\sigma_{\mathcal{E}},K}=\Phi^\rho_{\sigma_\mathcal{E},K}(U^{{n}})$$and $K$ is \emph{the} element such that $\sigma_\mathcal{E}\in K.$
Using these relations in \eqref{momentum:2}, we get
\begin{equation}
\label{momentum:3}
\begin{split}
\int_{\R^d}&\psi(\bx,t) \; \big ( \rho^{{n+1}}\bu^{{n+1}}-\rho^{{n}}\bu^{{n}}\big ) \; d\bx+\Delta t_n\underbrace{
\sum_{\sigma_\V} \psi_{\sigma_\V}^n \omega_{\sigma_\V}^{\rho, n+1}
\bigg [ \sum_{K, \sigma_\V\in K}\Phi^\bu_{\sigma_\bV,K}\bigg ]}_{I}\\&
+
\Delta t_n\sum_K\bigg \{\sum_{\sigma_\V\in K} \big (\psi_K^n-\psi_{\sigma_\V}\big )\omega_{\sigma_\V}^{\rho, n+1,K}\Big [ \sum_{K', \sigma_\bV\in K'\cap K} \Phi^\bu_{\sigma_b,K}\Big ]\bigg \}
\\
&\qquad+
\Delta t_n\sum_K \psi^n_K \Big [\sum_{\sigma_{\mathcal{E}}\in K}\omega^{\bu,n,K}_{\sigma_\mathcal{E}}\Phi^\rho_{\sigma_\E,K}\Big ]
\\
&\qquad\qquad=0
\end{split}
\end{equation}
The term $I$ can be rewritten as 
\begin{equation*}
\begin{split}
\sum_{\sigma_\V} &
\psi_{\sigma_\V}^n \omega_{\sigma_\V}^{\rho, n+1}
\bigg [ \sum_{K, \sigma_\V\in K}\Phi^\bu_{\sigma_\bV,K}\bigg ]=
\sum_{K} \psi^n_K \sum_{\sigma_\V\in K} \omega_{\sigma_\V}^{\rho, n+1}\Phi^\bu_{\sigma_\bV,K}\\
&\qquad 
+
\sum_K \bigg [ \sum_{\sigma_\V\in K} \big ( \psi^n_K-\psi^n_{\sigma_\V}\big )\omega_{\sigma_\V}^{\rho, n+1}\Phi^\bu_{\sigma_\bV,K}
\bigg ]
\end{split}
\end{equation*}
Hence, gathering all together, we get
\begin{equation*}
\begin{split}
\int_{\R^d}&\psi(\bx,t) \; \big ( \rho^{{n+1}}\bu^{{n+1}}-\rho^{{n}}\bu^{{n}}\big ) \; d\bx\\
 &+\Delta t_n
\sum_{K} \psi^n_K\Bigg [  \sum_{\sigma_\V\in K} \omega_{\sigma_\V}^{\rho, n+1}\Phi^\bu_{\sigma_\bV,K}
\sum_{\sigma_{\mathcal{E}}\in K}\omega^{\bu,n,K}_{\sigma_\mathcal{E}}\Phi^\rho_{\sigma_\E,K}\Bigg ]\\\qquad &
+\Delta t_n
\sum_K \bigg [ \sum_{\sigma_\V\in K} \big ( \psi^n_K-\psi^n_{\sigma_\V}\big )\omega_{\sigma_\V}^{\rho, n+1}\Phi^\bu_{\sigma_\bV,K}
\bigg ]\\
&\qquad \qquad
+\Delta t_n
\sum_K\bigg \{\sum_{\sigma_\V\in K} \big (\psi_K^n-\psi_{\sigma_\V}\big )\omega_{\sigma_\V}^{\rho, n+1,K}\Big [ \sum_{K', \sigma_\bV\in K'\cap K} \Phi^\bu_{\sigma_b,K}\Big ]\bigg \}=0
\end{split}
\end{equation*}

Thus, we obtain the master equation:
\begin{subequations}
\begin{equation}
\label{master:1}
\begin{split}
 \int_{\R^d}\psi(\bx,t) \; &\big ( \rho^{{n+1}}\bu^{{n+1}}-\rho^{{n}}\bu^{{n}}\big ) \; d\bx 
 \\&+\Delta t\sum_K \psi_K^n \Bigg [ 
\sum_{\sigma_\mathcal{V}\in K} \omega^{\rho,n+1}_{\sigma_\mathcal{V}} \Phi_{\sigma_\mathcal{V},K}^\bu 
+\sum_{\sigma_{\mathcal{E}}\in K}\omega^{\bu,n,K}_{\sigma_\mathcal{V}}
\Phi^\rho_{\sigma_{\mathcal{E}},K}
 \Big ]\\
&\qquad+ \Delta t_n
\sum_{K}\Big ( {F}_K^\bm+ D_K^\bm
\Big )=0
\end{split}
\end{equation}
with {\color{black}
\begin{equation}
\label{master:2}
\begin{split}
F_K^\bm&=\sum_{\sigma_\V\in K} \big ( \psi^n_K-\psi^n_{\sigma_\V}\big )\omega_{\sigma_\V}^{\rho, n+1}\Phi^\bu_{\sigma_\V,K}\\
D_{K}^\bm&=\sum_{\sigma_\V\in K} \big (\psi_K^n-\psi_{\sigma_\V}\big )\omega_{\sigma_\V}^{\rho, n+1,K}\Big [ \sum_{K', \sigma_\bV\in K'\cap K} \Phi^\bu_{\sigma_b,K}\Big ]\\
 \omega_{\sigma_\V}^{\rho,n+1,K}&=\dfrac{ \int_K \rho^{{n+1}}\varphi_{\sigma_\mathcal{V}}\; d\bx}{\vert C_{\sigma_\mathcal{V}}\vert}, \qquad
\omega_{\sigma_\mathcal{V}}^{\rho,n+1}=
\sum\limits_{K, \sigma_\V\in K}\omega_{\sigma_\mathcal{V}}^{\rho,n+1,K}
\\
\omega^{\bu,n,K}_{\sigma_\mathcal{E}}&=\dfrac{\int_K \bu^{{n}}\varphi_{\sigma_\mathcal{E}}\; d\bx}{\vert C_{\sigma_\mathcal{E}}\vert}. 
\end{split}
\end{equation}  }
\end{subequations}

\bigskip
Let us now consider the total energy. First, we remark that (with similar notations as before) the following holds:
$$\Delta( \rho \bu^2)=\bu^{{n+1}}\cdot \Delta( \rho \bu)+\rho^{{n}}\bu^{{n}}\cdot \Delta \bu.$$
Combined with
$$\Delta (\rho \bu)=\rho^{{n+1}}\Delta \bu+\bu^{{n}}\Delta \rho$$
we obtain
$$\Delta (\rho \bu^2)=\big ( \rho^{{n+1}}\bu^{{n+1}}+\rho^{{n}}\bu^{{n}}\big )\cdot \Delta \bu+\bu^{{n+1}}\cdot \bu^{{n}}\Delta \rho.$$
To simplify, we will set
$$\widetilde{\bm}=\frac{\rho^{{n+1}}\bu^{{n+1}}+\rho^{{n}}\bu^{{n}}}{2}, \quad \widetilde{q^2}=\bu^{{n+1}}\cdot \bu^{{n}}.$$

Using these relations, we see that
\begin{equation*}
\begin{split}
\sum_K\psi_K\int_K\Delta E \; d\bx&=\sum_K \psi_K \bigg ( \int_K \Delta e\; d\bx + \int_K \tilde{\bm}\cdot \Delta \bu\; d\bx +\frac{1}{2}\int_K \widetilde{q^2}\Delta \rho\; d\bx \bigg )\\
&= \sum_K \psi_K\bigg [ \int_K \Delta e\; d\bx +\sum_{\sigma_\mathcal{V}\in K} \Delta \bu_{\sigma_{\mathcal{V}}}\cdot \int_K \widetilde{\bm}\varphi_{\sigma_\mathcal{V}}\; d\bx
\\
&\qquad+ \frac{1}{2}\sum_{\sigma_{\mathcal{E}}\in K}\Delta \rho_{\sigma_\mathcal{E}}\int_K  \widetilde{q^2}\varphi_{\sigma_\mathcal{E}}\; d\bx\bigg ].
\end{split}
\end{equation*}
First, we notice that
$$\int_K \Delta e \; d\bx=-\Delta t\sum_{\sigma_\mathcal{E}\in K}\Phi_{\sigma_\mathcal{E},K}^e.$$
Introducing
$$\theta_{\sigma_\mathcal{V}}^{\bm, K}=\dfrac{\int_K\widetilde{\bm}\varphi_{\sigma_\mathcal{V}}\; dx}{\vert  C_{\sigma_\mathcal{V}}\vert}\quad \text{and}\quad \theta_{\sigma_\mathcal{E}}^{q^2, K}=\dfrac{\int_K\widetilde{q^2}\varphi_{\sigma_\mathcal{E}}\; dx}{\vert  C_{\sigma_\mathcal{E}}\vert},$$
we get
$$\sum_{\sigma_\mathcal{V}\in K} \Delta \bu_{\sigma_{\mathcal{V}}}\cdot \int_K \widetilde{\bm}\varphi_{\sigma_\mathcal{V}}\; d\bx=
-\Delta t \sum_{\sigma_\mathcal{V}\in K}\theta_{\sigma_\mathcal{V}}^{\bm, K}\cdot\Bigg ( \sum_{K', \sigma_\mathcal{V}\in K'} \Phi_{\sigma_\mathcal{V}, K'}^\bu\Bigg )$$
and because $\sigma_\mathcal{E}$ belongs to a single element we have
$$\sum_{\sigma_{\mathcal{E}\in K}}\Delta \rho_{\sigma_\mathcal{E}}\int_K  \widetilde{q^2}\varphi_{\sigma_\mathcal{E}}\; d\bx=
-\Delta t \sum_{\sigma_{\mathcal{E}}\in K}\theta_{\sigma_\mathcal{E}}^{q^2, K}\Phi_{\sigma_\mathcal{E},K}^\rho.$$
Then proceeding as for the velocity, and introducing
$$\theta_{\sigma_\mathcal{V}}^{\bm}=\dfrac{\sum\limits_{K, \sigma_\mathcal{V}\in K}\int_K\widetilde{\bm}\varphi_{\sigma_\mathcal{V}}\; dx }{\vert  C_{\sigma_\mathcal{V}}\vert}=\sum_{K, \sigma_\mathcal{V}\in K}\theta_{\sigma_\mathcal{V}}^{\bm, K},$$
we get

{\color{black}
\begin{equation}
\label{master:energy}
\begin{split}\int_{\R^d}& \psi(\bx,t) \big (E^{n+1}-E^n\big )d\bx
\\+&\Delta t_n\sum_K \psi_K\Bigg ( \sum_{\sigma_\mathcal{E}\in K}\Phi_{\sigma_\mathcal{E},K}^e+\sum_{\sigma_\mathcal{V}\in K} \theta_{\sigma_\mathcal{V}}^{\bm}\cdot \Phi_{\sigma_\mathcal{V},K}^\bu+\frac{1}{2}\sum_{\sigma_{\mathcal{E}}\in K}\theta_{\sigma_\mathcal{E}}^{q^2, K}\Phi^\rho_{\sigma_{\mathcal{E},K}}\Bigg )\\
& +\Delta t_n
\sum_K\underbrace{
\Big[ \sum_{\sigma_\V} \big (\psi^n_K-\psi_{\sigma_\V}^n\big ) \theta_{\sigma_\V}^{\bm,K}\cdot \bigg \{\sum_{K', \sigma_\V\in K'\cap K}\Phi_{\sigma_\V,K}^\bu\bigg \}\Big ]}_{D_K^E}\\
&\qquad +\Delta t_n\sum_K\underbrace{ \sum_{\sigma_\mathcal{V}\in K} 
\big (\psi_{\sigma_\mathcal{V}}-\psi_K^n\big ) \theta_{\sigma_\mathcal{V}}^{\bm, K}\cdot\Phi_{\sigma\mathcal{V},K}^\bu}_{:=F_K^E}=0.
\end{split}
\end{equation}
}
\bigskip

{\color{black} As it is customary, we say that a family of meshes is shape regular  if there exists $\alpha>0$ depending only on this family such that 
the ratio of the inner  and outer diameters of any element of any  mesh of this family is greater than   $\alpha$. }
We show the following result in Appendix \ref{LxWthm}:
\begin{proposition}\label{LxW}
Assume that the mesh $\mathcal{T}_h$ is shape regular, we denote by $h$ the maximum diameter of the element of the mesh.
For any $K$, the residuals $\Phi_{\sigma_\mathcal{E},K}^\rho$, $\Phi_{\sigma_\mathcal{E},K}^e$, $\Phi_{\sigma_\mathcal{V},K}^\bu$ are Lipschitz continuous functions of their arguments, with Lipschitz constant of the form $C\cdot h$, where $C$ only depends on $\alpha$ and the maximum norm of the solution.

Assume that we have a family of meshes $\mathcal{F}=\{\mathcal{T}_{h_n}\}$ with $\lim\limits_{n\rightarrow +\infty} h_n=0$. We denote by $(U_{h_n})_{n\geq 0}$ the sequence of functions fulfilling:
$$\text{ if }t\in [t_n, t_{n+1}[,\; U(\bx, t)=(\rho(\bx, t_n), \bu(\bx,t_n), e(\bx,t_n))^T$$
with, if $K$ is the element that exists almost everywhere such that $\bx\in K$,
$$\rho(\bx,t_n)=\sum_{\sigma_\mathcal{E}\in K} \rho_{\sigma_\mathcal{E}}^n\varphi_{\sigma_\mathcal{E}}(\bx), \quad e(\bx,t_n)= \sum_{\sigma_\mathcal{E}\in K} e_{\sigma_\mathcal{E}}^n\varphi_{\sigma_\mathcal{E}}(\bx),$$
and $$\bu(\bx,t_n)=\sum_{\sigma_\mathcal{V}}\bu_{\sigma_\mathcal{V}}^n\varphi_{\sigma_\mathcal{V}}(\bx).$$ Here $\{(\rho^n_{\sigma_\mathcal{E}}), (\bu^n_{\sigma_\mathcal{V}}),(e^n_{\sigma_\mathcal{E}})\}_{n\geq 0, \sigma_\mathcal{E}, \sigma_\mathcal{V}}$ are defined by the introduced scheme. 

We assume that the density, velocity and internal energy are uniformly bounded and that a subsequence converges in $L^2$ towards $(\rho, \bu, e)$, where $\rho, e\in L^2(\R^d\times [0,T])$ and $\bu\in \big (L^2(\R^d\times [0,T]))^d$ . 

 We also assume  that the residuals satisfy
 \begin{equation}\label{momentum}
\sum_{\sigma_\mathcal{V}\in K} \omega^{\rho,n+1}_{\sigma_\mathcal{V}} \Phi_{\sigma_\mathcal{V},K}^\bu 
+\sum_{\sigma_{\mathcal{E}}\in K}\omega^{\bu,n,K}_{\sigma_\mathcal{E}}
\Phi^\rho_{\sigma_{\mathcal{E}},K}=\int_{\partial K} \bbf^\bm(U^{{n}})\cdot \bn\; d\gamma
\end{equation}
and 
\begin{equation}\label{energy}
\sum_{\sigma_\mathcal{E}\in K}\Phi_{\sigma_\mathcal{E},K}^e+\sum_{\sigma_\mathcal{V}\in K} \theta_{\sigma_\mathcal{V}}^{\bm}\cdot \Phi_{\sigma_\mathcal{V},K}^\bu+\frac{1}{2}\sum_{\sigma_{\mathcal{E}}}\theta_{\sigma_\mathcal{E}}^{q^2, K}\Phi_{\sigma_{\mathcal{E}}}=\int_{\partial K}\bbf^E(U^{{n}})\cdot \bn\; d\gamma,
\end{equation}
where we have set
\begin{equation}\label{coeff:corr}
\begin{split}
\omega_{\sigma_\mathcal{V}}^{\rho,n+1}=\dfrac{\sum\limits_{K, \sigma_\mathcal{V}\in K} \int_K \rho^{{n+1}}\varphi_{\sigma_{\mathcal{V}}}\; d\bx}{\vert C_{\sigma_\mathcal{V}}\vert}, &\qquad \omega^{\bu,n,K}_{\sigma_\mathcal{V}}=\dfrac{\int_K \bu^{{n}}\varphi_{\sigma_\mathcal{E}}\; d\bx}{\vert C_{\sigma_\mathcal{E}}\vert},\\
\widetilde{\bm}=\frac{\rho^{{n+1}}\bu^{{n+1}}+\rho^{{n}}\bu^{{n}}}{2}, &\qquad \widetilde{q^2}=\bu^{{n+1}}\cdot \bu^{{n}},
\\
\theta_{\sigma_\mathcal{V}}^{\bm}=\dfrac{\sum\limits_{K, \sigma_\mathcal{V}\in K}\int_K\widetilde{\bm}\varphi_{\sigma_\mathcal{V}}\; dx }{\vert  C_{\sigma_\mathcal{V}}\vert},&\qquad
 \theta_{\sigma_\mathcal{E}}^{q^2, K}=\dfrac{\int_K\widetilde{q^2}\varphi_{\sigma_\mathcal{E}}\; dx}{\vert  C_{\sigma_\mathcal{E}}\vert}
\end{split}
\end{equation}
with the assumption that there exists $C$ independent of $n$, such that $\Delta t\leq C h$. { In \eqref{momentum} (resp. \eqref{energy}), $\bbf^\bm(U^{{n}})\cdot \bn$ (resp. $\bbf^E(U^{{n}})\cdot \bn$) is the momentum component of the normal flux (resp. its energy component).}

Then $V=(\rho, \rho \bu, e+\tfrac{1}{2}\rho\bu^2)$ is a weak solution of the problem.
\end{proposition}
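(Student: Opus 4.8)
The plan is to treat the master identities \eqref{master:1} (momentum) and \eqref{master:energy} (energy) as \emph{exact} algebraic consequences of the scheme, and to run a Lax--Wendroff argument on them once the hypotheses \eqref{momentum} and \eqref{energy} have been used to replace the leading bracketed sums by genuine boundary fluxes. For the density no work is needed: its equation is already in conservation form with a single-valued numerical flux $\hbbf_\bn$, so the classical argument of \cite{raviart1} applies verbatim and yields the density component of the weak formulation. Inserting \eqref{momentum} into \eqref{master:1} and \eqref{energy} into \eqref{master:energy}, each master equation becomes, for a single step,
$$\int_{\R^d}\psi(\bx,t_n)\big(V^{n+1}-V^n\big)\,d\bx + \Delta t_n\sum_K \psi_K^n \int_{\partial K}\bbf\cdot\bn\,d\gamma + \Delta t_n\sum_K\big(F_K + D_K\big)=0,$$
with $V$ the relevant conservative variable ($\rho\bu$ or $E=e+\tfrac12\rho\bu^2$), $\bbf$ the corresponding physical flux, and $F_K,D_K$ the correction terms of \eqref{master:2} and \eqref{master:energy}.

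Summing over $0\le n\le N-1$, the first two families of terms are exactly those of a standard conservative scheme. An Abel summation in time transfers the difference onto $\psi$; since $\psi\in C^1_0(\R^d\times\R)$ and $U_{h_n}\to U$ in $L^2$, this converges to $-\int_0^T\!\!\int_{\R^d}\partial_t\psi\cdot V\,d\bx\,dt$ together with the initial slice $-\int_{\R^d}\psi(\cdot,0)\,V_0\,d\bx$. For the flux term I would regroup the element sum into a sum over faces, using that $\hbbf_\bn$ is single-valued across each interface, so that $\sum_K\psi_K^n\int_{\partial K}\bbf\cdot\bn$ equals $\sum_{\text{faces}}(\psi_K^n-\psi_{K'}^n)\int_f\bbf\cdot\bn$; consistency of $\hbbf_\bn$ and $L^2$ convergence then give convergence to $-\int_0^T\!\!\int_{\R^d}\nabla\psi\cdot\bbf\,d\bx\,dt$.

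The heart of the proof, and the step I expect to be the main obstacle, is to show that the correction contributions $\sum_n\Delta t_n\sum_K(F_K^\bm+D_K^\bm)$ and $\sum_n\Delta t_n\sum_K(F_K^E+D_K^E)$ vanish as $h_n\to 0$. The mechanism is that every correction term carries a factor $\psi_K^n-\psi_{\sigma_\V}^n$: since the centroid of $K$ and the degree of freedom $\sigma_\V$ lie in the same element of diameter at most $h_n$ and $\psi\in C^1$, this factor is $O(h_n\|\nabla\psi\|_\infty)$. The weights $\omega_{\sigma_\V}^{\rho,n+1}$ and $\theta_{\sigma_\V}^{\bm}$ are local averages of the uniformly bounded solution, hence bounded, and shape regularity bounds the number of elements sharing a given $\sigma_\V$, so that the neighbour coupling in $D_K$ is harmless. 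By the Lipschitz property stated in the Proposition together with the vanishing of the residuals on constant states (consistency), each $\Phi_{\sigma,K}$ is controlled by $C h_n$ times the local oscillation of $U^n$; combining these bounds with the restriction $\Delta t_n\le C h_n$ and applying Cauchy--Schwarz against the uniform $L^2$ bound on $(U_{h_n})$ should give that each correction sum is $O(h_n)$ and hence tends to $0$. The delicate point is precisely that the naive magnitude of these terms is the \emph{same} order as the main conservative term, so the single extra power of $h_n$ coming from $\psi_K^n-\psi_{\sigma_\V}^n$ — and no more — must be extracted carefully and matched with the Lipschitz-in-$h$ structure of the residuals; this is exactly where the staggered weights $\omega,\theta$ and the B\'ezier averaging are designed to pay off.

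Finally, collecting the three limits and letting $\psi$ range over $C^1_0(\R^d\times\R)$ yields, for each conservative component,
$$\int_0^T\!\!\int_{\R^d}\big(\partial_t\psi\cdot V+\nabla\psi\cdot\bbf(V)\big)\,d\bx\,dt + \int_{\R^d}\psi(\cdot,0)\,V_0\,d\bx=0,$$
which, assembled with the density component treated classically, is the definition of a weak solution for $V=(\rho,\rho\bu,E)$.
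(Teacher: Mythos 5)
Your overall architecture coincides with the paper's: insert the hypotheses \eqref{momentum} and \eqref{energy} into the master identities \eqref{master:1} and \eqref{master:energy}, treat the time-difference and boundary-flux sums by the classical Lax--Wendroff argument, and kill the correction terms $F_K$, $D_K$ using the factor $\psi_K^n-\psi^n_{\sigma_\V}=O(h)$, the boundedness of the weights $\omega$, $\theta$, shape regularity, and the Lipschitz property of the residuals. The treatment of the density and the final assembly are also as in the paper.

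There is, however, a genuine gap at exactly the step you identify as the heart of the matter. You claim that Cauchy--Schwarz against the \emph{uniform $L^2$ bound} on $(U_{h_n})$ yields that each correction sum is $O(h_n)$. This is not correct in general: writing $\abs{\Phi_{\sigma,K}}\le C h\cdot\mathrm{osc}_K(U^n)$ (Lipschitz constant $Ch$ plus vanishing on constants), the correction sums are controlled by $\sum_n\Delta t_n\sum_K h^2\,\mathrm{osc}_K(U^n)$, and a uniform bound on the oscillations only gives $\sum_K \mathrm{osc}_K \lesssim h^{-d}\cdot C$, i.e.\ a total of order $h^{2-d}$ --- which is $O(h)$ only in one space dimension and does not tend to zero for $d\ge 2$, whereas the Proposition is claimed (and used) in the multidimensional setting. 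The paper's mechanism is different and essential: it proves a \emph{weak BV} estimate (Lemma~\ref{weakBV}), namely that $\sum_n\Delta t_n\sum_{K}\vert K\vert\sum_{\sigma\in K}\norm{(u_h)_\sigma-\overline{(u_h)}_K}\to 0$, which follows from the uniform $L^\infty$ bound together with the \emph{strong} $L^2$ convergence of the subsequence (a compactness statement with no rate, not an $O(h_n)$ bound). It is this lemma, combined with $\Delta t_n\le Ch$ and shape regularity, that makes $\Delta t_n\sum_K F_K$ and $\Delta t_n\sum_K D_K$ vanish; the same lemma is also what lets one replace the numerical flux $\hbbf_\bn(U^-,U^+)$ by $\bbf(U)\cdot\bn$ in your face-regrouped boundary term, a substitution you attribute to ``consistency and $L^2$ convergence'' but which again requires the vanishing of the local oscillations in this averaged sense. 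To repair your argument, state and use the weak BV lemma (or reproduce its proof from the cited references) in place of the $O(h_n)$ claim.
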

\subsection{How to achieve   discrete conservation}\label{howto}
Since there is no ambiguity, we drop the dependency of the residuals with respect to the element.

 Given a set of residuals that satisfy \eqref{pseudo:conservation} also satisfy \eqref{momentum} and \eqref{energy}. In this section, we will show how to slightly modify the original scheme so that the new one will satisfy \eqref{pseudo:conservation},  \eqref{momentum} and \eqref{energy}, and hence if the scheme converges, we have convergence towards a weak solution.
 To achieve this, following \cite{Svetlana,paola,entropy}, we introduce the correction terms in the residuals. This needs to be done only for the velocity and the internal energy.
 
 Knowing at time $t_n$ the solution $(\rho^n, \bu^n, e^n)$ we obtain with the forward Euler step $(\rho^{n+1}, \bu^{n+1}, e^{n+1})$. For this, we first compute $\rho^{{n+1}}$ and then perform the update for the velocity and the energy:

 \medskip
\noindent \textbf{Momentum.}\par
We introduce a  correction $r^\bu_{\sigma,K}$ so that 
\begin{equation}
\label{correction:u}
\Psi_{\sigma_{V}}^\bu=\Phi^{\bu}_{\sigma_{V}}(U^{{n}})+r^\bu_{\sigma_\mathcal{V}}
\end{equation}
is such that
\eqref{momentum} holds true for the new set of residuals, 
i.e.
\begin{equation*}
\sum_{\sigma_\mathcal{V}\in K} \omega^{\rho,n+1}_{\sigma_\mathcal{V}} r_{\sigma_\mathcal{V}}^\bu=\int_{\partial K}\bbf^\bm(U^{{n}})\cdot \bn\; d\gamma-\bigg \{\sum_{\sigma_\mathcal{V}\in K} \omega^{\rho,n+1}_{\sigma_\mathcal{V}} \Phi_{\sigma_\mathcal{V},K}^\bu 
+\sum_{\sigma_{\mathcal{E}}\in K}\omega^{\bu,n,K}_{\sigma_\mathcal{E}}
\Phi^\rho_{\sigma_{\mathcal{E}},K}\bigg\}.
\end{equation*}
There is no reason to have a different value of $r_{\sigma_\mathcal{V}}^\bu$ unless for possible special needs, so we set $r_{\sigma_\mathcal{V}}^\bu=r^\bu$, and since a priori
$$\sum_{\sigma_\mathcal{V}\in K} \omega^{\rho,n+1}_{\sigma_\mathcal{V}}>0$$
 we get a unique value of $r^\bu$
 defined by
 \begin{equation}
\label{1}
\bigg (\sum_{\sigma_\mathcal{V}\in K} \omega^{\rho,n+1}_{\sigma_\mathcal{V}}\bigg ) r^\bu=\int_{\partial K} \bbf^\bm(U^{{n}})\cdot \bn\; d\gamma-\bigg \{\sum_{\sigma_\mathcal{V}\in K} \omega^{\rho,n+1}_{\sigma_\mathcal{V}} \Phi_{\sigma_\mathcal{V},K}^\bu 
+\sum_{\sigma_{\mathcal{E}}\in K}\omega^{\bu,n,K}_{\sigma_\mathcal{E}}
\Phi^\rho_{\sigma_{\mathcal{E}},K}\bigg\}.
\end{equation}

Once this is known, we can update the velocity and compute $\bu_{\sigma_{\mathcal{V}}}^{{n+1}}$.

 \medskip
\noindent \textbf{Energy.}\par 
Now we know $\rho^{{n}}$, $\rho^{{n+1}}$, $\bu^{{n}}$, $\bu^{{n+1}}$ and $e^{{n}}$, and have the \emph{updated} residuals for the velocity (there is no change for the density). Again we introduce a correction on the energy, $r_{\sigma_\mathcal{E}}^e$, and for the residual
$$\Psi_{\sigma_\mathcal{E}}^e=\Phi_{\sigma_\mathcal{E}}^e(U^{{n}})+r_{\sigma_\mathcal{E}}^e$$ to satisfy \eqref{energy}, we simply need:
\begin{equation}\label{2}
\sum_{\sigma_\mathcal{E}\in K}r_{\sigma_\mathcal{E}}^e=\int_{\partial K} \bbf^E(U^{{n}})\cdot \bn\; d\gamma-\bigg \{\sum_{\sigma_\mathcal{E}\in K}\Phi_{\sigma_\mathcal{E},K}^e+\sum_{\sigma_\mathcal{V}\in K} \theta_{\sigma_\mathcal{V}}^{\bm}\cdot \Phi_{\sigma_\mathcal{V},K}^\bu+\frac{1}{2}\sum_{\sigma_{\mathcal{E}}}\theta_{\sigma_\mathcal{V}}^{q^2, K}\Phi_{\sigma_{\mathcal{E}}}\bigg \}.
\end{equation}
 Since there is no reason to favour one degree of freedom with respect to the other ones, we take $r^e_{\sigma_{\mathcal{E}}}=r^e$, and again we can explicitly solve the equation and obtain the energy at the new time instance.

{
\subsection{Modifications for other schemes}\label{sec:dG}
We have presented this conservation recovery method using a class of schemes that might seem a bit narrow. In this section, we want to explain that it is not the case. This can apply to more general schemes, as soon as the update of any variable $w$ (density, velocity, energy), described by degrees of freedom $\sigma$ (point values, averages, moments), can be written as:
$$
\sum_{K, \sigma\in K} \Phi_\sigma^K.$$

\noindent \emph{About accuracy}:  The calculations made for the first order in time can be immediately extended to the higher accuracy ones in time.
One just has to add a temporal contribution into the new residuals, see for example \cite{zbMATH06826581} for more details. In Appendix \ref{appendix_dec} we briefly present a straightforward choice to obtain a high order accurate approximation, the Deferred Correction (DeC) approach which was used for the numerical results in Section \ref{results}.

 We also see that the exact form of the residuals is never used, so this can also be extended to any type of residuals, including for high order ones as in \cite{zbMATH06826581} or \cite{zbMATH06361106}. We also note that we have never used the global continuity of the velocity: instead of using $\big (W_h\big)^2$ for the velocity, we could have used $\big (V_h\big )^2$ in a discontinuous Galerkin like spirit.

{The coefficients of \eqref{coeff:corr} can be computed with any quadrature formula provided that the geometrical location of the quadrature points needed to evaluate the boundary integrals depend only on the faces and not the element, so that the edge contribution will sum up to zero. However, in the calculations we have always used enough points so that the quadrature formula are exact for the polynomial degree that we need. It is only to test global conservation that we have also used non exact quadrature formula. }

}
\section{Some numerical results}\label{numerics}

In this section, we want to illustrate the previous results and show that the method is effective. We are not claiming that these are the optimal ones, they can be seen more as a proof of concept. It is enough to describe what is done on $K=K_{j+1/2}$, for $j\in \Z$.
\subsection{Actual schemes}\label{sec:scheme}
In the following, $\hbbf_{j+1/2}$ is a numerical flux evaluated between the states 
\begin{equation*}
\begin{split}U_{j+1/2}^+&=\lim\limits_{x\rightarrow x_{j+1/2}, x>x_{j+1/2}}\big ( \rho, \rho\bu, e+\frac{1}{2}\rho\bu^2)(x)\\
 U_{j+1/2}^-&=\lim\limits_{x\rightarrow x_{j+1/2},x<x_{j+1/2}}( \rho, \rho\bu, e+\frac{1}{2}\rho\bu^2)(x).
 \end{split}
 \end{equation*}
 Here $\rho(x), \bu(x)$ and $e(x)$ are obtained from the approximation space. The flux $\hbbf$ has a $\rho$ component, a $\bm-$ component, and a total energy component, they are denoted by $\hbbf^\rho$, $\hbbf^m$, and $\hbbf^E$. Note that $\bu$ is continuous. In the numerical experiments, we will consider a solver constructed from an approximate Rieman solver because it appears we need intermediate states{ in the present description of the residual, see above: this allows to couple the neighbouring cells.} This could be the exact solver, we have used the HLLC one. Both provide solutions with the same success, the HLLC is easier to generalise.
 
We approximate the thermodynamic variables by polynomials of degree $r$ in each interval $K_{j+1/2}$  and the velocity by a continuous approximation which is a polynomial of degree $r+1$ in each interval $K_{j+1/2}$. We denote the approximation by $K(r+1)T(r)$. For the time discretisation, we use the DeC formulation briefly explained in Appendix \ref{appendix_dec}. For that reason, in each interval we expand the thermodynamic and kinetic function using B\'ezier polynomials since the integrals of the basis functions are always positive.

For simplicity, we reduce the formal time accuracy to second order, and we only need to describe the spatial terms: $\Phi_{\sigma_{\mathcal{E}}}^\rho$ for the density, $\Phi_{\sigma_{\mathcal{E}}}^e$  for the energy and $\Phi_{\sigma_{\mathcal{V}}}^\bu$  for the velocity.
The update of the density is done by the dG scheme:
\begin{equation}
\label{numer:rho}
\Phi_{\sigma_\mathcal{E}}^\rho=-\int_{K_{j+1/2}} \nabla\varphi_{\sigma_\mathcal{E}} \bbf^\rho\; d\bx +\bigg ( \hbbf_{j+1/2}^\rho \varphi_{\sigma_\mathcal{E}}(x_{j+1/2})-\hbbf_{j-1/2}^\rho \varphi_{\sigma_\mathcal{E}}(x_{j-1/2}) \bigg ).
\end{equation}

{\color{black}The update of the velocity is done by considering the centered residual:
\begin{equation}\label{numer:u:centered}
\rho^\star_K\; \Psi_{\sigma_\mathcal{V}}^\bu={\int_{K}\varphi_{\sigma_\mathcal{V}} \rho \bu\dpar{\bu}{x}\; d\bx-\int_K p\dpar{\varphi_{\sigma_\mathcal{V}}}{x}\;d\bx+\int_{\partial K} p^\star
\varphi_{\sigma_\mathcal{V}}\; d\gamma}{},
\end{equation} where $p^\star$ is the pressure evaluated at the quadrature points by the Riemann solver (this is why we have chosen HLLC), and $\rho^\star_K$ is the average of the density in $K$.
 Inspired by what is done in the RD context, and by \cite{burman}, we may need to consider a jump term for the velocity only because it is globally continuous. We have taken
\begin{equation}
\label{numer:u:jump}J_\sigma^K=\theta_K \beta_K h_K^2 \int_{\partial K} \big [ \nabla \varphi_{\sigma_\V}\big ] \; \big [ \nabla \bu\big ] \; d\gamma.\end{equation}
$\theta_K$  is a parameter (set to $0.1$ in the experiments), $\beta_K$ is an upper bound of the wave speeds in $K$
and a Local Lax Friedrich dissipation term
\label{numer:u}
\begin{equation}\label{numer:u:LxF}
D_\sigma^K=\alpha_K \big (\bu_{\sigma_\mathcal{V}}-\overline{\bu}\big )
\end{equation}
where $\alpha_K$ is an upper bound of the wave speeds in $K$ and $\overline{\bu}$ is the arithmetic average of the velocity within $K$. 
In practice, we will take either the residual 
\begin{subequations}
\begin{equation}\label{numer:u:1}
\Psi_{\sigma_\mathcal{V}}^\bu=\Psi_{\sigma_\mathcal{V}}^\bu+D_\sigma^K
\end{equation}
which will leads to a first order scheme
or
\begin{equation}\label{numer:u:2}
\Psi_{\sigma_\mathcal{V}}^\bu=\Psi_{\sigma_\mathcal{V}}^\bu+J_\sigma^K
\end{equation}
\end{subequations}
which will be higher order and stable.
}

{\color{black}
The update of the internal energy is simply done by
\begin{equation}\label{numer:e}
\Phi_{\sigma_{\mathcal{E}}}^e=\int_K\varphi_{\sigma_\mathcal{E}}\bigg (\bu\cdot \nabla e+(e+p)\text{ div }\bu\bigg )\;d\bx
\end{equation}
}

The schemes, even with the Euler forward time stepping, have no chance to be positivity preserving, and we note that the update of the velocity will be at most first order in time. Hence, inspired by the Residual Distribution schemes, we upgrade formal accuracy in two possible ways:
\begin{itemize}
\item Procedure 1: We use the residuals \eqref{numer:rho} and \eqref{numer:e} for the thermodynamic variables, and for the velocity, we replace $\Phi_{\sigma_\mathcal{V}}^\bu$ by $\big (\Phi_{\sigma_\mathcal{V}}^\bu\big )^\star$ defined by:
\begin{enumerate}
\item Compute $\Phi^{\bu}=\sum\limits_{\sigma_{\mathcal{V}}}\Phi_{\sigma_\mathcal{V}}^\bu$.
\item If $\Vert \Phi^{\bu}\Vert>0$, define 
$$x_{\sigma_\mathcal{V}}=\max\bigg (\dfrac{\Phi_{\sigma_\mathcal{V}}^\bu}{\Phi^{\bu}}, 0\bigg )$$
and
$$\big (\Phi_{\sigma_\mathcal{V}}^\bu\big )^\star=\dfrac{x_{\sigma_\mathcal{V}}}{\sum\limits_{\sigma_\mathcal{V}\in K_{j+1/2}} x_{\sigma_\mathcal{V}}}
\Phi^{\bu}.$$
\item Else $\big (\Phi_{\sigma_\mathcal{V}}^\bu\big )^\star=0$.
\end{enumerate}
\item Procedure 2: We do the same as before for $\Phi_{\sigma_\mathcal{V}}^\bu$, $\Phi_{\sigma_\mathcal{E}}^\rho$ and $\Phi_{\sigma_\mathcal{E}}^e$, where the thermodynamic residuals are now:
$$\Phi_{\sigma_\mathcal{E}}^\rho+\alpha_K \big ( \rho_{\sigma_\mathcal{E}}-\bar \rho_K\big )
\quad\text{and}\quad
\Phi_{\sigma_\mathcal{E}}^e+\alpha_K \big ( e_{\sigma_\mathcal{E}}-\bar e_K\big ),$$
where $\bar \rho_K$ (resp. $\bar e_K$) are the arithmetic average of the density DOFS  (resp. internal energy) in $K$.
\end{itemize}
{\color{black}We may also need to add a jump term of the type \eqref{numer:u:jump} on all the variables (we have not done this here.}
We refer the reader to \cite{zbMATH06826581} for more details, and in particular why formal accuracy is increased. This procedure will lead, in practice, to a scheme that preserves the positivity of the density and the pressure.

{\color{black} In the experiments where we want to test the accuracy of the method, we will use the combination \eqref{numer:rho},  \eqref{numer:u:2}, \eqref{numer:e}.}
\subsection{Results}\label{results}

Here we solve a series of shock tube problems to assess the accuracy and robustness of the proposed RD staggered scheme. For the numerical experiments of this section, we will use the ideal EOS for gas, linking the pressure, the internal energy and the density: $p = (\gamma -1)\rho e,$ where $\gamma = 1.4$. {All the solutions are displayed with $1000$ points: all the examples are shock tube problems where the exact solution is known, so we can show the convergence of the method to the exact solution. This is not needed for other purposes, such as stability or other reasons. We have also computed the solution with a more reasonable number of points, the results are similar to what could be obtained with more classical methods when the correction is activated. We do not show them here to lower the number of plots.}

{\color{black}\subsubsection{A smooth case}\label{sec:smooth}
The purpose is to test accuracy. We test the accuracy of our scheme (with corrections) on a smooth isentropic flow problem similar to the test case introduced in \cite{ChengShu2014}. The initial data for our test problem is the following:
\begin{equation*}
\rho_0(x) = 1 + 0.9\sin(2 \pi x), \quad u_0(x) = 0, \quad p_0(x) = \rho^{\gamma}(x,0), \quad x \in [-1,1].
\end{equation*}
with polytropic index $\gamma=3$ and periodic boundary conditions. 

The exact density and velocity in this case can be obtained by the method of characteristics and is explicitly given by
\begin{equation*}
\rho(x,t) = \dfrac12\big( \rho_0(x_1) + \rho_0(x_2)\big), \quad u(x,t) = \sqrt{3}\big(\rho(x,t)-\rho_0(x_1) \big),
\end{equation*}
where for each coordinate $x$ and time $t$ the values $x_1$ and $x_2$ are solutions of the nonlinear equations
\begin{align*}
& x + \sqrt{3}\rho_0(x_1) t - x_1 = 0, \\
& x - \sqrt{3}\rho_0(x_2) t - x_2 = 0.
\end{align*}
The errors ($L^1$ only is plotted because all the others shows a similar behaviour) is displayed in figure \ref{errors}
\begin{figure}[h]
\subfigure[]{\includegraphics[width=0.45\textwidth]{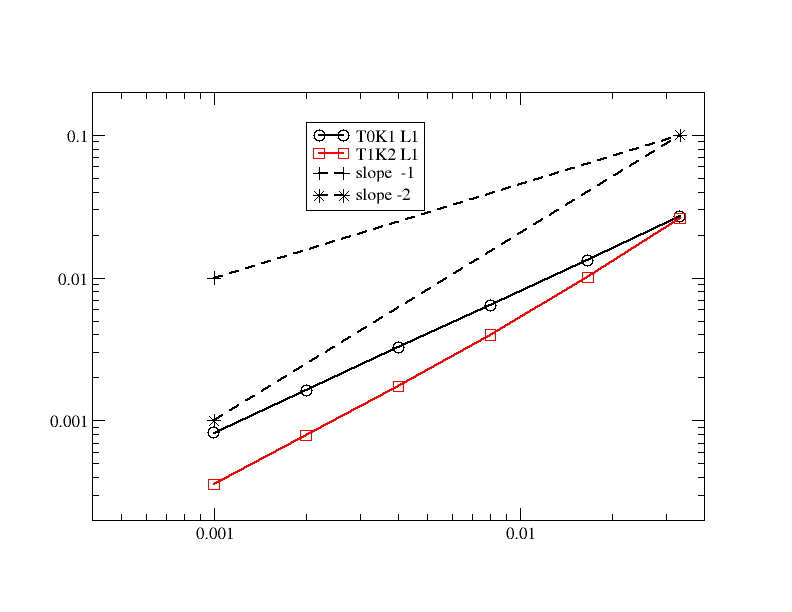}}
\subfigure[]{\includegraphics[width=0.45\textwidth]{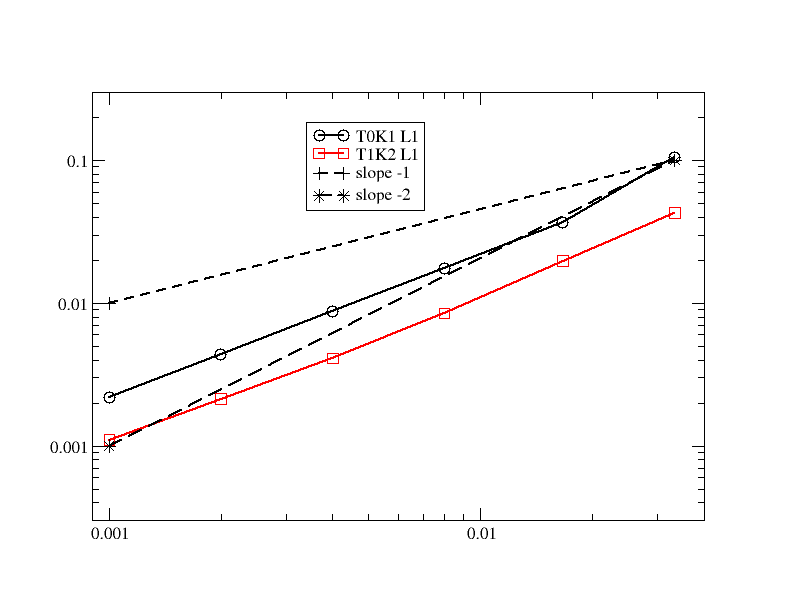}}
\caption{\label{errors} (a) error on the density, (b) error on the velocity}
\end{figure}
The errors are somehow between second and first order: the scheme in time is second order at most in this implementation, this is a choice, we could have used a third order scheme. }
\subsubsection{The Sod shock tube problem}
The Sod shock tube is a common one-dimensional Riemann problem for the illustration of the interesting behavior of numerical solutions to hyperbolic Euler equations of gas dynamics. The structure of the solution involves three distinct waves: a left rarefaction wave, a contact discontinuity, and a right shock wave. This test case is used to determine if a scheme recovers properly discrete Rankine-Hugoniot relations on the shock. If we put the initial discontinuity at $x=0.5$ in the domain $[0,1]$, the initial data for this problem is given as follows:

\begin{equation}\label{InCondSod}
(\rho_0, u_0, p_0) = \left\{\begin{array}{ccc}
(1.0,0.0,1.0), & \text{if}  & x<0.5,\\
\\
(0.125,0.0,0.1), & \text{if}& x>0.5.
\end{array}\right. 
\end{equation}
  In Figure \ref{sod}, profiles of density, velocity and pressure are depicted with a reference solution for a mesh containing $1000$ cells.  We also have plotted the solution obtained without any correction. Both have been obtained with the T0K1 scheme, and first order in time. We see that the uncorrected solution is completely off, as expected, but also that the correction we have defined provides an accurate approximation of all three distinct waves. 
\begin{figure}
\begin{center}
\subfigure[]{\includegraphics[width=0.45\textwidth]{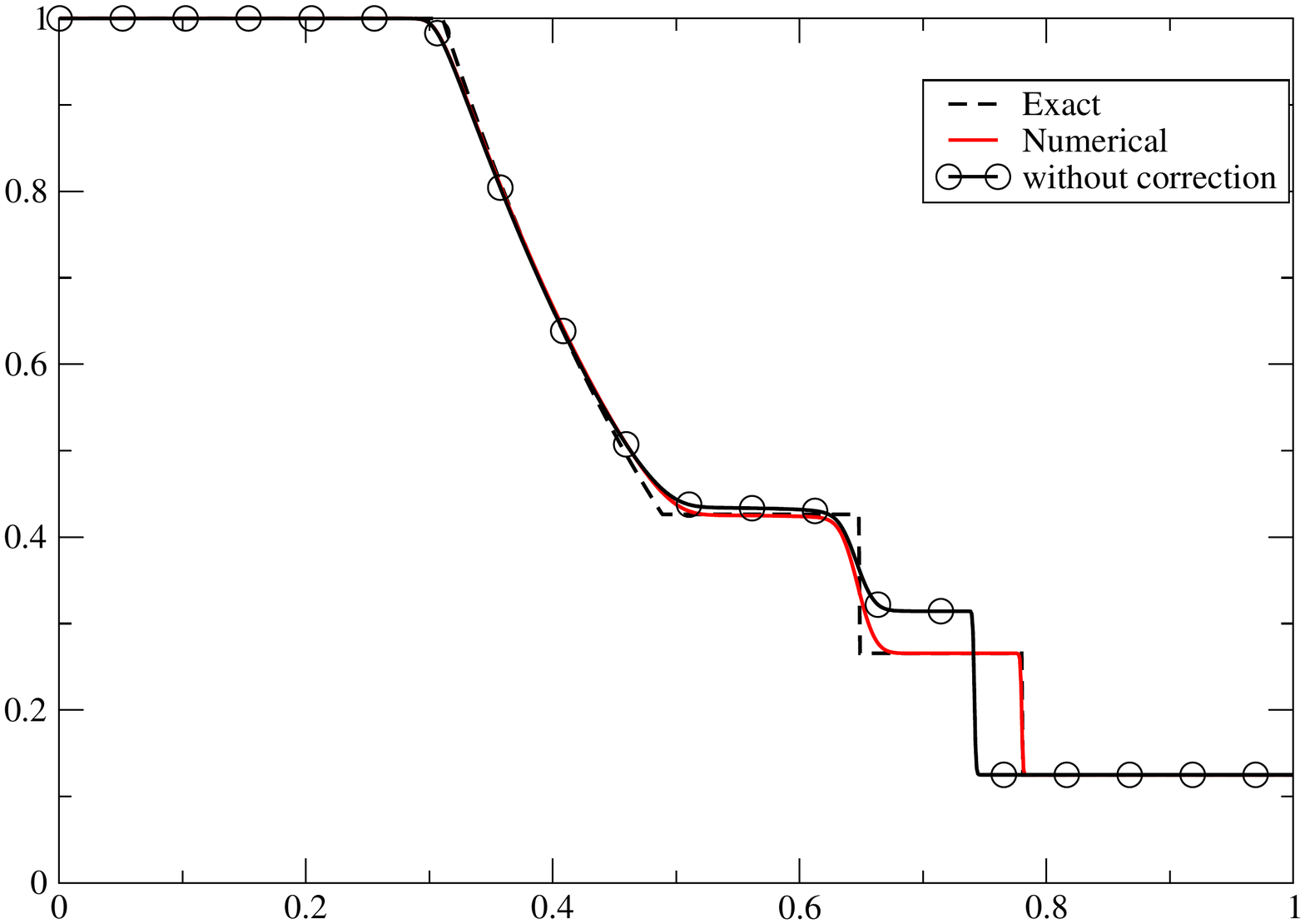}}
\subfigure[]{\includegraphics[width=0.45\textwidth]{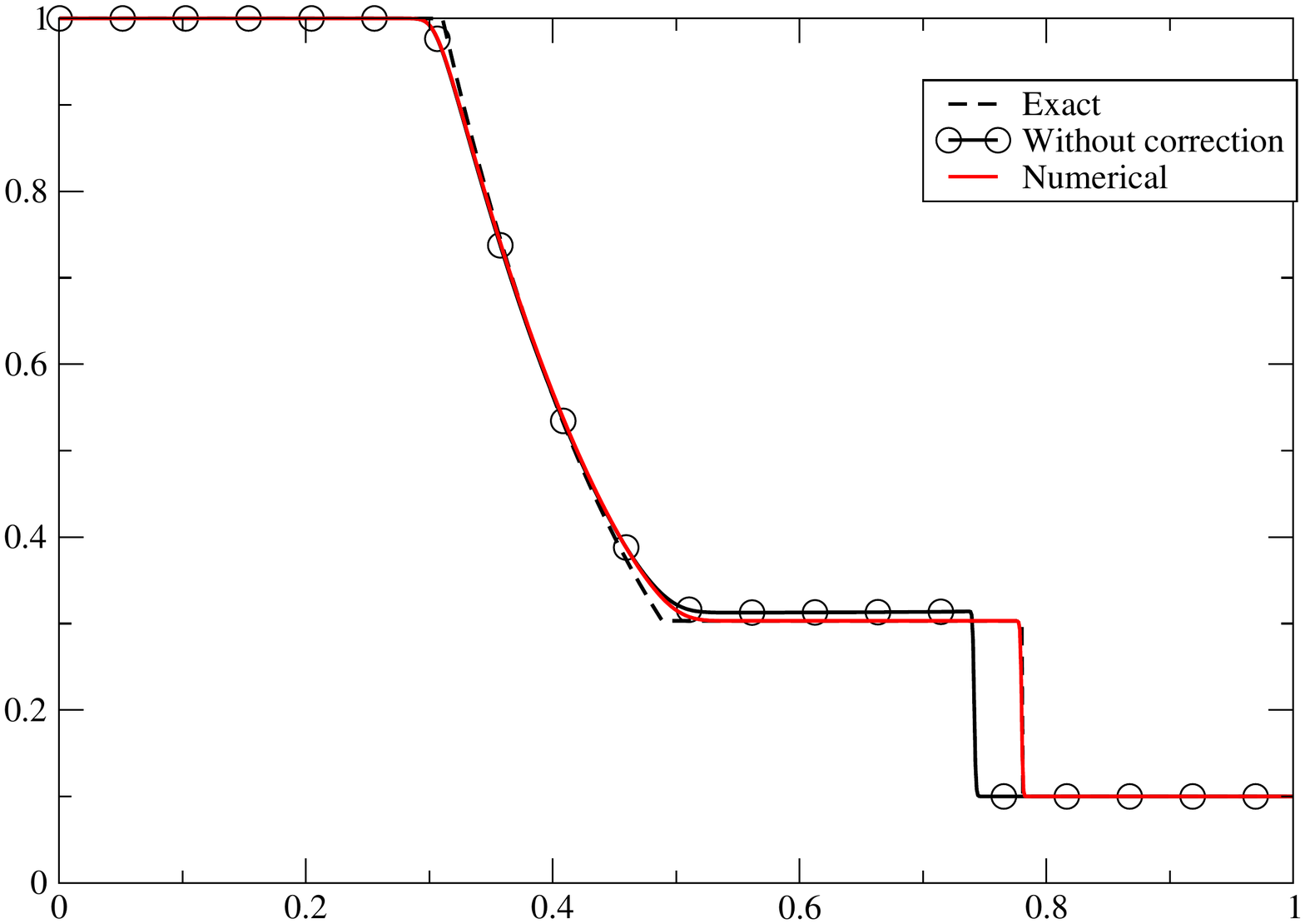}}
\subfigure[]{\includegraphics[width=0.45\textwidth]{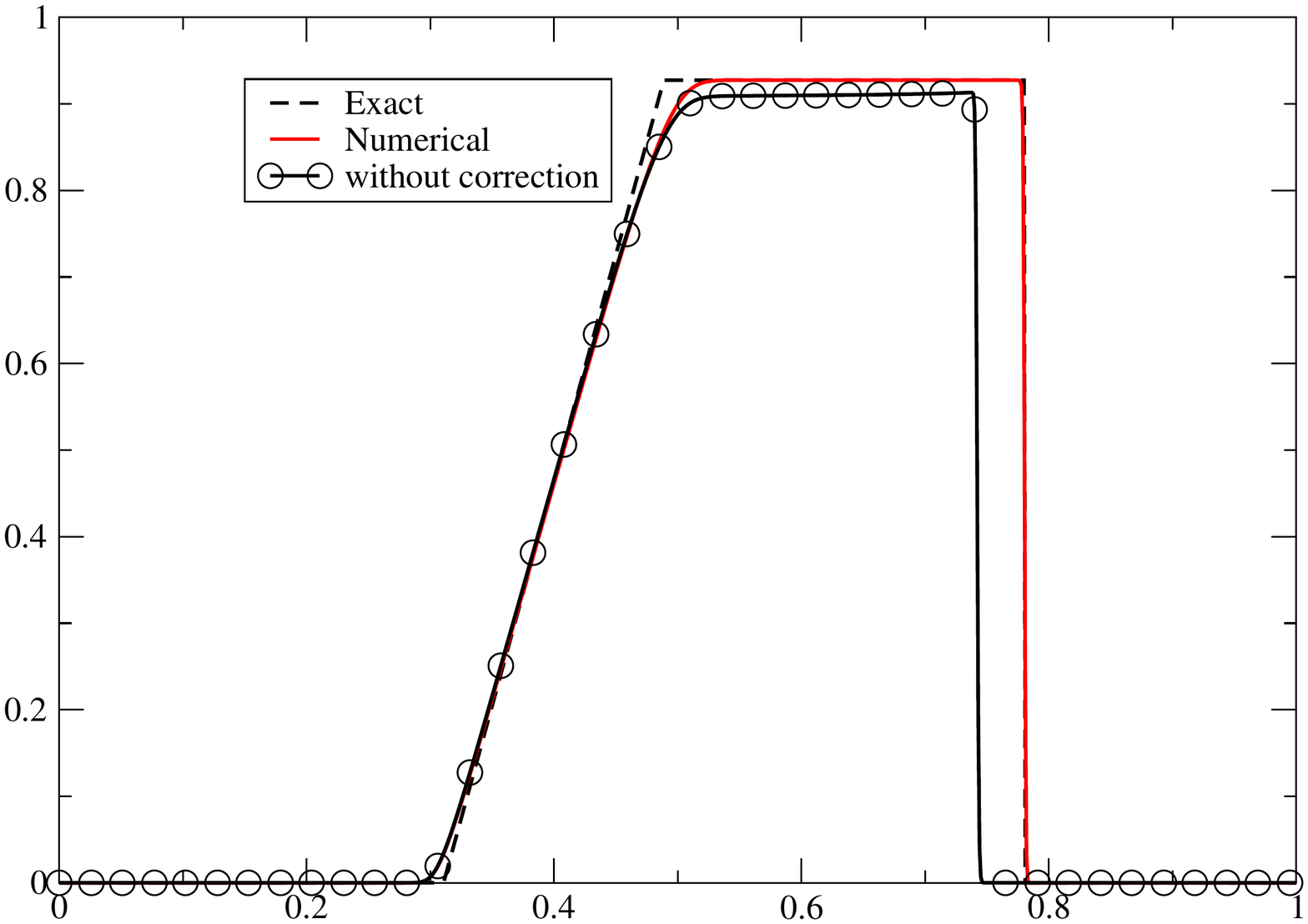}}
\end{center}
\caption{Solution of the Sod shock tube problem for (a) density, (b) velocity and (c) pressure at time $T = 0.16$ with $CFL = 0.4$. In each figure, the exact, numerical as well as the solution without correction are depicted.}
\label{sod}\end{figure}

In Figure \ref{sod2}, we show the results obtained by the first order (K1T0) and second order (K2T1) in time and space schemes. One can notice some improvements, but this example mostly shows that the correction is also effective when we use representations with polynomials of higher degree.
\begin{figure}
\begin{center}
\subfigure[]{\includegraphics[width=0.45\textwidth]{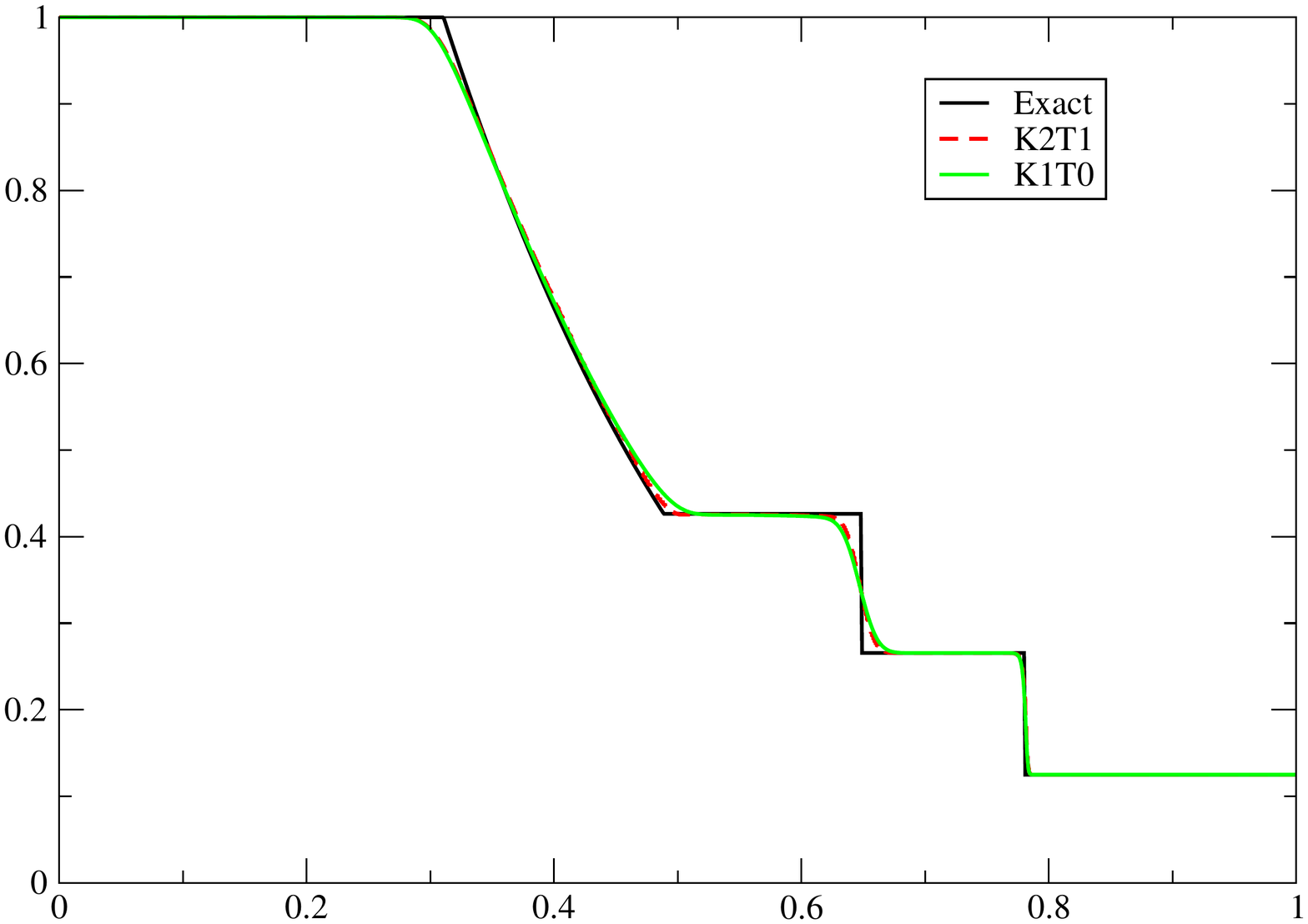}}
\subfigure[]{\includegraphics[width=0.45\textwidth]{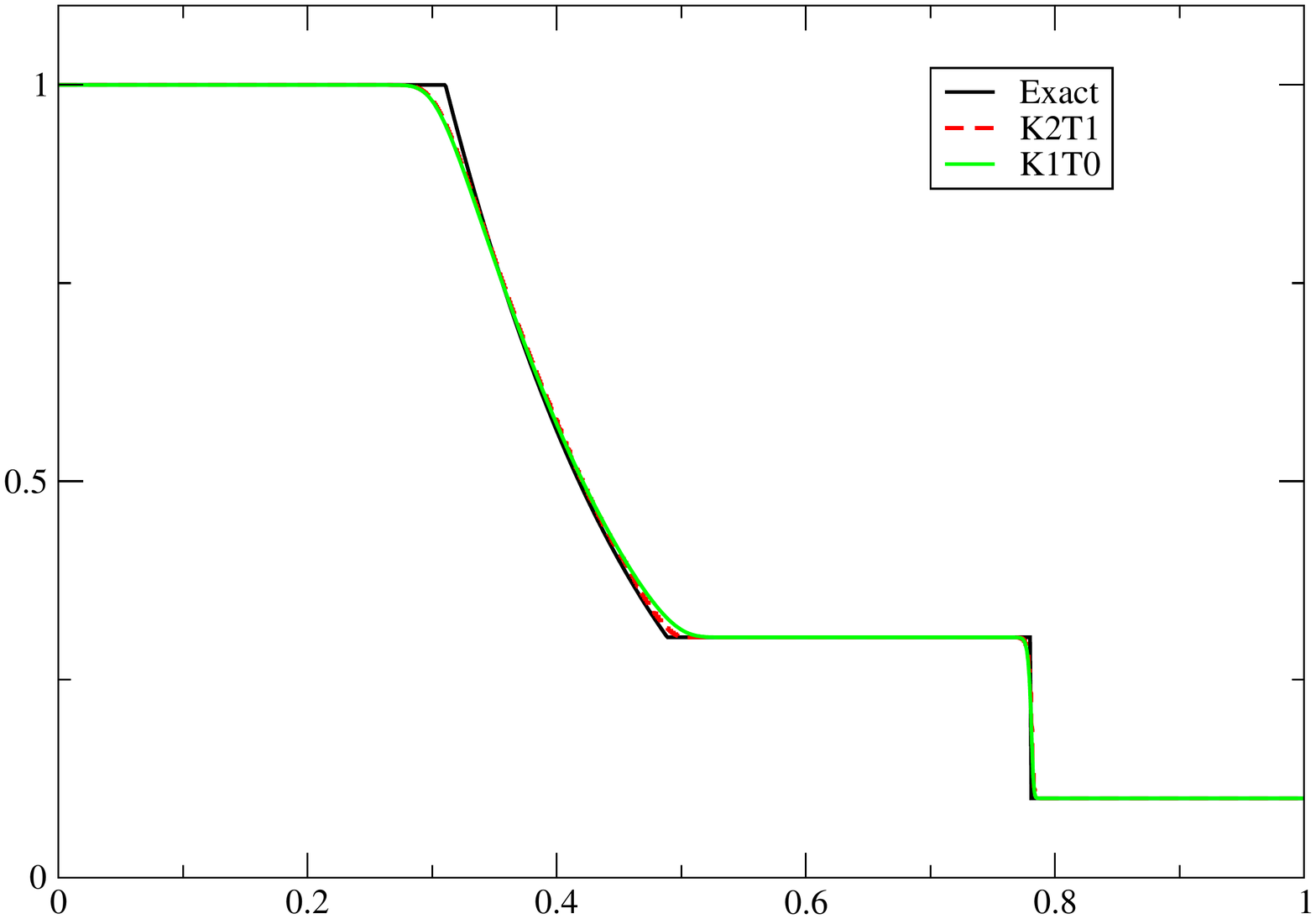}}
\subfigure[]{\includegraphics[width=0.45\textwidth]{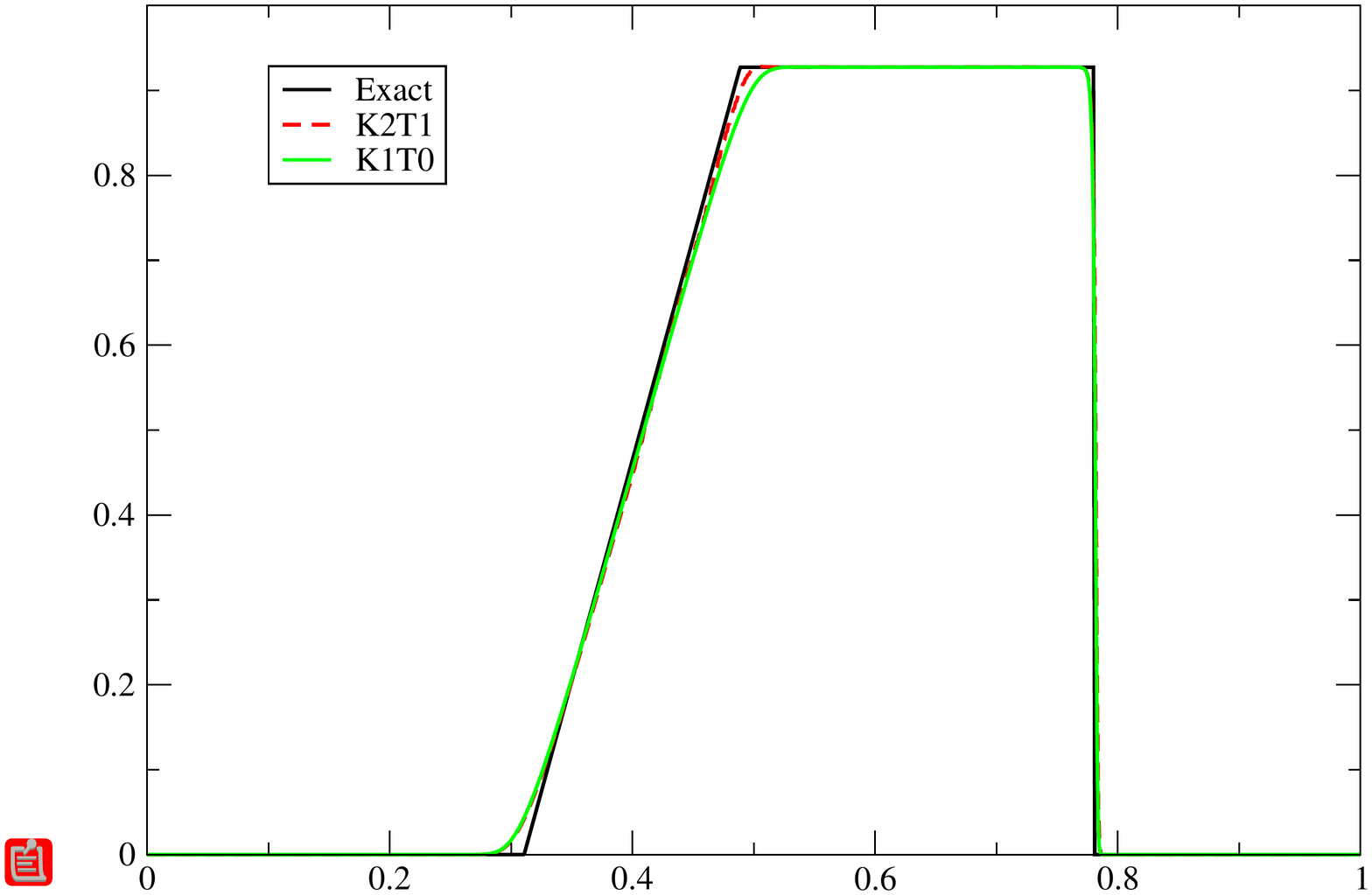}}
\end{center}
\caption{Solution of the Sod shock tube problem for (a) density, (b) velocity and (c) pressure at time $T = 0.16$ with $CFL = 0.4$. In each figure, the exact solution and the solutions obtained with K2T1 and K1T0 on a mesh with $1000$ points are depicted.}
\label{sod2}\end{figure}

\subsubsection{Strong shock}
The next test problem contains a left rarefaction wave, a contact discontinuity, and a strong right shock wave. This test case highlights the robustness of the numerical methods for fluid dynamics.
The initial data, again in the domain $[0,1]$,  are:
\begin{equation}\label{strong}
(\rho_0, u_0, p_0) = \left\{\begin{array}{ccc}
(1.0,0.0,1000.0), & \text{if}  & x<0.5,\\
\\
(1.0,0.0,0.01), & \text{if} & x>0.5.
\end{array}\right. 
\end{equation}
In Figure \ref{strong_shock}, profiles of density, velocity and pressure are depicted with a reference solution for a mesh containing $1000$ cells. It indicates that the first-order scheme can accurately resolve strong shocks. As before, the results of Figure \ref{strong_shock2} show that the correction is effective by comparing the results of the first order (K1T0) and second order (K2T1) in time and space schemes.

\begin{figure}
\begin{center}
\subfigure[]{\includegraphics[width=0.45\textwidth]{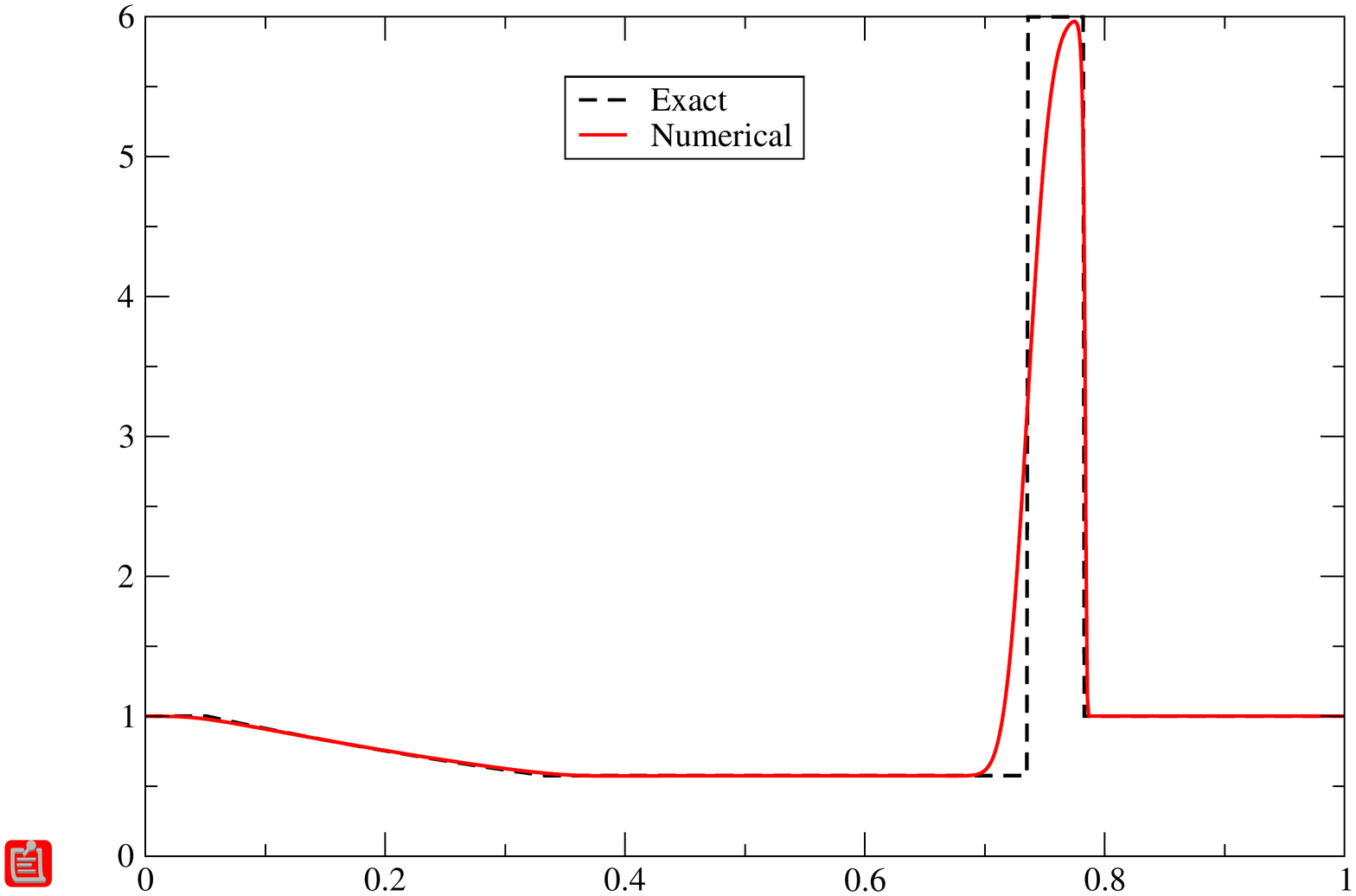}}
\subfigure[]{\includegraphics[width=0.45\textwidth]{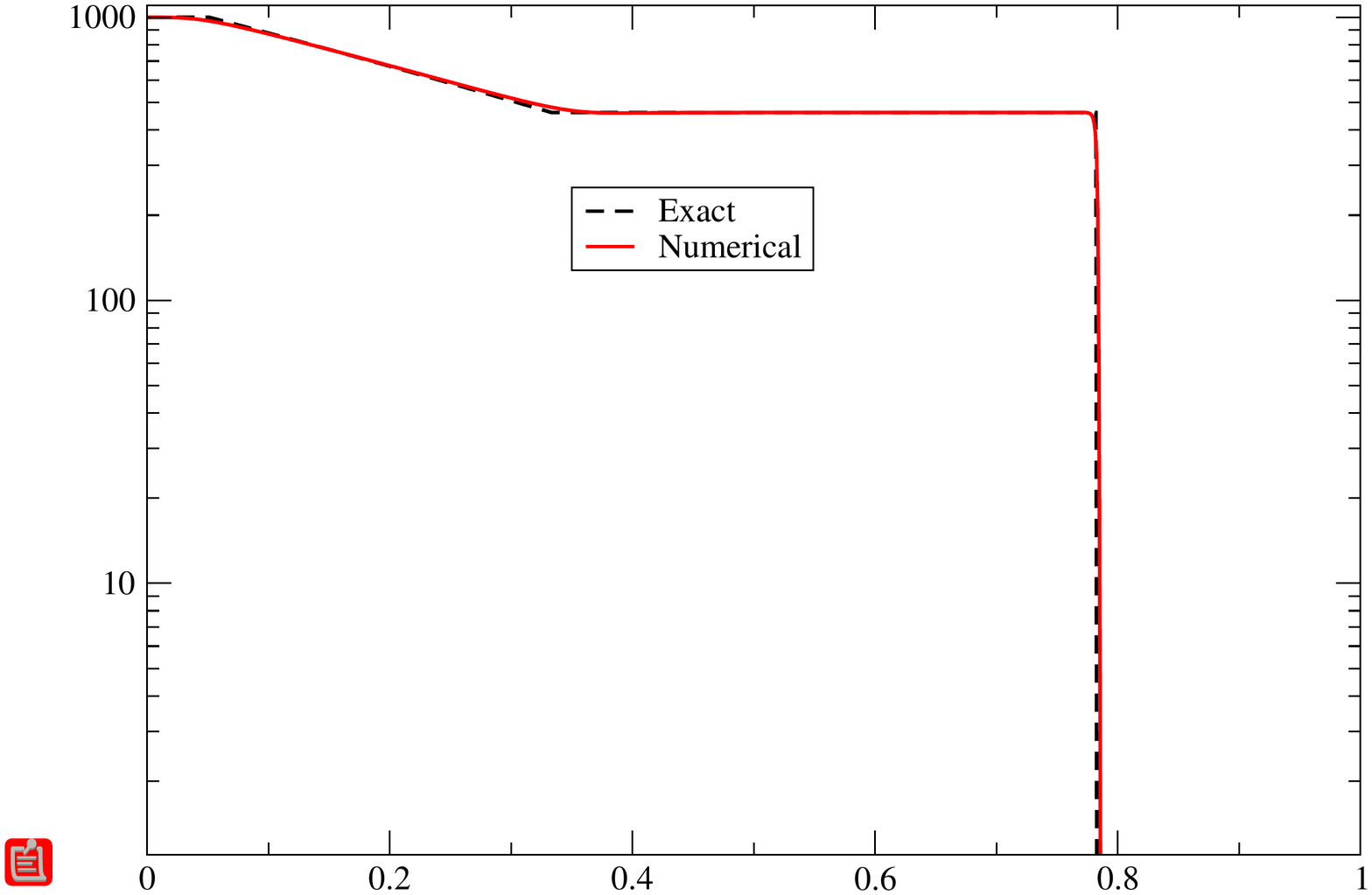}}
\subfigure[]{\includegraphics[width=0.45\textwidth]{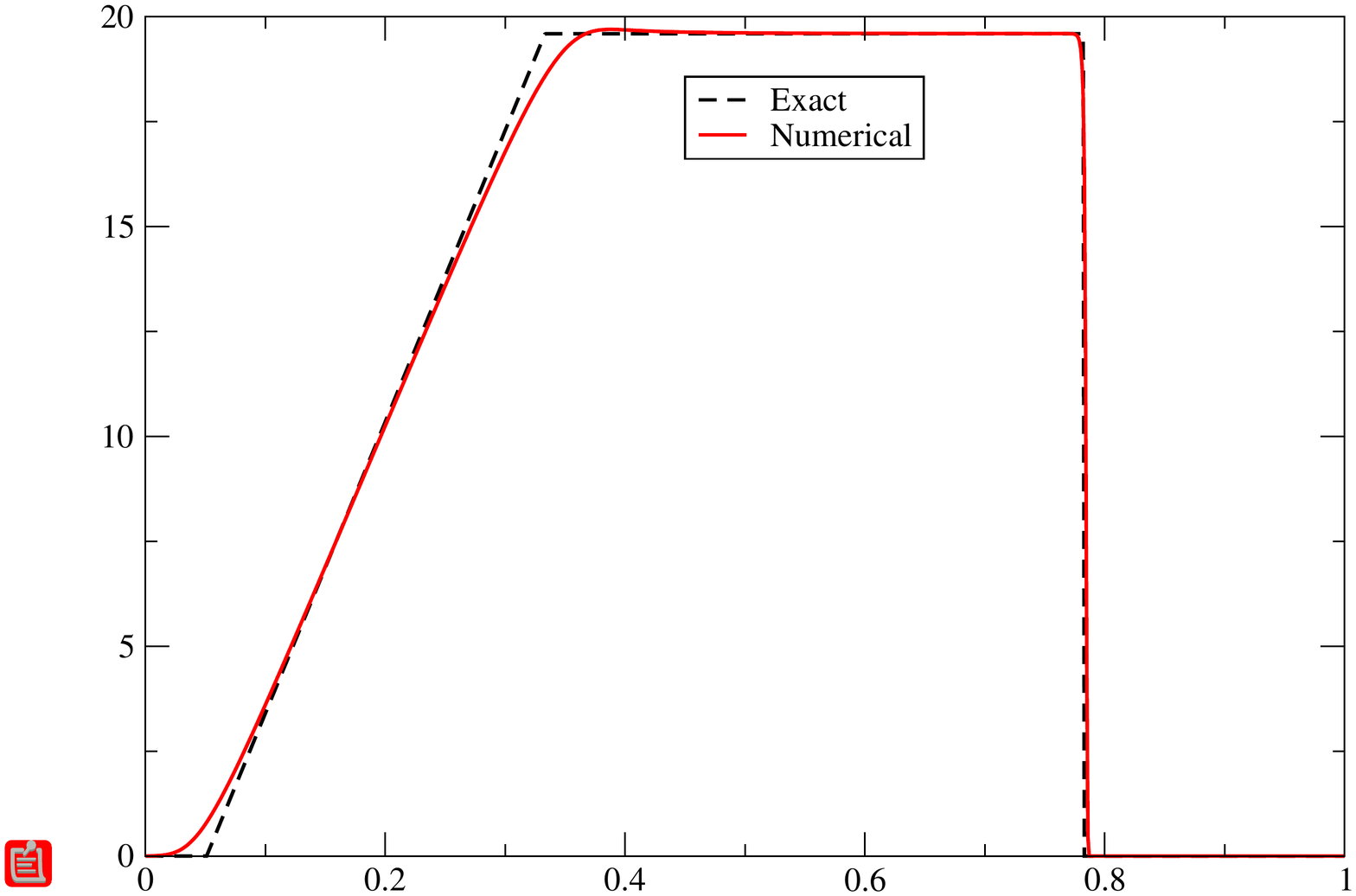}}
\end{center}
\caption{Exact and numerical solutions of the strong shock problem for (a) density, (b) velocity and (c) pressure at time $T = 0.012$ with $CFL = 0.4$.}
\label{strong_shock}\end{figure}

\begin{figure}
\begin{center}
\subfigure[]{\includegraphics[width=0.45\textwidth]{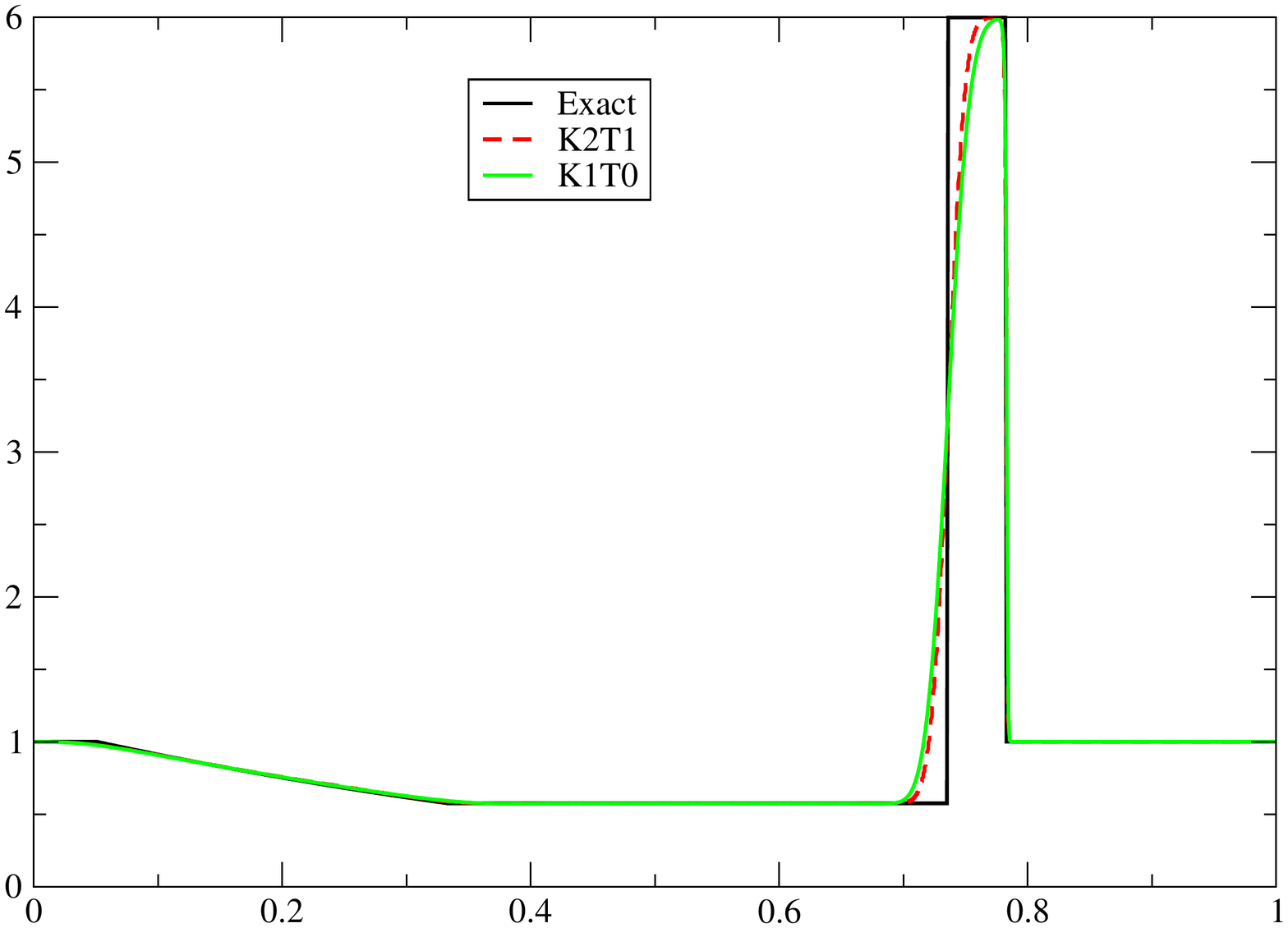}}
\subfigure[]{\includegraphics[width=0.45\textwidth]{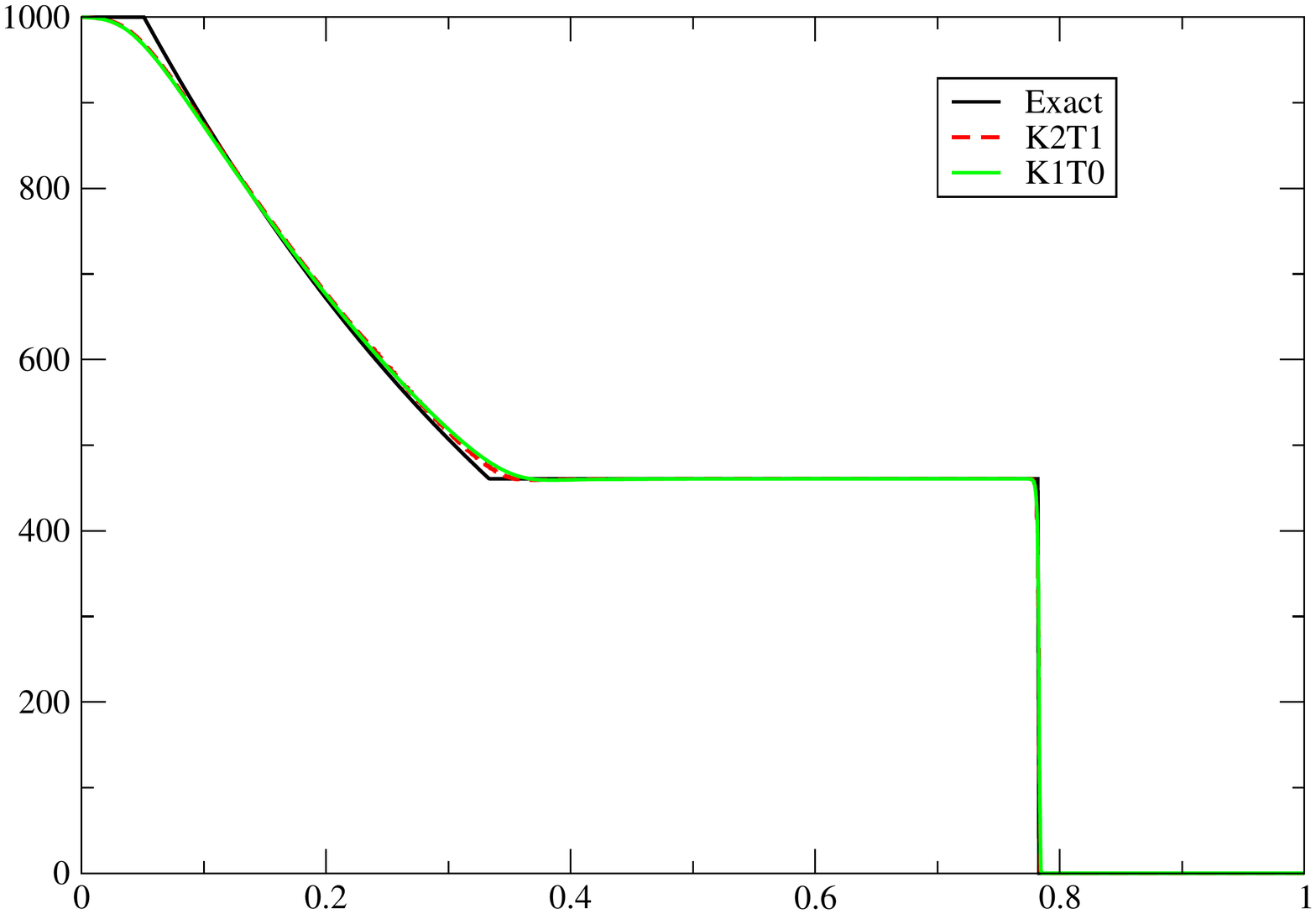}}
\subfigure[]{\includegraphics[width=0.45\textwidth]{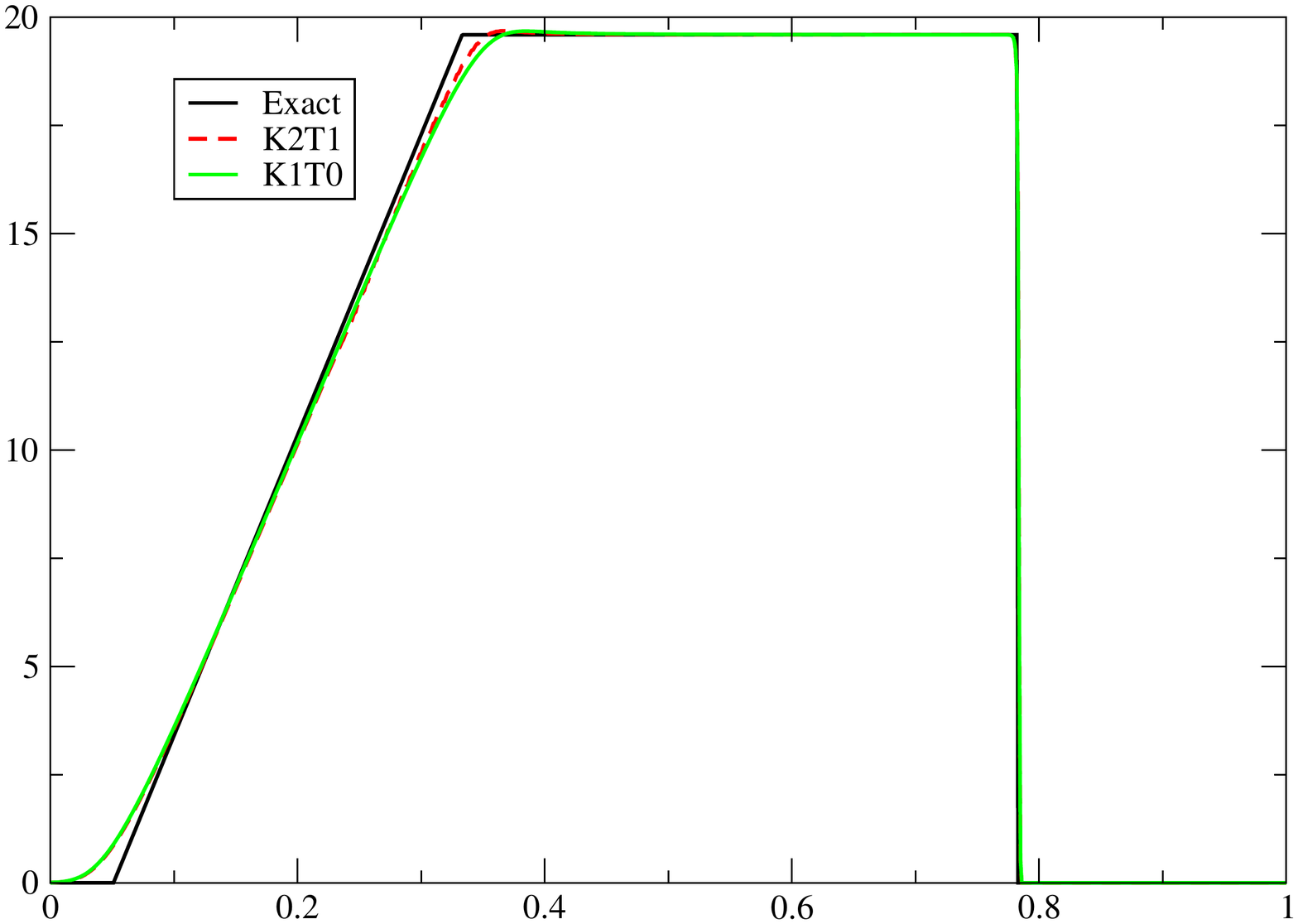}}
\end{center}
\caption{Solution of the strong shock problem for (a) density, (b) velocity and (c) pressure at time $T = 0.012$ with $CFL = 0.4$. In each figure, the exact solution and the solutions obtained with K2T1 and K1T0 are depicted.}
\label{strong_shock2}\end{figure}

\subsubsection{123-problem}
For the next test, called the 123 problem, the solution consists of a left rarefaction wave, a contact discontinuity and a right rarefaction wave. Two rarefaction waves are traveling in opposite directions. A low-density and low-pressure region is generated in between. 
The initial data for this problem is given as follows:
\begin{equation}\label{123}
(\rho_0, u_0, p_0) = \left\{\begin{array}{ccc}
(1.0,-2.0,0.4), & \text{if}  & 0.0\le x<0.5,
\\
(1.0,2.0,0.4), & \text{if} & 0.5<x<1.
\end{array}\right. 
\end{equation}
The results for the first-order scheme are depicted
in Figure \ref{123problem} with a reference solution on a mesh containing $1000$ cells. In Figure \ref{123problem2} we show again that the correction is effective by comparing the results of the first order (K1T0) and second order (K2T1) in time and space schemes.

\begin{figure}
\begin{center}
\subfigure[]{\includegraphics[width=0.45\textwidth]{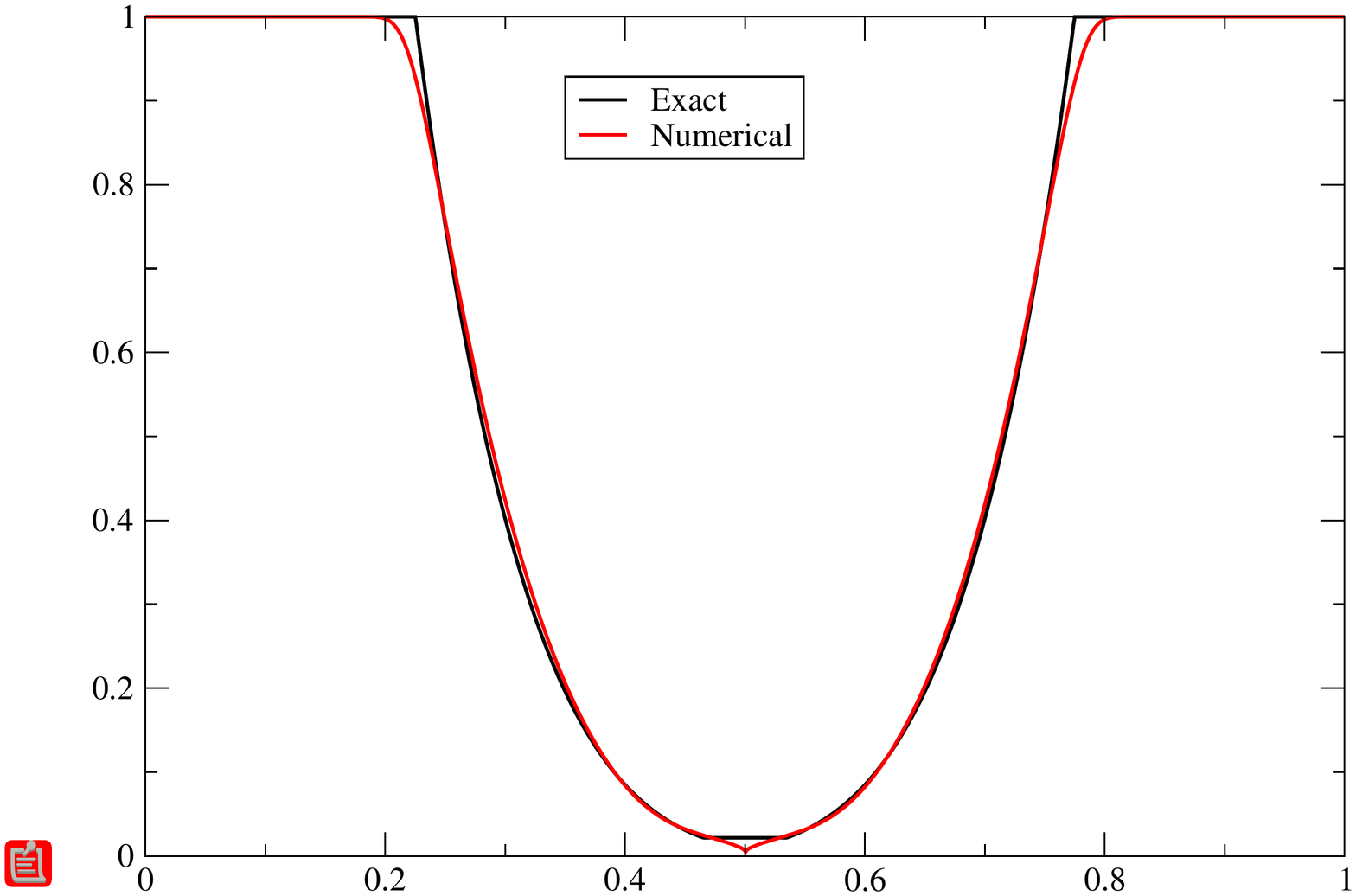}}
\subfigure[]{\includegraphics[width=0.45\textwidth]{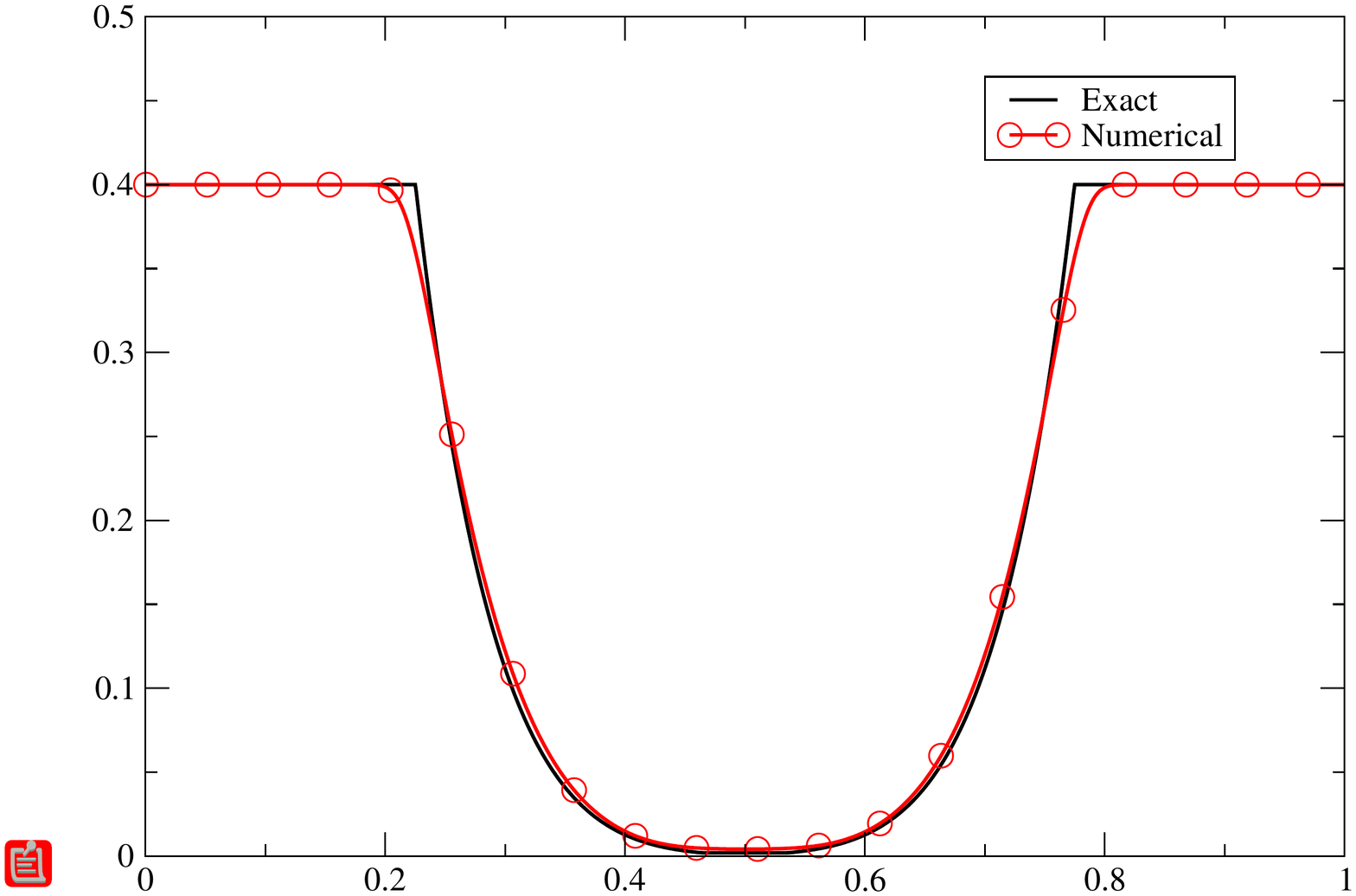}}
\subfigure[]{\includegraphics[width=0.45\textwidth]{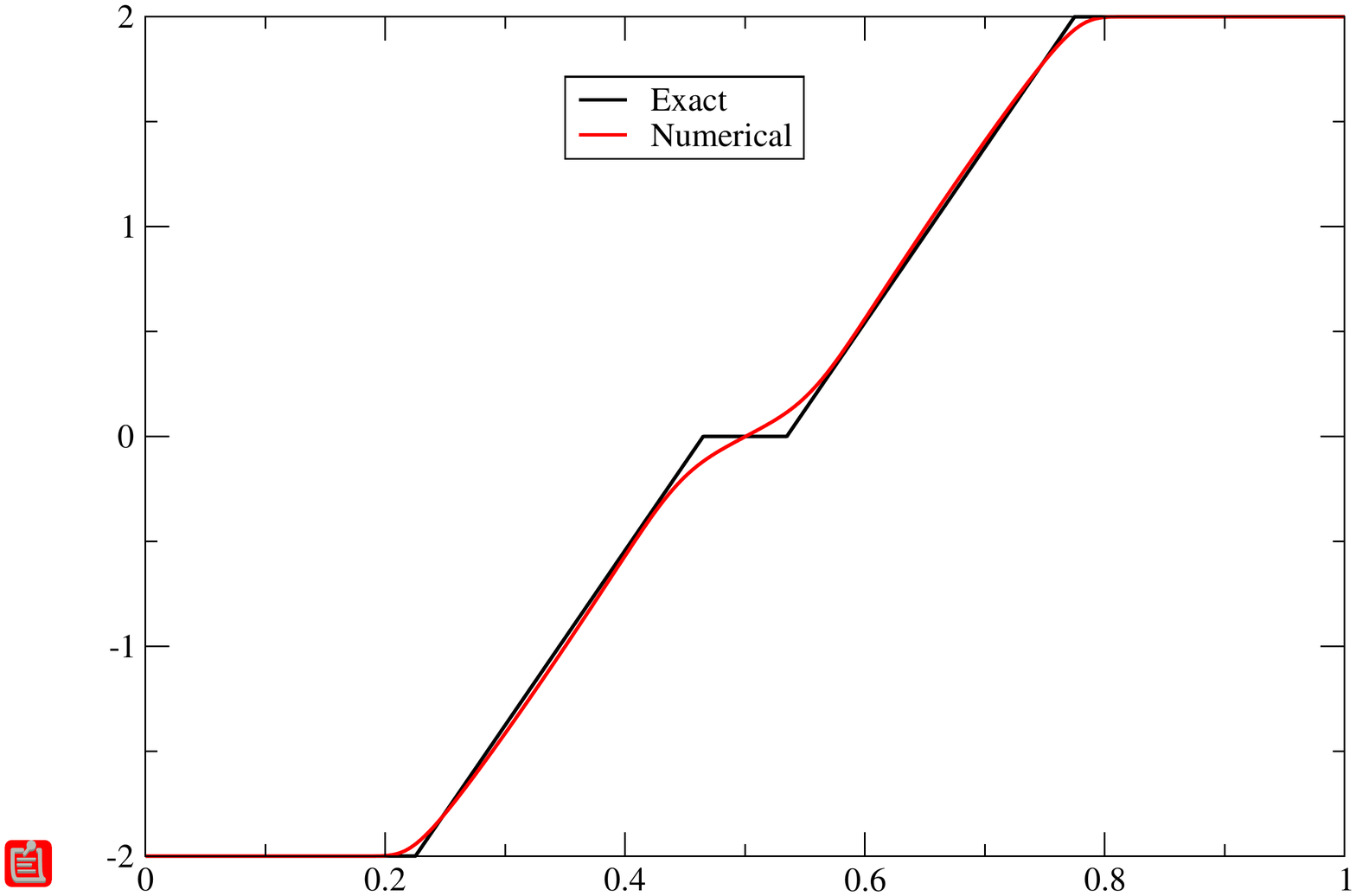}}
\end{center}

\caption{Exact and numerical solutions of the $123$-problem for (a) density, (b) velocity and (c) pressure at time $T = 0.15$ with $CFL = 0.4$.}
\label{123problem}\end{figure}
\begin{figure}
\begin{center}
\subfigure[]{\includegraphics[width=0.45\textwidth]{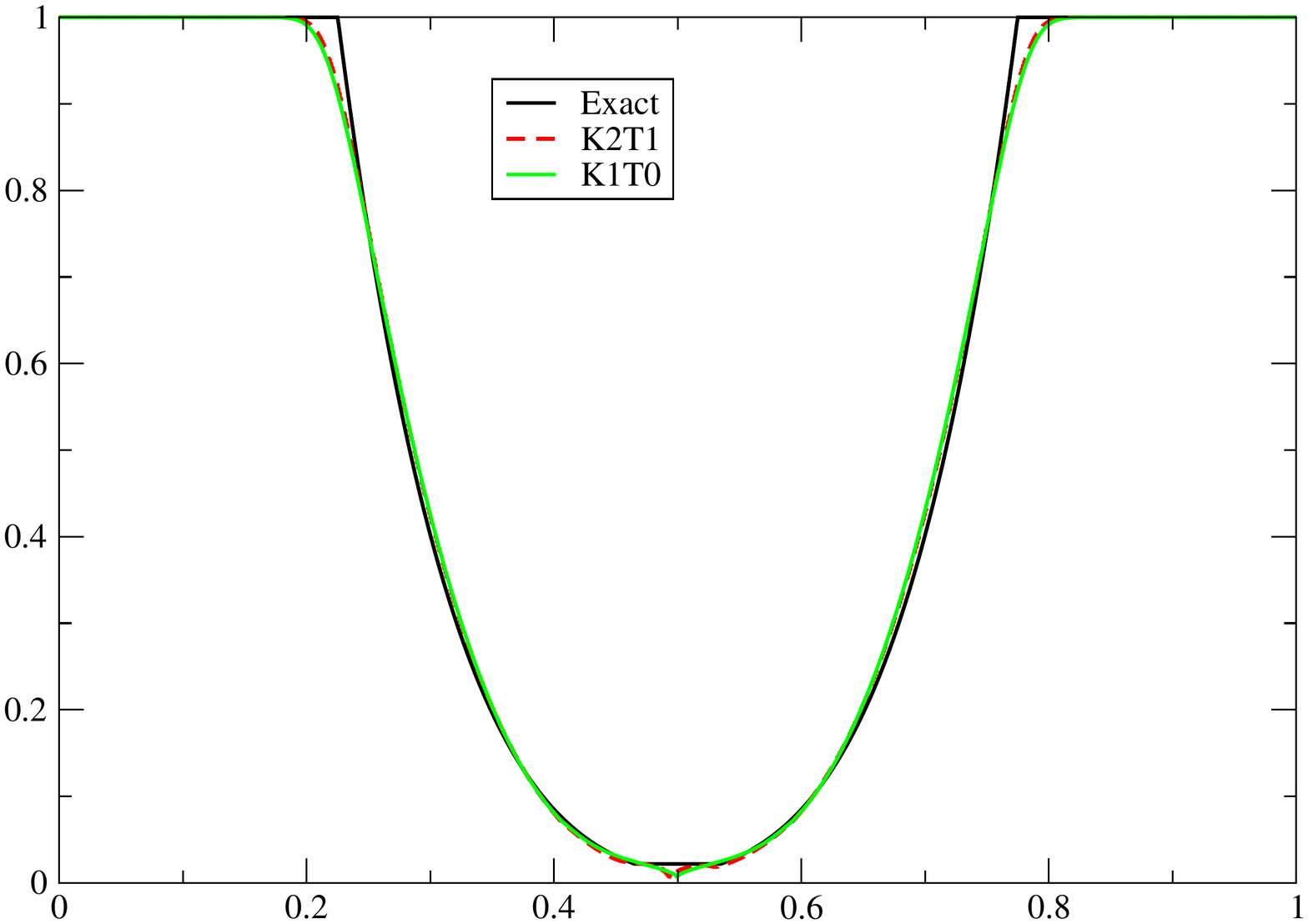}}
\subfigure[]{\includegraphics[width=0.45\textwidth]{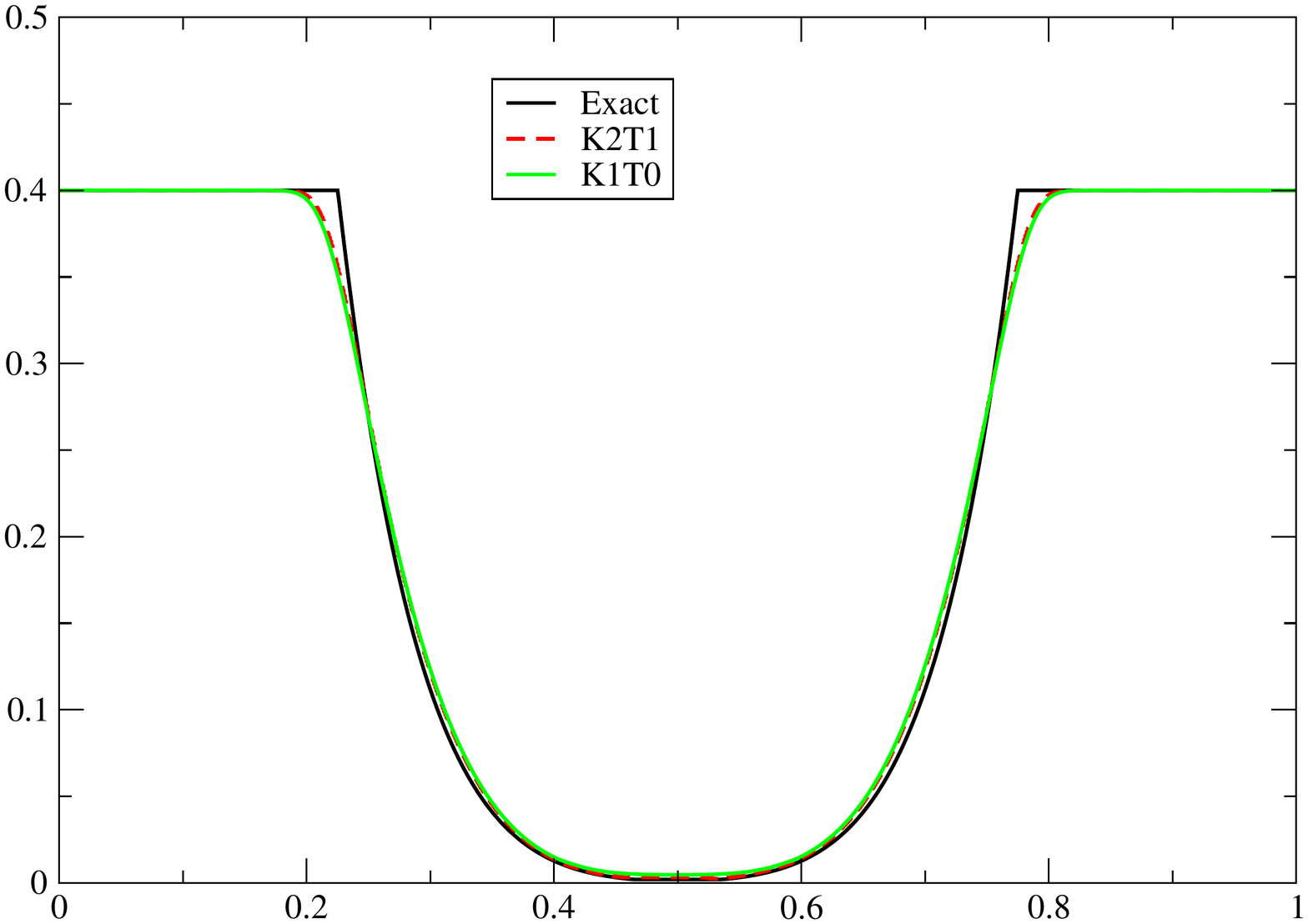}}
\subfigure[]{\includegraphics[width=0.45\textwidth]{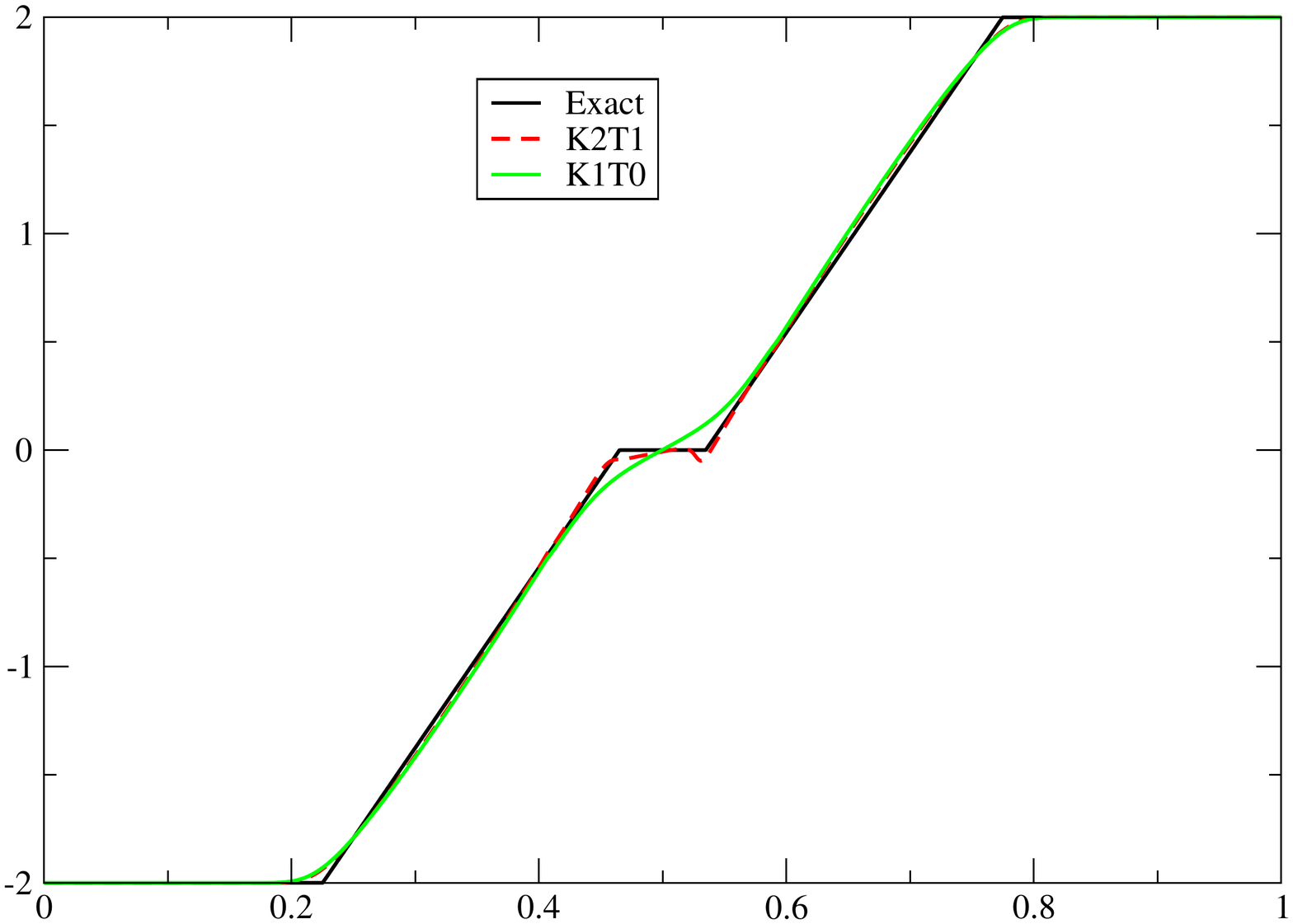}}
\end{center}

\caption{Solution of the $123$-problem for (a) density, (b) velocity and (c) pressure at time $T = 0.15$ with $CFL = 0.4$. In each figure, the exact solution and the solutions obtained with K2T1 and K1T0 are depicted.}
\label{123problem2}\end{figure}

\subsubsection{Severe test case}
The solution of the next test case consists of three strong discontinuities traveling to the right. The initial data consists of two constant states:
\begin{equation}\label{Collela1}
(\rho_0, u_0, p_0) = \left\{\begin{array}{ccc}
(5.99924,19.5975,460.894), & \text { if  }& 0.0\le x<0.8,
\\
(5.992420,-6.19633,46.0950), & \text { if  } & 0.8<x\le 1.0.
\end{array}\right. 
\end{equation}
This is one of the two test cases designed in \cite{MR2731357} which correspond to wave interaction in Collela and Woodward bast waves test case.
The exact and numerical solutions are found in the spatial domain $0 \le x \le 1$. The numerical solution is computed with $1000$ cells and the chosen Courant number coefficient is $0.1$. Boundary conditions are transmissive. The results for the density, velocity and pressure compared to the exact solution are shown in Figure \ref{interaction_blast} .

\begin{figure}
\begin{center}
\subfigure[]{\includegraphics[width=0.45\textwidth]{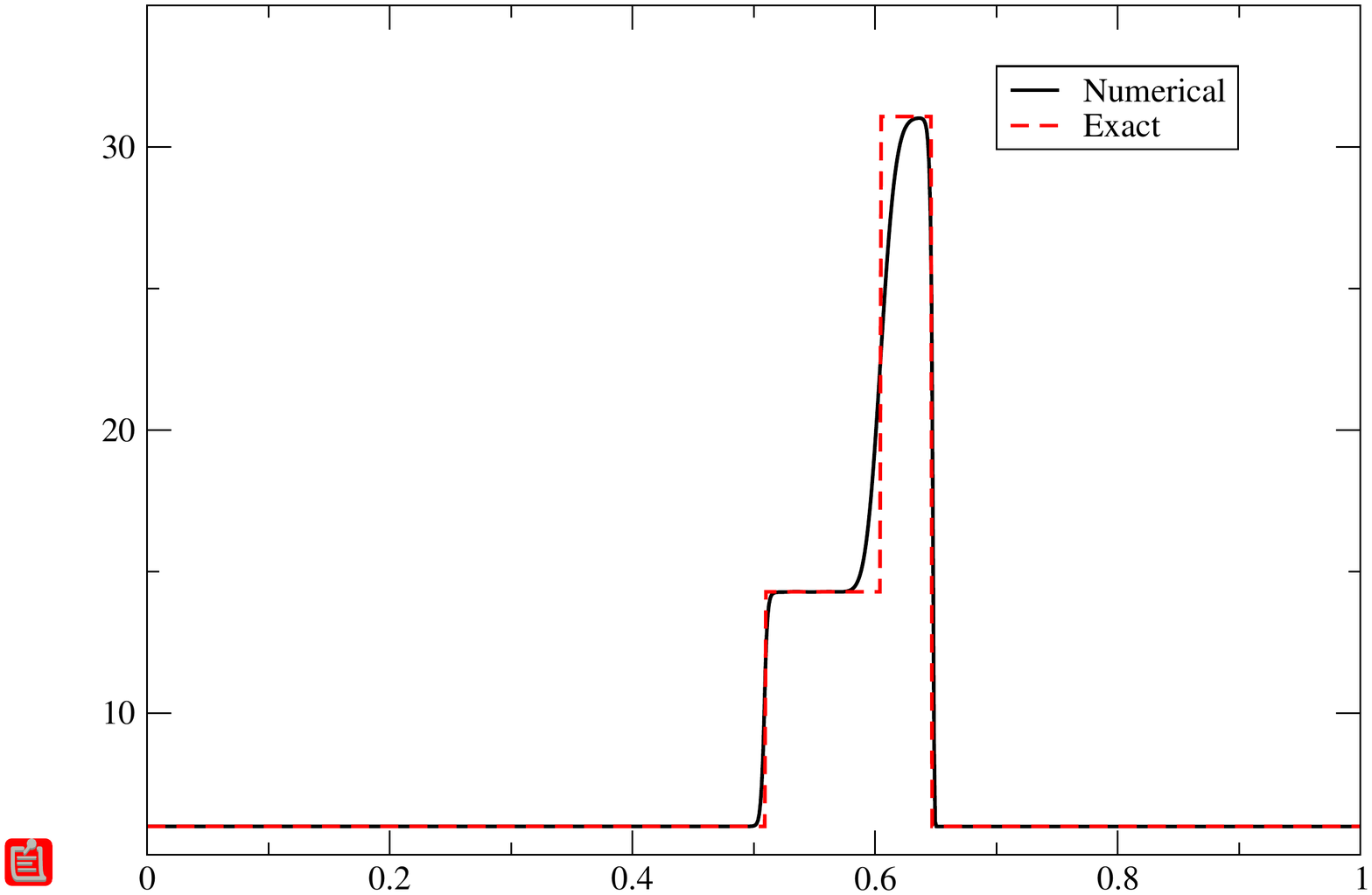}}
\subfigure[]{\includegraphics[width=0.45\textwidth]{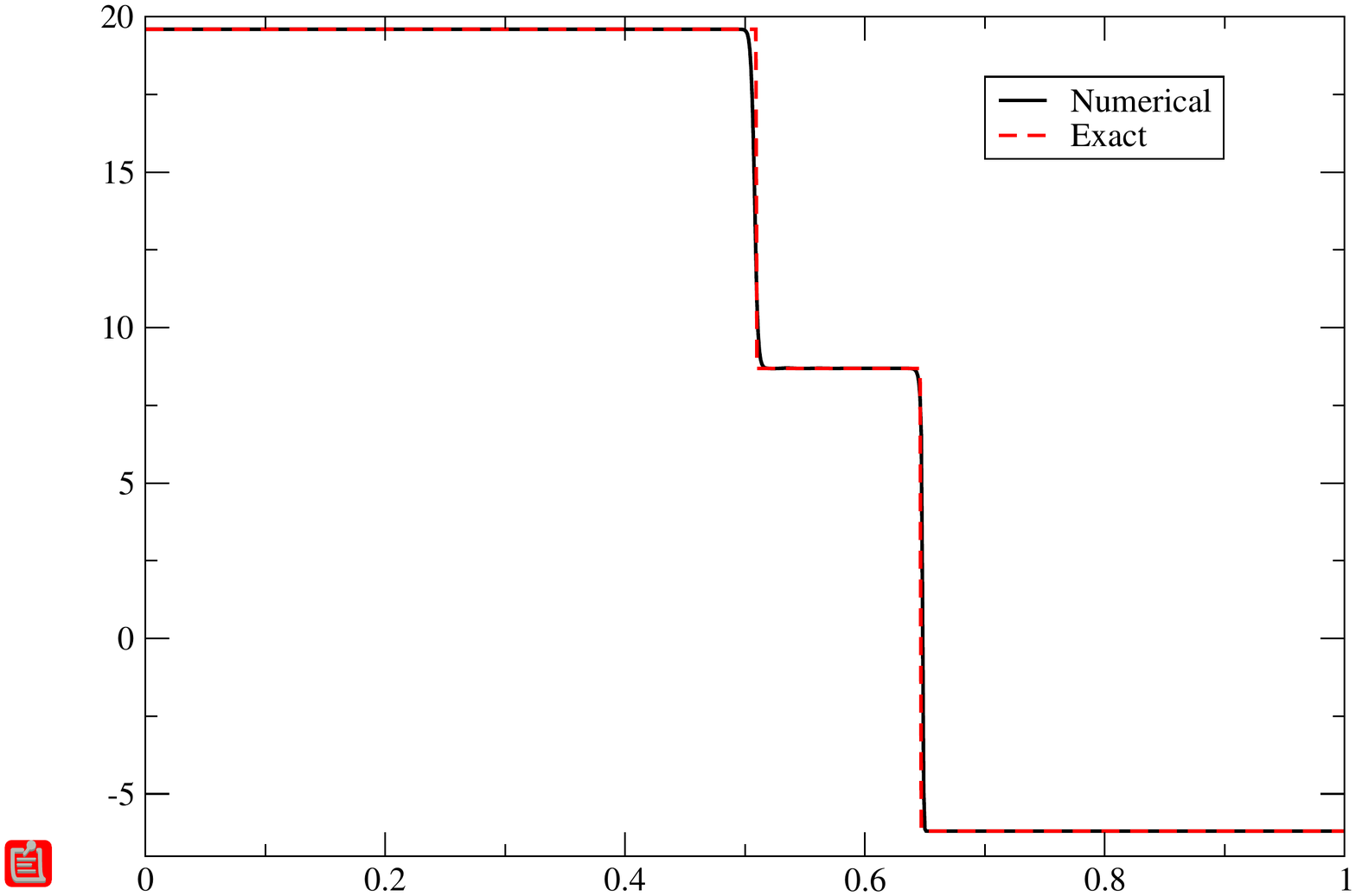}}
\subfigure[]{\includegraphics[width=0.45\textwidth]{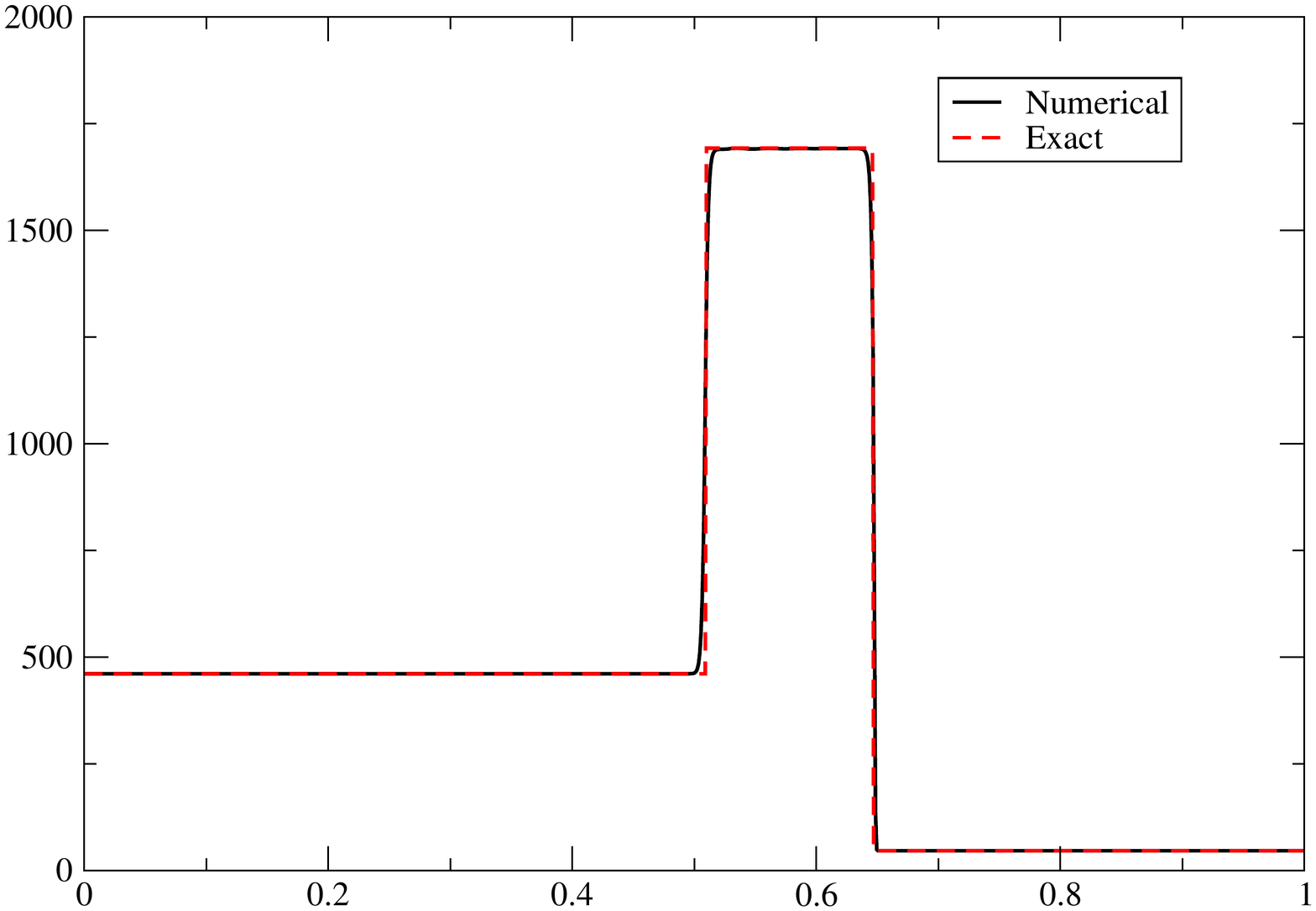}}
\end{center}

\caption{Exact and numerical solutions of the Collela and Woodward test case for (a) density, (b) velocity and (c) pressure at time $T = 0.012$ with $CFL = 0.1$ computed on a mesh with $1000$ points.}
\label{interaction_blast}\end{figure}

\subsubsection{2D test case}
{The scheme is a straightforward extension of the one dimensional one.}
In this test case we look at a 2D shock propagation in the domain $[0,1]\times [0,1]$. The initial data is given by:
\begin{equation}\label{Collela}
(\rho_0, u_0, v_0, p_0) = \left\{\begin{array}{ccc}
(1,0,0,1), & \text { if  }& ||x||<0.25,
\\
(0.125,0,0,0.1), & \text { if  } & ||x||\geq 0.25.
\end{array}\right. 
\end{equation}
The solutions for the density, velocity and pressure computed with the K1T0 first order-scheme as well as the used triangular mesh can be found in Figure \ref{2D}. It shows that the staggered scheme works in a stable way also in higher dimensions. By comparing the staggered scheme with the solution obtained by a conservative scheme in Figure \ref{2D2} we demonstrate that the shock front is correctly resolved.

{\color{black}We have also displayed in Figure \ref{fig:conservation} the relative error of conservation, i.e
$$
 \int_{\Omega} \bm_x^n\; d\bx, \quad \int_{\Omega} \bm_y\; d\bx, \text{ and } \dfrac{\int_\Omega E^n \; d\bx}{\int_\Omega E^0\; d\bx}-1.$$ There is no question on the density, since we use a dG scheme for that variable.
 We see that the errors are negligible. They also are independent of the choice of the quadrature formula  (not shown)
used to compute the parameters  needed in \eqref{numer:u} and \eqref{numer:e}.}
\begin{figure}
\begin{center}
\subfigure[]{\includegraphics[width=0.45\textwidth]{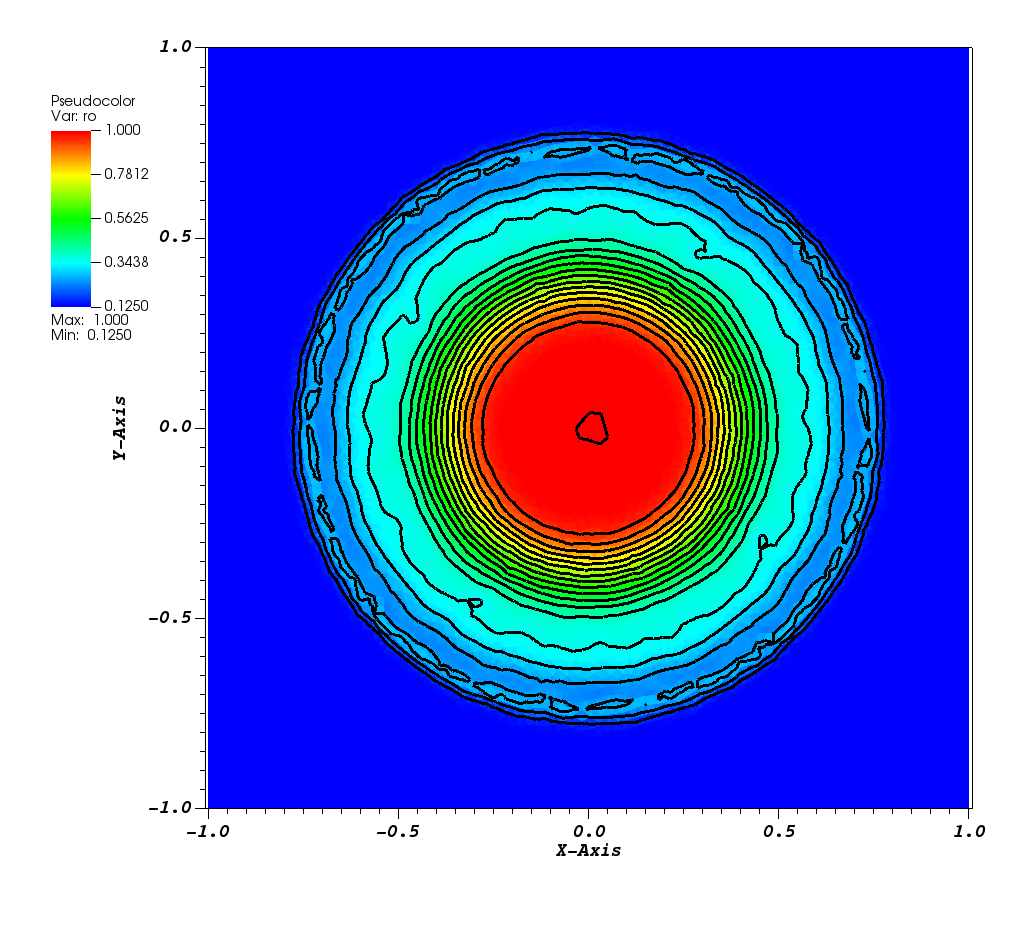}}
\subfigure[]{\includegraphics[width=0.45\textwidth]{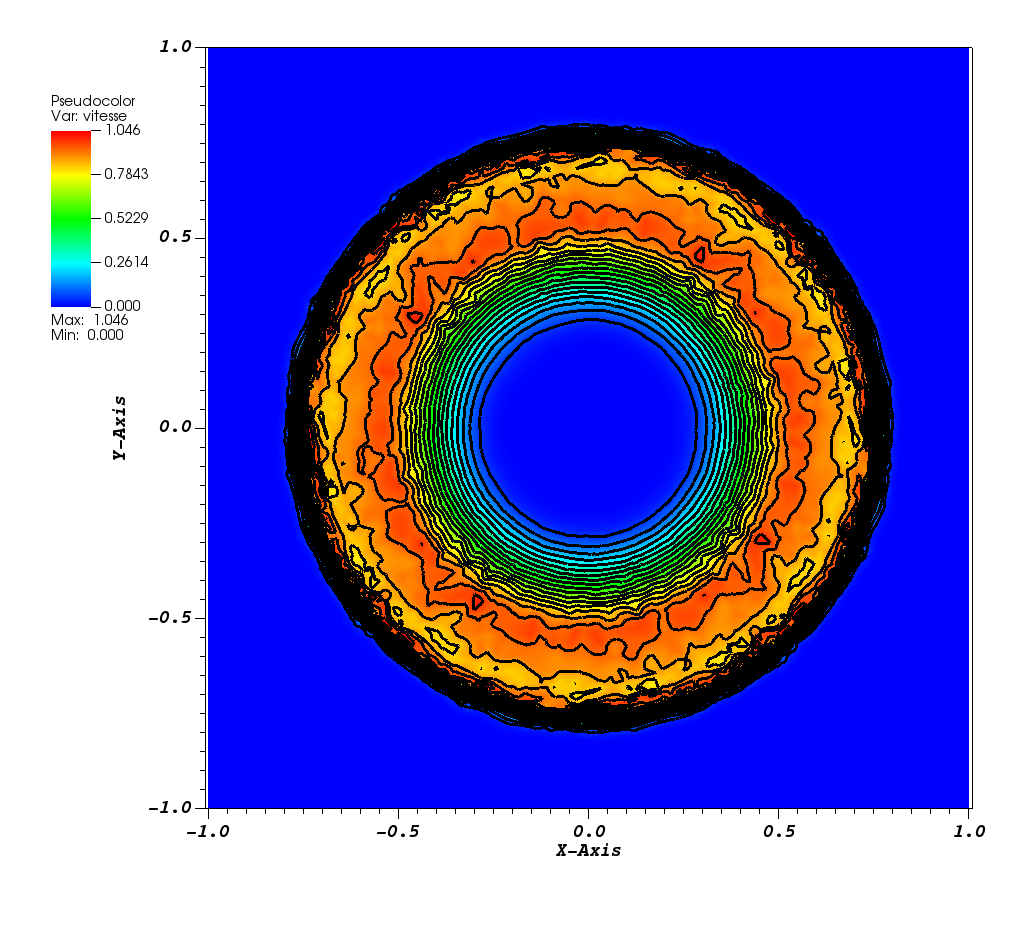}}

\subfigure[]{\includegraphics[width=0.45\textwidth]{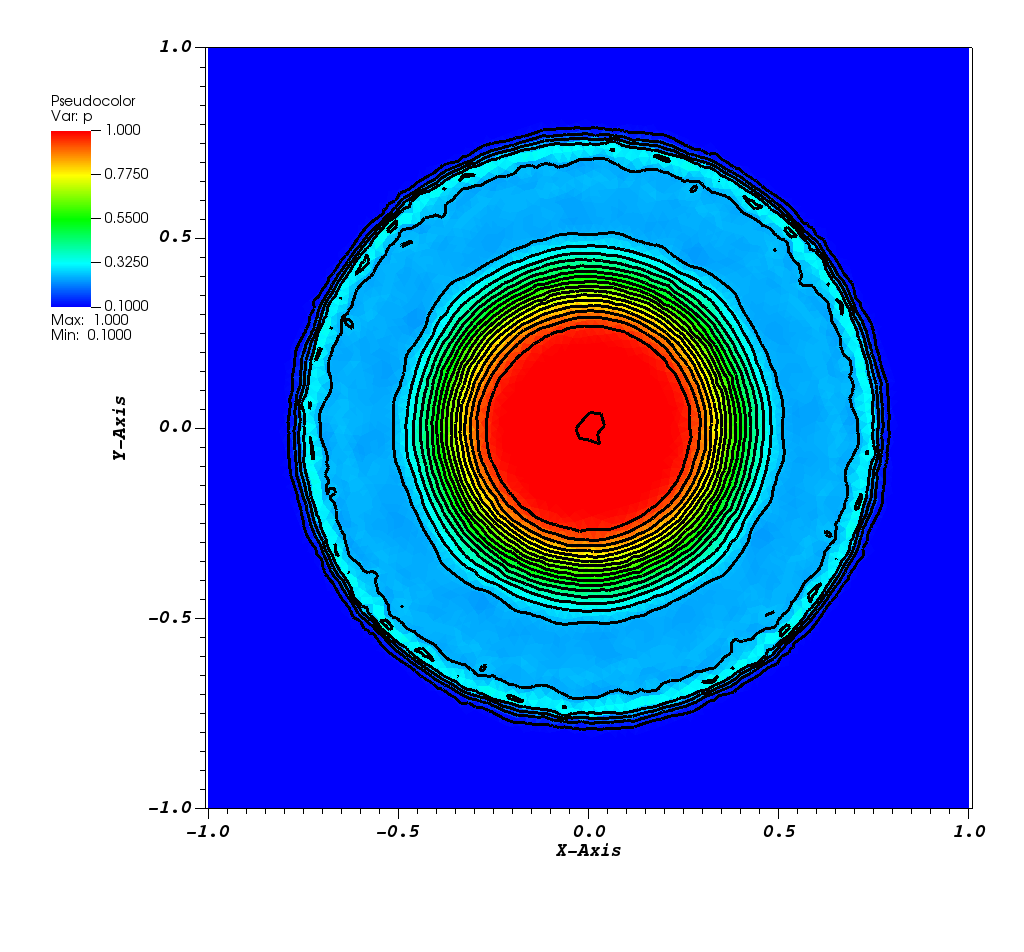}}
\subfigure[]{\includegraphics[width=0.45\textwidth]{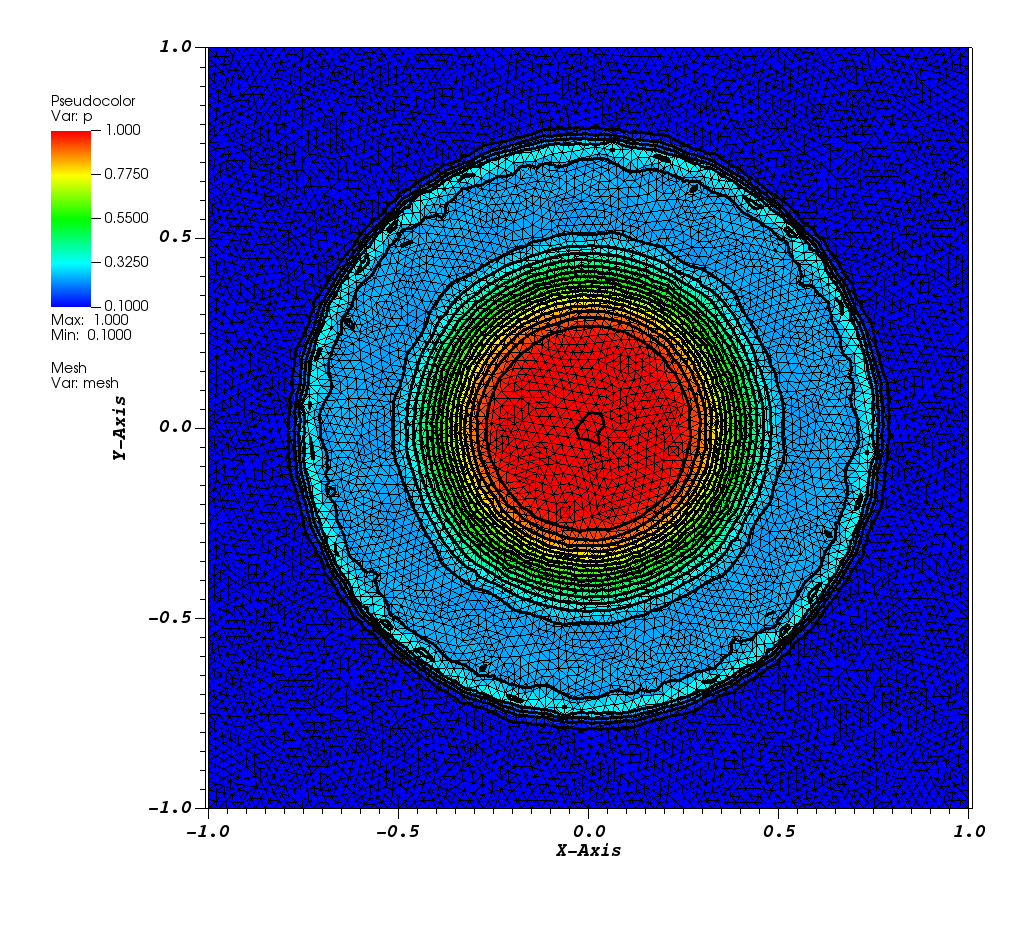}}
\end{center}
\caption{Numerical solutions of the 2D shock test case for (a) density, (b) vertical velocity and (c) pressure at time $T = 0.16$ computed on a mesh consisting of triangles which is depicted for the pressure in (d).}
\label{2D}\end{figure}

\begin{figure}
\begin{center}
{\includegraphics[width=0.45\textwidth]{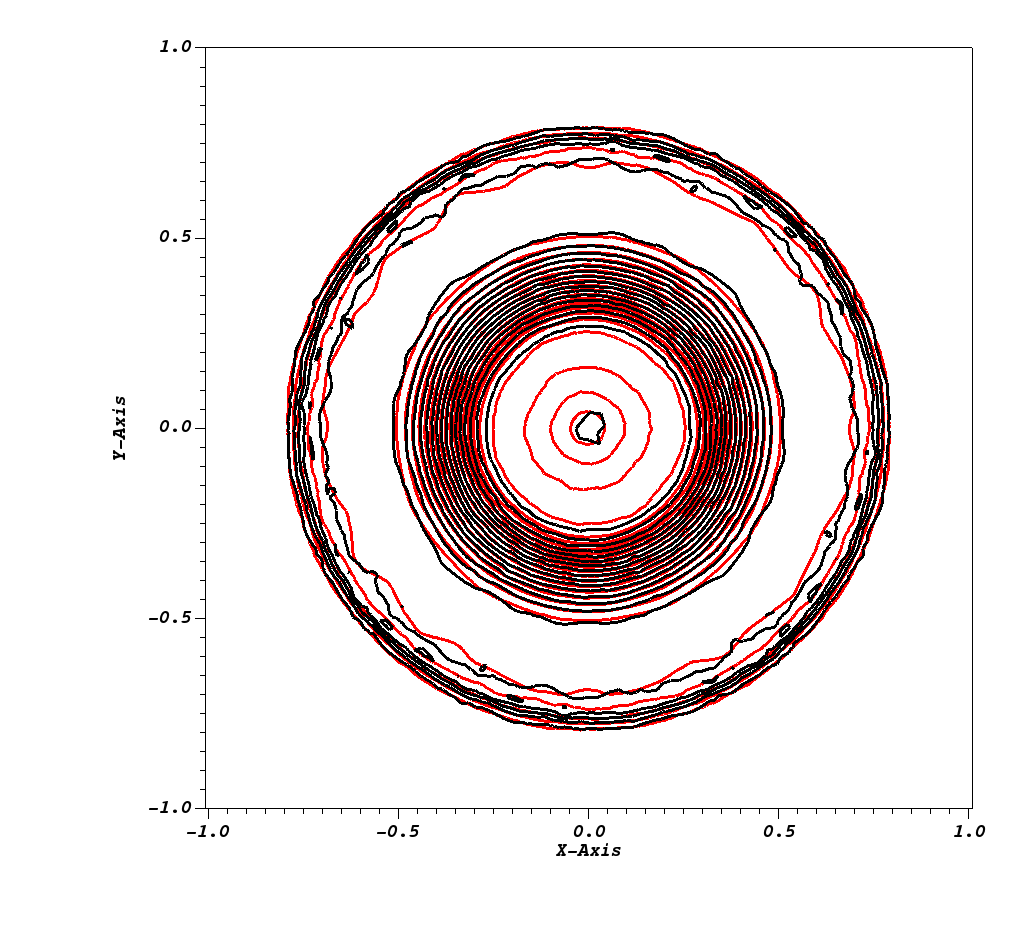}}
\end{center}
\caption{Comparison of the solutions of the 2D shock test case for the pressure obtained by a conservative scheme (red) and the staggered one (black) at time $T = 0.16$.}
\label{2D2}\end{figure}

\begin{figure}
\begin{center}
{\includegraphics[width=0.45\textwidth]{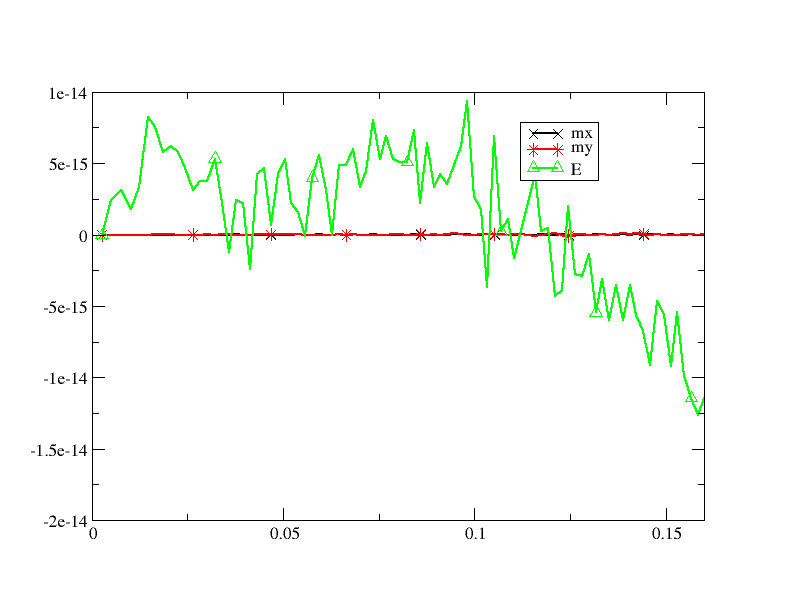}}
\end{center}
\caption{Conservation errors, in time.}
\label{fig:conservation}\end{figure}

{\subsection{Some remarks concerning the polynomial orders of the velocity and the thermodynamics parameters}
In all the simulations, we have made the choice of polynomials of degree $p$ for the thermodynamic parameter, and $p+1$ for the velocity one, but there is no justification except a rule of the thumb inspired  of what is done for incompressible flows. We have encountered stability problems in the case of equal degree polynomials. We discuss this in one dimension, the two dimensional case has not been explored. More precisely, when computing the quantities $p^\star$ for the velocity update, and the mass flux, several choices can be made in addition to the polynomial orders:
\begin{itemize}
\item We can take a centerred flux (arithmetic average between the left and right states), and $p^\star$ is also the arithmetic average. This will be the "centerred" choice.
\item We can compute the flux using the exact Godunov solver, as well as for $p^\star$. This will be the "exact" choice,
\item We can compute these quantities using the HLLC methodology. This will be the "HLLC" choice.
\end{itemize}
In the figure \ref{LBB} we report the results on the density for the smooth case of section \ref{sec:smooth} and time $t=0.025$. We see that all the combinations in polynomial orders with the exact, HLLC choices  are stable. The combination velocity with degree 1 and thermodynamics with degree 0 is also stable, while with linear  velocity and thermodynamics the scheme is not stable. This is why we have chosen $t=0.0025$ because soon after the code blows up. Only the polynomial degrees and the flux have  been changed, all of the other elements of the schemes remain the same. No limiting has been used, and the time accuracy is only first order.
\begin{figure}[h]
\begin{center}
\subfigure[]{\includegraphics[width=0.45\textwidth]{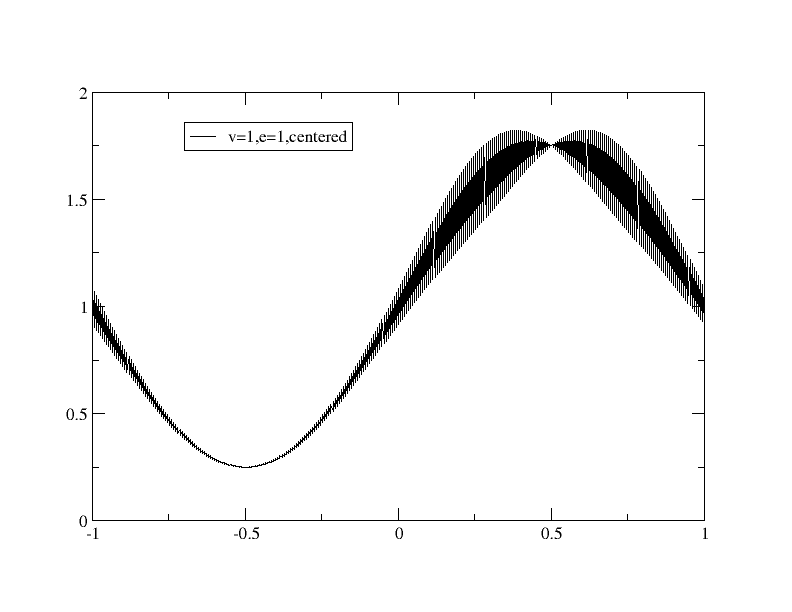}}
\subfigure[]{\includegraphics[width=0.45\textwidth]{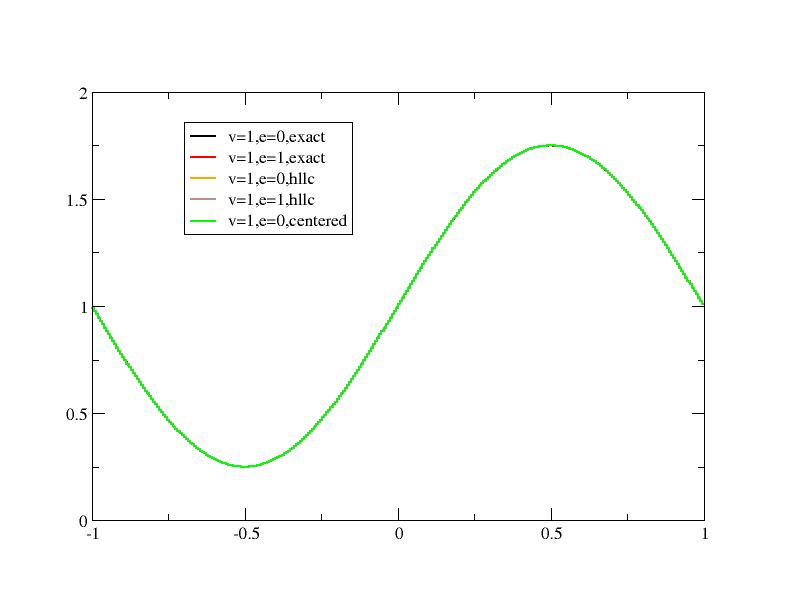}}
\end{center}
\caption{\label{LBB} (b) linear velocity, piecewise constant thermodynamics, (b) all the other cases.}
\end{figure}
 We have no mathematical explanation, only heuristic ones. We conjecture that for a centerred approximation, which is the closest with what is done for incompressible flows, we suffer of a kind of LBB stability problem. This stability problem is cured because of some "upwinding" mechanism with the exact and HLLC solver. 
 
  In all the other simulations, the velocity has been one degree more that the thermodynamics, and we take the exact or the HLLC solver, for security.}

\section{Conclusions}
In this paper, we have proposed a method to construct staggered high order schemes for compressible flows. The method has been illustrated on a staggered higher-order Resi\-dual Distribution (RD) scheme for compressible flow, but as explained in the text, this is not restricted to this particular class of schemes.The key elements are (i) a reformulation of the local conservation properties at the level of elements, and not faces as it is classically done, (ii) that the type stepping method is obtained by a combination of Euler forward steps, but this is more general than, for example SSP Runge Kutta: Defect correction methods can also be used. 

One of the contributions of this paper is to show how one can discretise a non-conservative version of the Euler equations of gas dynamics in Eulerian form and guarantee that the correct weak solutions are recovered. A series of classical problems considered in this paper show the accuracy and robustness of the proposed numerical scheme. The  scheme we have developed provides an accurate numerical approximation and the correction we have defined is effective. In addition, for solutions with shock, the scheme is parameter-free and does not require any artificial viscosity. 

Let us write a series of remarks to conclude this paper.
\begin{enumerate}
\item Though the numerical examples are mostly one-dimensional (because in this case one can compute the exact solution for comparison), the description of the correction introduced in Section \ref{howto}, as well as the conditions introduced in Proposition \ref{LxW} are formulated for general elements. 
\item The Residual Distribution formalism introduced here is not restrictive. In \cite{MR4090481}, it is shown that any classical scheme (Finite Volume, Finite Element, discontinuous Galerkin) can be rewritten equivalently in distribution form. If one approximates (for example) the velocity equation with another method, it is certainly possible to write the contribution at element level, as here, and then to rewrite the scheme in the semi-discrete form \eqref{semi_forw_Euler} (if first-order accuracy in time is chosen), 
or more general for higher in time approximation. Then, the key fact is to write the local conservation property, not at the level of faces between elements, but on the elements themselves: this is what is behind the proof of Proposition \ref{LxW}, thus  corrections  of the form \eqref{1} and \eqref{2} can be written. What is not guaranteed is that the modified scheme will still be stable. In all our experience, we have not see any degradation of the stability condition. We have used this type of  correction in other context, see e.g \cite{entropy,paola,abgrall2021reinterpretation}, and the conclusions are the same. This is however not a proof.
\end{enumerate}
Further investigations of high order Residual Distribution schemes and applications to different mathematical models will be considered in forthcoming works. 

\section*{Acknowledgements}
 I thank Dr. Bettina Wieber  for her very constructive comments. I am also grateful to Dr. Ksenya Ivanova  during her stay at I-Math for our discussions on this problem. \remi{I am also in debt with the two unknown reviewers that have led to drastic improvement with respect to the original submission.}

\appendix
\section{Short introduction to the Deferred Correction (DeC) approach}\label{appendix_dec}
We consider again a hyperbolic system in the form
\begin{equation}
\dfrac{\partial U}{\partial t} + \mathrm{L}(U) = 0
\end{equation} 
which we want to approximate with a high-order accurate scheme in time. To do so, we will use the Deferred Correction (DeC) approach. The aim of DeC schemes is to avoid implicit methods, without losing the high order of accuracy of a scheme. The high order method that we want to approximate will be denoted by $\mathcal{L}^2$. To use the DeC procedure, we also need another method, which is easy and fast to be solved with low order of accuracy which will be denoted by $\mathcal{L}^{1}$. The DeC algorithm is providing an iterative procedure that approximates the solution of the $\mathcal{L}^2$ scheme $U^*$ in the following way:

\begin{equation}\begin{array}{cc}
\mathcal{L}^1\left(U^{{n+1}}\right) = 0,
\end{array}
\end{equation} 

\begin{equation}\begin{array}{ccc}
\mathcal{L}^1\left(U^{(k)}\right) =\mathcal{L}^1\left(U^{(k-1)}\right) - \mathcal{L}^2\left(U^{(k-1)}\right) , & \text{with} & k = 2, .., K,
\end{array}\end{equation}
where $K$ is the number of iterations that we compute. We need as many iterations as the order of accuracy that we want to reach. We know from \cite{AbgrallTorlo2020}:

\begin{prop}\label{lemma}
Let $\mathcal{L}^1$ and $\mathcal{L}^2$ be two operators defined on $R^m$, which depend on the discretization scale $\Delta \sim \Delta x \sim \Delta t,$ such that
\begin{itemize}
\item $\mathcal{L}^1$  is coercive with respect to a norm, i.e., $\exists \alpha_1 > 0$ independent of $\Delta $, such that for any $U, V$ we have that 
$$\alpha_1 \norm{U - V} \le \norm{\mathcal{L}^1(U) - \mathcal{L}^1(V)},$$

\item $\mathcal{L}^1 - \mathcal{L}^2$ is Lipschitz with constant $\alpha_2 > 0$ uniformly with respect to $\Delta$, i.e., for any $U, V$
$$\norm{\left(\mathcal{L}^1(U) - \mathcal{L}^2(U)\right) - \left(\mathcal{L}^1(V) - \mathcal{L}^2(V)\right)} \le \alpha_2\Delta\norm{U- V}.$$ 

We also assume that there exists a unique $U^*_{\Delta}$ such that $\mathcal{L}^2(U^*_{\Delta}) = 0.$ Then, if $\eta := \dfrac{\alpha_2}{\alpha_1}\Delta < 1,$ the DeC is converging to $U^*_{\Delta}$ and after $k$ iterations the error $\norm{U^{(k)} - U^*_{\Delta}}$ is smaller than $\eta^k\norm{U^{{n}} - U^*_{\Delta}}$.
\end{itemize}
\end{prop}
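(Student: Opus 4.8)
The plan is to recognize the DeC iteration as a fixed-point iteration and to show that it contracts toward $U^*_\Delta$, exactly in the spirit of the Banach fixed-point theorem. The crucial preliminary observation is that the target $U^*_\Delta$, which satisfies $\mathcal{L}^2(U^*_\Delta)=0$ by hypothesis, trivially obeys the identity $\mathcal{L}^1(U^*_\Delta)=\mathcal{L}^1(U^*_\Delta)-\mathcal{L}^2(U^*_\Delta)$; that is, it is a fixed point of the map implicitly defined by the correction step. This is what makes a clean comparison between a generic iterate and the limit possible.

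First I would subtract this identity from the $k$-th correction step $\mathcal{L}^1(U^{(k)})=\mathcal{L}^1(U^{(k-1)})-\mathcal{L}^2(U^{(k-1)})$, which produces
$$\mathcal{L}^1(U^{(k)})-\mathcal{L}^1(U^*_\Delta)=\big(\mathcal{L}^1(U^{(k-1)})-\mathcal{L}^2(U^{(k-1)})\big)-\big(\mathcal{L}^1(U^*_\Delta)-\mathcal{L}^2(U^*_\Delta)\big).$$
Next I would bound the two sides separately. The left-hand side is controlled from below by coercivity, namely $\alpha_1\norm{U^{(k)}-U^*_\Delta}\le\norm{\mathcal{L}^1(U^{(k)})-\mathcal{L}^1(U^*_\Delta)}$, whereas the right-hand side is precisely a difference of the form to which the uniform Lipschitz hypothesis on $\mathcal{L}^1-\mathcal{L}^2$ applies, yielding the upper bound $\alpha_2\Delta\norm{U^{(k-1)}-U^*_\Delta}$. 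Chaining the two estimates gives the one-step contraction $\norm{U^{(k)}-U^*_\Delta}\le\eta\,\norm{U^{(k-1)}-U^*_\Delta}$ with $\eta=\tfrac{\alpha_2}{\alpha_1}\Delta$.

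Finally, I would close the argument by induction on $k$, starting from the initial guess $U^{(0)}=U^n$, to obtain $\norm{U^{(k)}-U^*_\Delta}\le\eta^k\norm{U^n-U^*_\Delta}$; since $\eta<1$ by assumption, the iterates converge geometrically to $U^*_\Delta$. There is no genuine obstacle here, since the structure is that of a standard contraction estimate. The one point worth flagging is that coercivity is used in place of an explicit invertibility of $\mathcal{L}^1$: the lower bound $\alpha_1\norm{U-V}\le\norm{\mathcal{L}^1(U)-\mathcal{L}^1(V)}$ is exactly what allows one to pass from a bound on the difference of the $\mathcal{L}^1$-images back to a bound on the difference of the arguments, without ever having to solve for an inverse of $\mathcal{L}^1$.
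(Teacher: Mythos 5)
Your proof is correct and is exactly the standard contraction argument for this result; the paper itself does not reprove it but simply cites \cite{AbgrallTorlo2020}, where the argument is precisely the one you give (subtract the identity $\mathcal{L}^1(U^*_\Delta)=\mathcal{L}^1(U^*_\Delta)-\mathcal{L}^2(U^*_\Delta)$, bound below by coercivity and above by the uniform Lipschitz estimate, then iterate from $U^{(0)}=U^n$). No gaps.
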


\bigskip
Following the proceeding in Section \ref{discussion}, we get for a second-order DeC scheme:
\begin{equation*}
\Phi^\bu_{\sigma_\mathcal{V},K}=\frac{1}{2}\big ( \Phi^\bu_{\sigma_\mathcal{V},K}(U^{(k)})+\Phi^\bu_{\sigma_\mathcal{V},K}(U^{{n}})
\big ), \quad \Phi^\rho_{\sigma_{\mathcal{E}},K}=\frac{1}{2}\big ( \Phi^\rho_{\sigma_\mathcal{E},K}(U^{(k)})+\Phi^\rho_{\sigma_\mathcal{E},K}(U^{{n}})\big ).
\end{equation*}
The calculations can also be immediately extended to higher accuracy in time by modifying the above half sums.

{ \section{Proof of Proposition \ref{LxW}}\label{LxWthm}
We first show some estimates for scalar functions (the system case is identical), and then we use them to show Proposition \ref{LxW}. 
We start with some notations: $\R^d$ is subdivided into non-overlapping elements,
$$\R^d=\cup K$$ 
and the mesh is supposed to be conformal (because of the global continuity of the velocity). The parameter $h$ will be the maximum of the diameters of the $K$.  We assume that the partition is shape regular, i.e. the ratio between the inner and outer diameter of the elements is bounded from above and below. In $\R^d$,  we have a functional description of the density, the velocity and the energy: we call them $\rho_h$, $\bu_h$ and $e_h$ to refer they are defined from $\R^d=\cup K$.

 Let $T>0$ and let $0<t_1<\ldots <t_n<\ldots <t_N\leq T$ be a time discretisation of $[0,T]$.  We define $\Delta t_n=t_{n+1}-t_n$ and $\Delta t=\max\limits_n \Delta t_n$.  We are given the sequences $\{u_h^p\}^{p=0\ldots N}$, where $u_h^p$ belongs to $V^h$ or $W^h$ (see Section \ref{2.2}). They are defined from degrees of freedom that are again denoted by $\sigma$.  We can define  a function $u_{\Delta}$ by:
 $$\text{ if }(\bx,t)\in \Omega\times [t_{n}, t_{n+1}[, \text{ then } u _\Delta(\bx,t)=u_h^n(\bx).$$
 The set of these functions is denoted by $X _{\Delta}$ and is equipped with the $L^\infty$ and $L^2$ norms.

We then have the following lemma:
\begin{lemma}\label{weakBV}
Let $T>0$ , $\{t_n\}_{n=0, \ldots , N}$ an increasing  subdivision of $[0,T]$ and $\QQ$ a compact subset of $\R^d$. Let furthermore $(u _\Delta)_{h}$ denote a sequence of functions of $X _\Delta$ defined on $\R^d\times \R^+$. We assume that there exists $C\in \R$ independent of $\Delta$ and $\Delta t$, and $\bu\in L^2_{loc}(\Omega\times [0,T])$ such that
$$\sup\limits _\Delta\sup\limits_{\bx,t}\vert u _\Delta(\bx,t)\vert \leq C\quad \text{ and }\quad \lim\limits _{\Delta,  \Delta t\rightarrow 0}\vert u _\Delta-u\vert_{L^2(\Omega\times[0,T])}=0.$$
 Then, if $\overline{(u_h^n)}_K$ is the average of $u_h^n$ in $K$, we have
  \begin{equation}\label{appendix:1}\lim\limits_{h\rightarrow 0,\Delta t\rightarrow 0} \bigg (\sum\limits_{n=0}^N \Delta t _n\sum\limits_{K\subset \QQ} 
|K|\sum\limits_{\sigma
\in K} \norm{(u_h)_\sigma-\overline{(u_h)}_K)}\bigg )=0.\end{equation}
\end{lemma}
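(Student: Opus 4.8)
The plan is to collapse the triple sum in \eqref{appendix:1} into a single $L^2$ quantity, namely the distance between $u_\Delta$ and its piecewise-constant (cell-average) projection, and then to show that this quantity tends to $0$. The first and central step is a purely local estimate on each element: the nodal oscillation is controlled by the $L^2$ oscillation inside the cell. Passing to the reference simplex $\hat K$ through the affine map $F_K$, the nodal values and the cell average are left invariant, while the space of polynomials of degree $\leq k$ is finite-dimensional. On that space the two seminorms $v\mapsto \sum_{\hat\sigma}\abs{v_{\hat\sigma}-\bar v}$ and $v\mapsto\norm{v-\bar v}_{L^2(\hat K)}$ vanish exactly on constants, hence are equivalent; shape regularity is what makes the equivalence constant (and the number of degrees of freedom per element) uniform over the whole family of meshes. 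Keeping track of the constant Jacobian, this yields, for each time level $n$,
$$\sum_{\sigma\in K}\norm{(u_h^n)_\sigma-\overline{(u_h^n)}_K}\leq C\,|K|^{-1/2}\,\norm{u_h^n-\overline{(u_h^n)}_K}_{L^2(K)},$$
with $C$ independent of $K$ and of the mesh.

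Next I would assemble this into the full sum by two Cauchy--Schwarz applications. Multiplying by $|K|$ gives $|K|\sum_{\sigma}\norm{\cdots}\leq C\,|K|^{1/2}\norm{u_h^n-\overline{(u_h^n)}_K}_{L^2(K)}$; summing over $K\subset\QQ$ and using $\sum_{K\subset\QQ}|K|\leq|\QQ|$ bounds the spatial sum at level $n$ by $C\,|\QQ|^{1/2}\big(\sum_K\norm{u_h^n-\overline{(u_h^n)}_K}^2_{L^2(K)}\big)^{1/2}$. Weighting by $\Delta t_n$, summing over $n$, and applying a second Cauchy--Schwarz in time with $\sum_n\Delta t_n\leq T$, the entire left-hand side of \eqref{appendix:1} is bounded by
$$C\,|\QQ|^{1/2}T^{1/2}\Big(\sum_{n}\Delta t_n\sum_{K\subset\QQ}\norm{u_h^n-\overline{(u_h^n)}_K}^2_{L^2(K)}\Big)^{1/2}.$$

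The inner double sum is exactly $\norm{u_\Delta-\Pi u_\Delta}^2_{L^2(\QQ\times[0,T])}$, where $\Pi$ denotes the $L^2$-orthogonal projection onto functions that are piecewise constant in space on the mesh and piecewise constant in time on the grid. Since $I-\Pi$ is again an orthogonal projection, of norm at most $1$, I would estimate $\norm{(I-\Pi)u_\Delta}\leq\norm{u_\Delta-u}+\norm{(I-\Pi)u}=\norm{u_\Delta-u}+\norm{u-\Pi u}$. The first term vanishes as $\Delta,\Delta t\to 0$ by the $L^2$ convergence hypothesis, and the second is the best piecewise-constant approximation error of $u$, which tends to $0$ because piecewise constants are dense in $L^2(\QQ\times[0,T])$ along the refining mesh family. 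Combining these with the two bounds above proves \eqref{appendix:1}; the uniform $L^\infty$ bound is only needed to keep the averages and the limit safely in $L^2_{loc}$, the driving mechanism being the $L^2$ convergence.

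I expect the crux to be the local scaling estimate, and in particular securing the exact factor $|K|^{-1/2}$ together with an equivalence constant that is genuinely uniform across the shape-regular family: this is the one place where shape regularity is indispensable. The rest is routine bookkeeping — the two Cauchy--Schwarz steps and the recognition of the double sum as a projection error — with the only further analytic ingredient being the density of piecewise constants in $L^2$, which forces the approximation term $\norm{u-\Pi u}$ to zero.
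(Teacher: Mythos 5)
Your proposal is correct. Note, however, that the paper does not actually prove Lemma~\ref{weakBV}: it only records that the argument is "inspired from Kr\"oner" and "can be found in Abgrall--Roe", so you are supplying a self-contained proof where the paper defers to the literature. Your route --- the local inverse/equivalence-of-norms estimate $\sum_{\sigma\in K}\norm{(u_h^n)_\sigma-\overline{(u_h^n)}_K}\leq C\,|K|^{-1/2}\norm{u_h^n-\overline{(u_h^n)}_K}_{L^2(K)}$ on the reference element, two Cauchy--Schwarz steps, and the identification of the resulting quantity with $\norm{(I-\Pi)u_\Delta}_{L^2}$, killed by the triangle inequality $\norm{(I-\Pi)u_\Delta}\leq\norm{u_\Delta-u}+\norm{u-\Pi u}$ --- is exactly the mechanism that drives the cited proofs, and the scaling factor $|K|^{-1/2}$ is right ($\norm{\hat v-\bar{\hat v}}_{L^2(\hat K)}=(|\hat K|/|K|)^{1/2}\norm{v-\bar v}_{L^2(K)}$ under the affine map). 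Two small remarks. First, in this paper the degrees of freedom $(u_h)_\sigma$ are B\'ezier (Bernstein) coefficients rather than Lagrange point values; your norm-equivalence step survives unchanged because the coefficient map is still an affine-invariant linear isomorphism of $\P^k(\hat K)$ and, by the partition of unity, the seminorm $\sum_\sigma\abs{v_\sigma-\bar v}$ still vanishes exactly on constants --- but you should say "coefficients" rather than "nodal values". Second, your observation that the uniform $L^\infty$ bound is not actually needed for this lemma (only the $L^2$ convergence and the density of piecewise constants enter) is accurate and is a genuine, if minor, sharpening of the hypotheses as stated.
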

\begin{proof}
The proof is inspired from \cite{Kroner:96} and can be found in \cite{AbgrallRoe}.
\end{proof}

Now we have all the prerequisites for proving Proposition \ref{LxW}. We will perform the proof for the momentum since the proof for the energy is similar and can be done in a straightforward manner. We proceed the proof with several lemmas.
\begin{lemma}
Under the conditions of Proposition \ref{LxW}, for any $\varphi\in C_0^\infty(\R^d\times \R^+)$ we have 
\begin{equation*}
\begin{split}
\lim\limits_{\Delta t\rightarrow 0, \Delta\rightarrow 0
} \sum\limits_{n=0}^\infty\int_{\R^d} \varphi_h \big (\rho_h^{n+1}\bu_h^{n+1}&-\rho_h^n\bu_h^n\big ) \; d\bx\\
&=
-\int_{\R\times \R^+} \dpar{\varphi}{t} u \; dx dt+\int_\R \varphi(x,0) u_0\; dx dt,\end{split}\end{equation*}
where
$$\varphi_h(x, t_n)=\sum_K\varphi(x_K, t_k) 1_K\quad \text{ and }\quad \varphi_h(x,t)=\varphi(x, t_n) \text{ for } t\in [t_n,t_{n+1}[.$$
\end{lemma}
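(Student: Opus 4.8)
The plan is to prove the identity by a discrete integration by parts (Abel summation) in the time index, and then to identify, in the limit, the boundary terms with the initial datum and the interior sum with the time-derivative term. Write $m_h^n:=\rho_h^n\bu_h^n$ for the discrete momentum and $\varphi_h^n:=\varphi_h(\cdot,t_n)$. Since $\varphi_h^n$ is constant on each element $K$, equal to the centroid value $\varphi(x_K,t_n)$, every integral reduces to cell quantities, $\int_{\R^d}\varphi_h^n\,m_h^p\,d\bx=\sum_K\varphi(x_K,t_n)\int_K m_h^p\,d\bx$; in particular the momentum enters only through $\int_K m_h^p\,d\bx$, so no reconciliation between nodal and averaged values (and hence no appeal to Lemma \ref{weakBV}) is needed for this term.

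First I would perform the summation by parts,
\begin{equation*}
\sum_{n=0}^{\infty}\varphi_h^n\,\big(m_h^{n+1}-m_h^n\big)=-\sum_{n\ge 1}\big(\varphi_h^n-\varphi_h^{n-1}\big)\,m_h^n-\varphi_h^0\,m_h^0 .
\end{equation*}
Because $\varphi$ has compact support in time, the contribution as $n\to\infty$ vanishes; the boundary term at $n=0$ converges, using $m_h^0\to\rho_0\bu_0=u_0$ and $\varphi_h^0\to\varphi(\cdot,0)$, to the initial-datum integral $\int_{\R^d}\varphi(x,0)\,u_0\,dx$ appearing on the right-hand side; the interior sum remains to be treated.

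Second, I would recognise the interior sum as a space-time Riemann sum. By the smoothness of $\varphi$ together with shape regularity (which guarantees that sampling at the centroid is first-order accurate in $h$), one has, uniformly,
\begin{equation*}
\frac{\varphi_h^n-\varphi_h^{n-1}}{\Delta t_{n-1}}=\dpar{\varphi}{t}+O(h+\Delta t).
\end{equation*}
Denoting the left-hand ratio by $g_\Delta$ (piecewise constant on each space-time cell, so $g_\Delta\to\partial_t\varphi$ uniformly), the interior sum equals $-\int_0^T\!\!\int_{\R^d}g_\Delta\,m_\Delta^-\,d\bx\,dt$, where $m_\Delta^-$ is the backward-in-time piecewise-constant interpolant of $(m_h^n)_n$.

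Finally, I would pass to the limit. The two ingredients are: the product convergence $m_h\to\rho\bu=u$ in $L^2(\R^d\times[0,T])$, obtained from $\rho_h\to\rho$ and $\bu_h\to\bu$ in $L^2$ and the uniform $L^\infty$ bounds by splitting $\rho_h\bu_h-\rho\bu=\rho_h(\bu_h-\bu)+(\rho_h-\rho)\bu$ and bounding each factor; and the fact that $m_\Delta^-$, being the one-step time translate of $m_\Delta$, shares its limit $u$ by continuity of translations in $L^2$. Combining $g_\Delta\to\partial_t\varphi$ (uniformly, hence in $L^2$) with $m_\Delta^-\to u$ in $L^2$ yields $\int g_\Delta m_\Delta^-\to\int\partial_t\varphi\,u$, which is the desired term. \textbf{The main obstacle} is precisely this last step: justifying convergence of the product $\rho_h\bu_h$ and absorbing the one-step temporal shift between the test-function increment and the evaluation time of the momentum. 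Both are handled by the uniform $L^\infty$ bounds and $L^2$-translation continuity; they are the only genuinely technical points, the remainder being the routine Abel summation and Taylor expansion of the smooth $\varphi$.
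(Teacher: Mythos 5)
The paper does not actually prove this lemma: its entire ``proof'' is the sentence ``This is the classical lemma.'' Your write-up is precisely the classical argument being alluded to --- Abel summation in the time index, identification of the $n=0$ boundary term with the initial datum, recognition of the interior sum as a space-time Riemann sum for $\int \dpar{\varphi}{t}\,u$, and the product-convergence step $\rho_h\bu_h\to\rho\bu$ via the uniform $L^\infty$ bounds --- so it is a correct and complete filling-in of what the paper leaves implicit. You are also right that this term needs no appeal to Lemma~\ref{weakBV}, since only cell averages of the momentum enter. Two small points deserve attention. First, the sign of the boundary term: your own Abel summation correctly produces $-\varphi_h^0 m_h^0$, whose limit is $-\int_{\R^d}\varphi(\bx,0)u_0\,d\bx$; this is the sign required for consistency with the weak formulation $\int\!\!\int u\,\dpar{\varphi}{t}+\int\varphi(\cdot,0)u_0+\dots=0$, so the ``$+$'' in the statement is a typo of the paper that you echo in prose while your displayed formula gives the correct sign --- state the minus explicitly rather than matching the statement. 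Second, the appeal to ``$L^2$-translation continuity'' to conclude $m_\Delta^-\to u$ is only literally a translation when the time steps are uniform; for a general increasing subdivision one should either assume quasi-uniform steps (harmless here, since $\Delta t\le Ch$) or compare $\sum_n\big(\varphi_h^n-\varphi_h^{n-1}\big)m_h^n$ directly with $\int\!\!\int \dpar{\varphi}{t}\,m_\Delta$ using the smoothness of $\varphi$ and the uniform bound on $m_h^n$, which disposes of the one-step shift without any translation argument. Neither point changes the structure of your proof.
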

\begin{proof}
This is the classical lemma.
\end{proof}
\begin{proof}[Proof of Proposition \ref{LxW}]
We start from \eqref{master:1}
\begin{equation*}
\begin{split}
 \int_{\R^d}\psi(\bx,t) \; \big ( \rho^{n+1}\bu^{n+1}&-\rho^{n}\bu^{n}\big ) \; d\bx
\\
&+\Delta t_n \sum_K \psi_K^n \Bigg [ 
\sum_{\sigma_\mathcal{V}\in K} \omega^{\rho,n+1}_{\sigma_\mathcal{V}} \Phi_{\sigma_\mathcal{V},K}^\bu 
+\sum_{\sigma_{\mathcal{E}}\in K}\omega^{\bu,n,K}_{\sigma_\mathcal{V}}
\Phi^\rho_{\sigma_{\mathcal{E}},K}
 \Bigg ]\\
&\qquad \qquad \qquad+ \Delta t_n 
\sum_{K}\bigg ( {F}_K(\bu^{n})+\sum_{\sigma_\mathcal{V}\in K}D_{\sigma_\mathcal{V}}(\bu^{n})\bigg )=0,
\end{split}
\end{equation*}
and by using the assumptions of Proposition \ref{LxW} we obtain
\begin{equation*}
\begin{split}
 \int_{\R^d}\psi(\bx,t) \; \big ( \rho^{n+1}\bu^{n+1}&-\rho^{n}\bu^{n}\big ) \; d\bx+\sum_K \psi_K^n \Bigg [ \Delta t_n\int_{\partial K} \bbf^\bm \big (U^n\big )\cdot \bn \; d\gamma
 \Bigg ]\\
&\qquad \qquad \qquad+ \Delta t_n 
\sum_{K}\bigg ( {F}_K(\bu^{n})+\sum_{\sigma_\mathcal{V}\in K}D_{\sigma_\mathcal{V}}(\bu^{n})\bigg )=0.
\end{split}
\end{equation*}
From \eqref{master:2}, we see that 
\begin{equation*}
\begin{split}
F_K(\bu^{n})&=\sum_{\sigma_\mathcal{V}\in K }\big ( {\psi}_{\sigma_\mathcal{V}}^n-\psi_K^n\big )\omega^{\rho,n+1,K}_{\sigma_\mathcal{V}} \Phi_{\sigma_\mathcal{V},K}^\bu,\\
D_{\sigma_\mathcal{V}}(\bu^{n})&=\sum_{K', \sigma_\mathcal{V}\in K'}\bigg [\sum_{K, \sigma_\mathcal{V}\in K\cap K'}\omega^{\rho,n+1,K}_{\sigma_\mathcal{V}}\big (\psi_K^n-\psi_{\sigma_\mathcal{V}}^n\big ) \; \Phi_{\sigma_\mathcal{V}, K'}^{\bu}\bigg ],
\end{split}
\end{equation*}
so that, since $\psi_K^n-\psi_{\sigma_\mathcal{V}}^n=O(h)$, using the estimates of Lemma \ref{weakBV}, we have
$$\lim\limits_{\Delta t_n, h\rightarrow 0}\Delta t_n\sum_{K}{F}_K(\bu^{n})=0 \quad\text{and}\quad \lim\limits_{\Delta t_n, h\rightarrow 0}\Delta t_n\sum_{\sigma_\mathcal{V}\in K}D_{\sigma_\mathcal{V}}(\bu^{n})=0$$
because the mesh is shape regular and $\nicefrac{\Delta t_n}{h}$ is bounded. Last, using the same technique as in \cite{AbgrallRoe}, and due again to the Lemma \ref{weakBV}, we see that
$$\lim\limits_{\Delta t_n, h\rightarrow 0}\sum_K \psi_K^n 
 \Delta t_n\int_{\partial K} \bbf^\bm \big (U^n\big )\cdot \bn \; d\gamma=\int_{\R^+}\int_{\R^d}\nabla_\bx \psi(\bx,t)\bbf^m(U)\; d\bx.$$
The convergence result for the energy is done with exactly the same method which then finishes the proof of Proposition \ref{LxW}.\end{proof}
}

\bibliographystyle{unsrt}
\bibliography{paper}
\end{document}